\begin{document}

\title{Convergence  rates  of  spectral orthogonal projection approximation for functions of
algebraic and  logarithmatic regularities\thanks{This work was supported partly by NSF of China (No.11771454).}}


\author{Shuhuang Xiang}


\institute{Shuhuang Xiang \at
              School of Mathematics and Statistics, Central South University, Changsha, Hunan 410083, P. R. China,
               \email{fxiangsh@mail.csu.edu.cn}           
}

\date{Received: date / Accepted: date}

\maketitle

\begin{abstract}
Based on the Hilb type formula between Jacobi polynomials and Bessel functions,  optimal decay rates on  Jacobi expansion coefficients  are  derived, by applying van der Corput type lemmas, for functions of logarithmatic singularities, which leads to  the optimal convergence rates on the Jacobi, Gegenbauer and Chebyshev orthogonal projections. 
It is interesting to see that for  boundary singularities, one may get faster convergence rate on the Jacobi or  Gegenbauer  projection as  
$(\alpha,\beta)$  and  $\lambda$ increases. The larger values of 
parameter, the higher  convergence rates can be achieved. In particular, the Legendre projection has one half order higher than Chebyshev. Moreover, if $\min\{\alpha,\beta\}>0$ and  $\lambda>\frac{1}{2}$,  
the Jacobi and Gegenbauer  orthogonal projections have higher convergence orders  compared with Legendre. 
While for interior singularity, the convergence order is independent of $(\alpha,\beta)$ and  $\lambda$.

\keywords{ asymptotic, coefficient,  convergence rate, Gegenbauer polynomial, Jacobi polynomial, orthogonal expansion, truncated spectral expansion.
}
\end{abstract}

\section{Introduction}

The $p$ and $hp$ versions of the finite element methods, or spectral and spectral-element methods  have attracted large interest both in theory and computational practice. 
To deal with corner singularities $(1\pm x)^{\gamma}\ln^{\mu}(1\pm x)$ ($\mu$ a nonnegative integer),  non-uniformly Jacobi-weighted Sobolev spaces,  $H^{m,\beta}(\Omega)$  with integer $m\ge 0$ and $\beta>-1$,
are introduced  to instead of  the standard weighted Sobolev space $H^m_{w}(\Omega)$  with  $w(x)$  a weight on $\Omega$ (\cite{BaGuo1,BaGuo2,BaGuo3,GSW,GW,Hesthaven,LWL,STW}), 
which is applied to  estimate the orthogonal projection
$$
\|f-\mathcal{P}_{N}^{f}\|_{W}\le  \rho(N)\|f\|_{H^{m,\beta}(\Omega)},
$$
where $\mathcal{P}_{N}^{f}(x)$ is the truncated polynomial of the Jacobi (Gegenbauer or Chebyshev)  expansion of $f(x)$, $W$ is a related Sobolev or Besov
space,  $H^{m,\beta}(\Omega)$ defined as a closure of $C^{\infty}$-functions endowed with the weighted norm. The convergence rate $\rho(N)$ depends on the regularity exponentials of  $H^{m,\beta}(\Omega)$  \cite{LWL}.

It is worth noticing that even for $H^{m,\beta}(\Omega)$ with $\Omega=(-1,1)$ and weighted norm  
\begin{equation}
\|u\|_{H^{m,\beta}(\Omega)}=\left\{\sum_{j=0}^m\int_{-1}^1(1-x^2)^{\beta+j}[u^{(j)}(x)]^2dx\right\}^{\frac{1}{2}},
\end{equation}
it could not lead to optimal order  
for $(1+x)^{\gamma}$-type singular functions with non-integer $\alpha>0$ (see \cite[p. 474]{CCS} and  \cite{LWL}). Indeed, for $f(x)=(1+x)^{\gamma}\in  H^{m,-\frac{1}{2}}(\Omega)$ with $m<2\gamma+\frac{1}{2}$,
the Chebyshev approximation $\|f-\mathcal{P}_{N}^{f}\|_{L^2_w(\Omega)}$ with $w(x)=(1-x^2)^{-\frac{1}{2}}$ loss
 an
order of the fractional part of $2\gamma+\frac{1}{2}$, or one order when $2\gamma =k  + \frac{1}{2} $ with  nonnegative integer $k$. 
For more details, see Liu, Wang and Li \cite{LWL}.

To overcome the above deficiency,  Liu, et al. \cite{LWL} introduced a new framework of fractional Sobolev-type
spaces: generalized Gegenbauer
functions of fractional degree (GGF-Fs). Under this framework, the optimal decay rate of Chebyshev expansion coefficients
for a large class of functions with interior and endpoint singularities are presented. In addition,  Hilb type estimates of GGF-Fs are derived in \cite{LW}.
However, for  interior and logarithmic  singularities, it yields
$$
|x-\theta|^s\ln|x-\theta|\in  W_{\theta}^{s+1-\varepsilon}(\Omega),\quad \forall \varepsilon\in (0,1)$$
for $\theta\in \Omega=(-1,1)$, and the Chebyshev expansion coefficients  $c_n$  satisfy
$$
|c_n|=O(n^{s+1-\varepsilon})
$$
(see \cite{LWL}). But the estimate  is still suboptimal. How to modify
the fractional space to best characterize this type singularity  appears
non-trivial and is still open  \cite[Remark 4.5]{LWL}.

For common Gegenbauer expansion, Wang \cite{Wang1}  proposed an alternative derivation of the contour integral representation. With this representation,  optimal
estimates for the Gegenbauer expansion coefficients are derived for analytic functions or functions with endpoint algebraic singularities. However, a precise result on the
asymptotic behavior of  interior singularities or endpoint logarithmic  singularities is still open \cite{Wang1}.

It is of particular  interest to see that for the endpoint singularity $(1\pm x)^{\gamma}\ln^{\mu}(1\pm x)g(x)$ with $g\in C^{\infty}[-1,1]$,   the Jacobi or  Gegenbauer  orthogonal projection can achieve faster convergence rate as  
$(\alpha,\beta)$  and  $\lambda$ increases. The larger values of 
$(\alpha,\beta)$  and  $\lambda$, the higher  convergence rates can be  obtained.
In particular, if $\min\{\alpha,\beta\}>-\frac{1}{2}$ and  $\lambda>0$,  the Jacobi and Gegenbauer  orthogonal projections have higher convergence orders than Chebyshev. While for interior singularity $|x-z_0|^s\ln^{\mu}|x-z_0|g(x)$ ($z_{0}\in(-1,1)$), the convergence order is independent of $(\alpha,\beta)$ and  $\lambda$:
For the Jacobi expansion, it follows that

\begin{equation}\label{JNerror1}
  \|f-\mathcal{P}_{N}^{f}\|_{H^{m,\alpha,\beta}(\Omega)}=\left\{\begin{array}{ll}
   {\cal O}(N^{m-\alpha-2\gamma-1}\ln^{\mu}(N)),& \mbox{$f(x)=(1-x)^{\gamma}\ln^{\mu}(1-x)g(x)$}\\
   {\cal O}(N^{m-s-1/2}\ln^{\mu}(N)),&\mbox{$f(x)=|x-z_0|^s\ln^{\mu}|z-z_0|g(x)$}\\
   {\cal O}(N^{m-\beta-2\delta-1}\ln^{\mu}(N)),& \mbox{$f(x)=(1+x)^{\delta}\ln^{\mu}(1+x)g(x)$}\end{array}\right.
\end{equation}
where $g\in C^{\infty}[-1,1]$,  $\mu$ is a nonnegative integer, $z_{0}\in(-1,1)$, $\min\{\alpha+\gamma,\beta+ \delta,\alpha+2\gamma,\beta+ 2\delta\}>m-1$ for the boundary singularities, 
$s>m -\frac{1}{2}$ and $\min\{\alpha,\beta\}\ge  -\frac{1}{2}$ for  the interior singularity, and
\begin{equation}\label{JN}
\|u\|_{H^{m,\alpha,\beta}(\Omega)}=\left\{\sum_{j=0}^m\int_{-1}^1(1-x)^{\alpha+j}(1+x)^{\beta+j}[u^{(j)}(x)]^2dx\right\}^{\frac{1}{2}}.
\end{equation}
Furthermore, if $\gamma,\, \delta$ are integers and $\mu$ is a positive integer, then 
\begin{equation}\label{JNerror2}
  \|f-\mathcal{P}_{N}^f\|_{H^{m,\alpha,\beta}(\Omega)}=\left\{\begin{array}{ll}
    {\cal O}(N^{m-\alpha-2\gamma-1}\ln^{\mu-1}(N)),& \mbox{$f(x)=(1-x)^{\gamma}\ln^{\mu}(1-x)g(x)$}\\
{\cal O}(N^{m-\beta-2\delta-1}\ln^{\mu-1}(N)),& \mbox{$f(x)=(1+x)^{\delta}\ln^{\mu}(1+x)g(x)$}.\end{array}\right.
\end{equation}
In addition,  the above optimal estimates on the decay rates of the Jacobi expansion can easily lead to 
the  optimal estimates on Gegenbauer expansions for $\alpha=\beta=\lambda-\frac{1}{2}$, and the Chebyshev expansion  for $\alpha=\beta=-\frac{1}{2}$.

These results and the optimal convergence rates can be deduced  from the optimal estimates   on  the decayed rates of the Jacobi expansion coefficients. To 
avoid the  deficiency of the above frameworks and get the optimal asymptotic orders on the expansion coefficients, we will apply van der Corput type lemmas on highly oscillatory Bessel transforms with a large frequency.

Note  that Jacobi polynomial $P_{n}^{(\alpha,\beta)}(x)$ is an oscillatory function around $x=0$, particularly, a highly oscillatory function as $n\gg 1$ (see {\sc Fig. 1.1}).
\begin{figure}[hpbt]\label{fig1}
\centerline{\includegraphics[height=3.5cm,width=4cm]{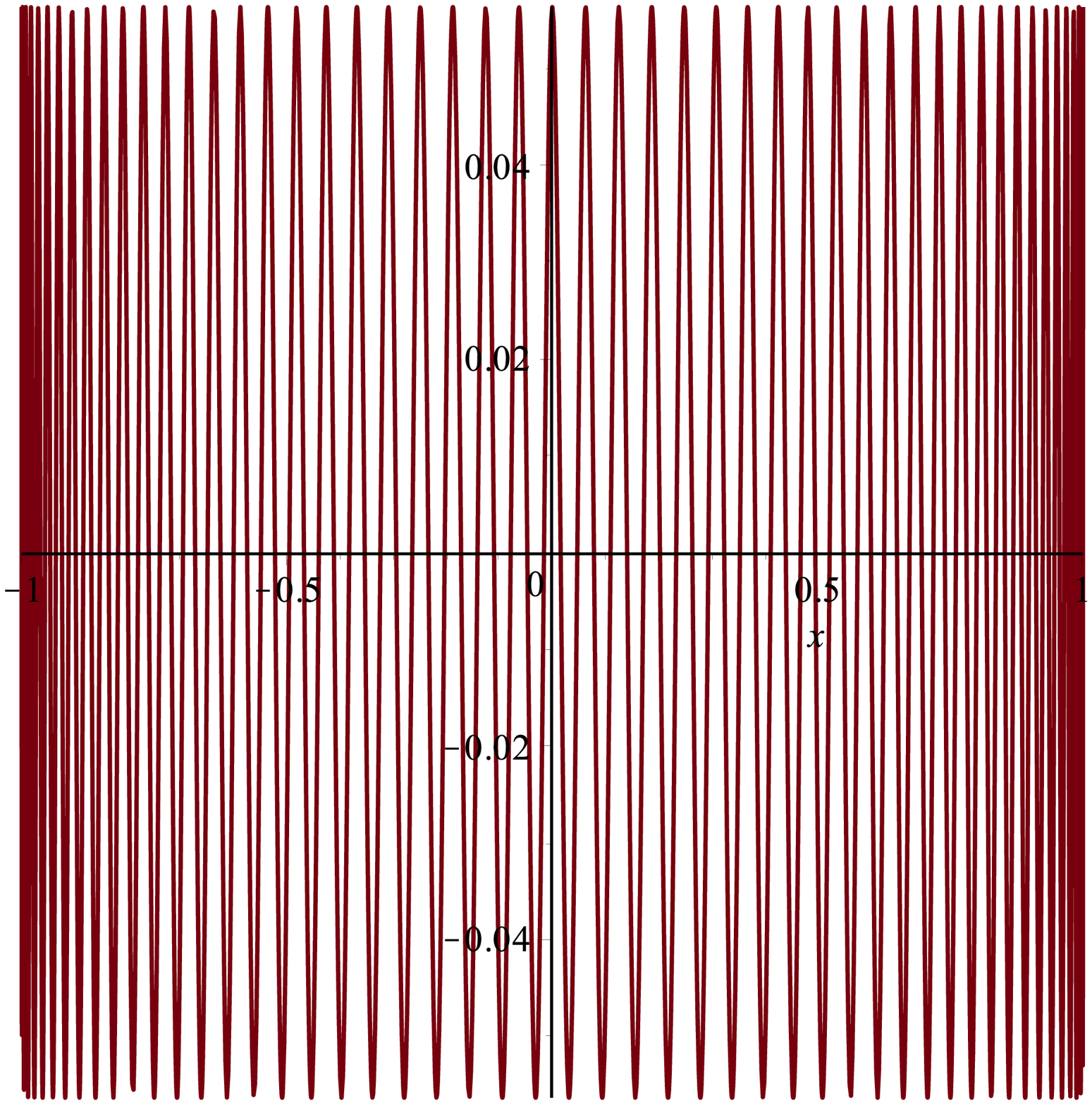}
            \includegraphics[height=3.5cm,width=4cm]{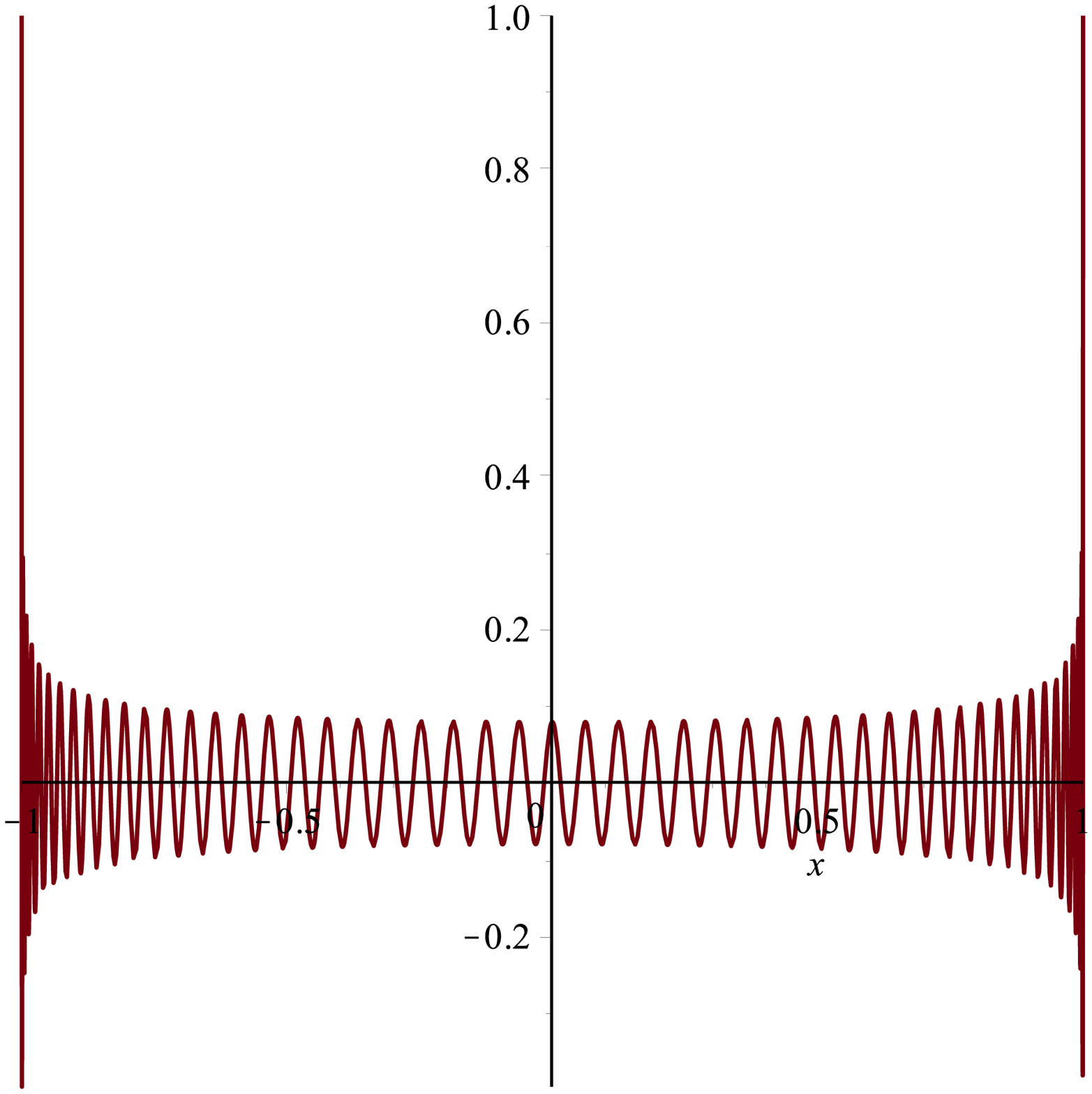}
            \includegraphics[height=3.5cm,width=4cm]{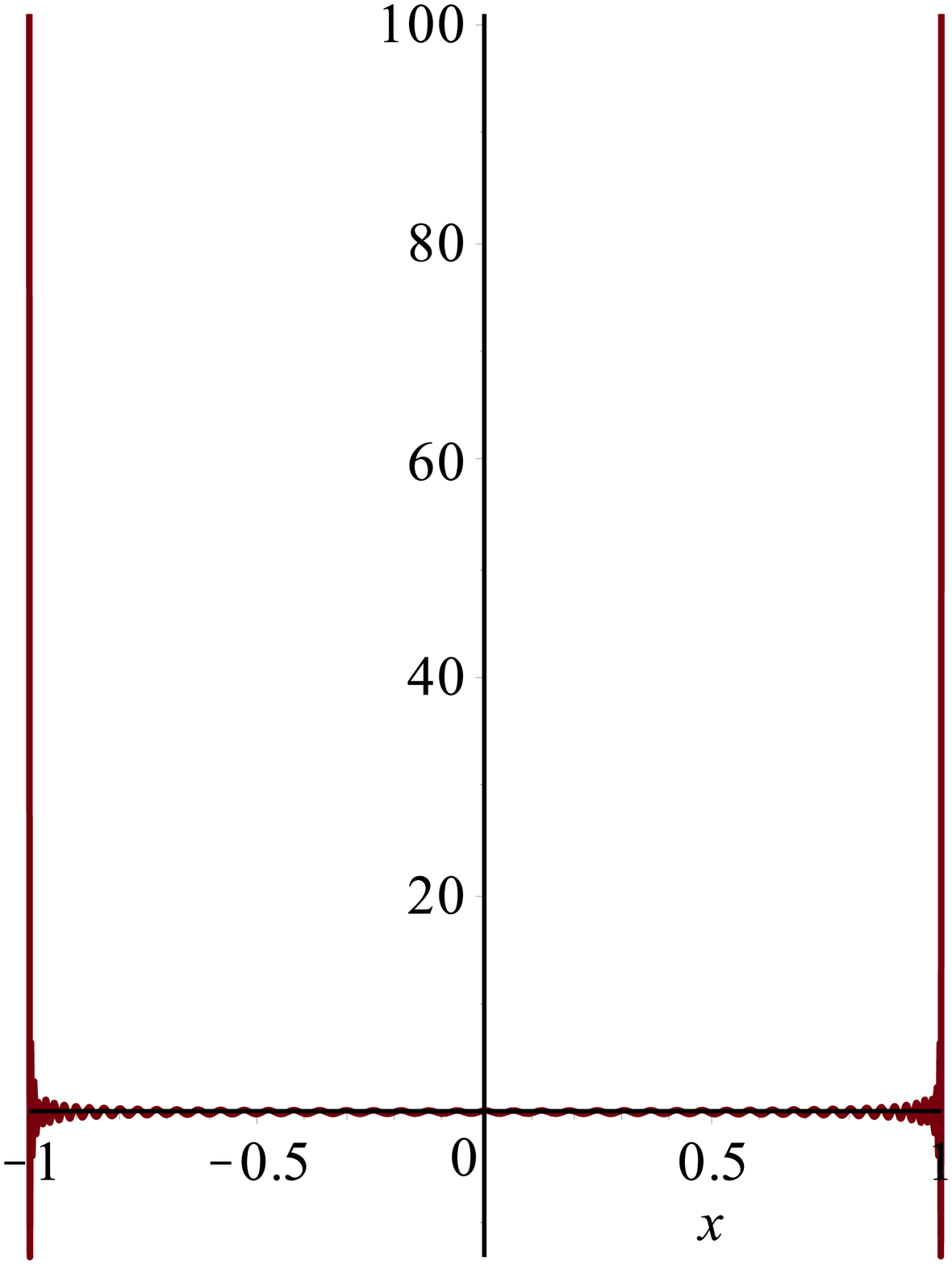}}
\caption{$P_{100}^{(-0.5,-0.5)}(x)$ (left), $P_{100}^{(0,0)}(x)$ (middle), and $P_{100}^{(1,1)}(x)$ (right), respectively.}
\end{figure}
The  Hilb type formula  given in Darboux \cite{Darboux} and Szeg\"{o}  \cite[Theorem 8.21.12]{Szego}
establishes the relation between the Jacobi polynomial and a highly oscillatory Bessel function with a larger frequency when the degree of the Jacobi polynomial  increases.

The paper is organized as follows. In Section 2, we first present the relationship among  the Jacobi,   Gegenbauer and Chebyshev expansion coefficients, 
and derive van der Corput type lemmas for Bessel transforms.  Based on the Hilb type formula, the optimal decay rates on the expansion coefficients for logarithmatic singularities are presented in Section 3, and  the convergence rates on the spectral  orthogonal projections are given in Section 4. Final remarks are included in Section 5.  

\section{Prelimenaries}
Assume $f(x)$ is a suitably smooth function on $[-1,1]$. 
Consider the continuous polynomial expansion
\begin{equation}\label{eq:jacexpan}
  f(x)=\sum_{n=0}^{\infty}a_n(\alpha,\beta)P_n^{(\alpha,\beta)}(x), \quad\quad\alpha,\beta>-1
\end{equation}
with the expansion coefficients
\begin{equation}\label{eq:jacexpcoeffs}
  a_n(\alpha,\beta)=\frac{1}{\sigma_{n}^{\alpha,\beta}}\int_{-1}^{1}\!(1-x)^{\alpha}(1+x)^{\beta}f(x)P_n^{(\alpha,\beta)}(x)\,\mathrm{d}x,
\end{equation}
where $P_n^{(\alpha,\beta)}(x)$ is the Jacobi polynomial of degree $n$ and
\begin{equation}\label{eq:jacsigma}
  \sigma_{n}^{\alpha,\beta}=2^{\alpha+\beta+1}\frac{\Gamma(n+\alpha+1)\Gamma(n+\beta+1)}{n!(2n+\alpha+\beta+1)\Gamma(n+\alpha+\beta+1)}
\end{equation}
(see \cite[p. 774]{Abram}), which leads to the error bound on the orthogonal projection
\begin{equation}\label{truerr}
\|f-\mathcal{P}_{N}^{f}\|_{L^2_w[-1,1]}= \sqrt{\sum_{n=N+1}^{\infty}a_{n}^{2}(\alpha,\beta)\sigma_{n}^{\alpha,\beta}}
\end{equation}
and  implies that  the convergence  is only on the decay of the expansion coefficients. 

In particular,  the optimal estimates on the decay rates of the Jacobi expansion can easily lead to 
the  optimal estimates on Gegenbauer expansion, then derives the estimates on Chebyshev expansion.

The Gegenbauer expansion is
\begin{equation}\label{eq:gegexpan}
  f(x)=\sum_{n=0}^{\infty}a_n(\lambda)C_n^{(\lambda)}(x),\quad\quad\lambda>-\frac{1}{2}, \quad \lambda\not=0
\end{equation}
with the expansion coefficients
$$
  a_n(\lambda)=\frac{1}{\hbar_{n}}\int_{-1}^{1}\!(1-x^2)^{\lambda-\frac{1}{2}}C_n^{(\lambda)}(x)f(x)\,\mathrm{d}x,\quad\hbar_{n}=\frac{2^{1-2\lambda}\pi}{\Gamma^{2}(\lambda)}\frac{\Gamma(n+2\lambda)}{n!(n+\lambda)}
$$
(see \cite[p. 79]{Hesthaven}), and $C_n^{(\lambda)}(x)$ is a Gegenbauer  polynomial of degree $n$
\begin{equation}\label{GenCoe}
  C_n^{(\lambda)}(x)=\frac{\Gamma(\lambda+\frac{1}{2})\Gamma(n+2\lambda)}{\Gamma(2\lambda)\Gamma(n+\lambda+\frac{1}{2})}P_n^{(\lambda-\frac{1}{2},\lambda-\frac{1}{2})}(x), \end{equation}
which is  related to
Legendre and Chebyshev polynomials  as follows \cite[p. 76]{Hesthaven}
\begin{equation}\label{eq:legecheb}
  P_n(x)=C_n^{(\frac{1}{2})}(x), \quad T_n(x)=n\lim_{\lambda\to0}\Gamma(2\lambda)C_n^{(\lambda)}(x).
\end{equation}

From  (\ref{eq:jacexpan})  and  (\ref{GenCoe})
it follows
$$ \begin{array}{lll}
  f(x)&=&{\displaystyle\sum_{n=0}^{\infty}a_{n}\left(\lambda-\frac{1}{2},\lambda-\frac{1}{2}\right)P_n^{(\lambda-\frac{1}{2},\lambda-\frac{1}{2})}(x)}\\
 & =&{\displaystyle \sum_{n=0}^{\infty}a_{n}\left(\lambda-\frac{1}{2},\lambda-\frac{1}{2}\right)\frac{\Gamma(2\lambda)\Gamma(n+\lambda+\frac{1}{2})}{\Gamma(\lambda+\frac{1}{2})\Gamma(n+2\lambda)}C_n^{(\lambda)}(x)}\\
&=&{\displaystyle  \sum_{n=0}^{\infty}a_{n}(\lambda)C_n^{(\lambda)}(x)}\end{array}
  $$
which  derives
\begin{equation}\label{eq:gegcoeffs}
a_{n}(\lambda)=a_{n}\left(\lambda-\frac{1}{2},\lambda-\frac{1}{2}\right)\frac{\Gamma(2\lambda)\Gamma(n+\lambda+\frac{1}{2})}{\Gamma(\lambda+\frac{1}{2})\Gamma(n+2\lambda)}.
\end{equation}

Moreover, note that
$$
  f(x)=\sum_{n=0}^{\infty}a_{n}(\lambda)C_n^{(\lambda)}(x)=\sum_{n=0}^{\infty}\frac{a_{n}(\lambda)}{n\Gamma(2\lambda)}n\Gamma(2\lambda)C_n^{(\lambda)}(x),
$$
which, together with  
\begin{equation}\label{eq:chebexpan}
   f(x)=\sum_{n=0}^{\infty}c_{n}T_n(x)
\end{equation}
and  (\ref{eq:legecheb})-(\ref{eq:gegcoeffs}), derives $c_{0}=a_{n}\left(-\frac{1}{2},-\frac{1}{2}\right)$ and for $n\ge 1$
\begin{equation}\label{eq:checoeffs}\begin{array}{lll}
c_{n}=\displaystyle \lim_{\lambda\to0}\frac{a_{n}(\lambda)}{n\Gamma(2\lambda)}
&=&\displaystyle \lim_{\lambda\to0}\frac{a_{n}\left(\lambda-\frac{1}{2},\lambda-\frac{1}{2}\right)\frac{\Gamma(2\lambda)\Gamma(n+\lambda+\frac{1}{2})}{\Gamma(\lambda+\frac{1}{2})\Gamma(n+2\lambda)}}{n\Gamma(2\lambda)}\\
&=&\displaystyle \frac{\Gamma(n+\frac{1}{2})}{n\Gamma(\frac{1}{2})\Gamma(n)}a_{n}\left(-\frac{1}{2},-\frac{1}{2}\right).
\end{array}
\end{equation}

\vspace{1cm}

The following asymptotic Hilb type formula  for Jacobi polynomials, related to a highly oscillatory Bessel function  with a large frequency, is introduced in Darboux \cite{Darboux} and Szeg\"{o}  \cite[Theorem 8.21.12]{Szego}.
\begin{lemma}\label{lem:lemma1}
  (\cite{Darboux,Szego}) Let $\alpha,\beta>-1$, then as $n\to\infty$
  \begin{equation} \begin{array}{lll}
    & &{\displaystyle
     \theta^{-\frac{1}{2}}\sin^{\alpha+\frac{1}{2}}\left(\frac{\theta}{2}\right)\cos^{\beta+\frac{1}{2}}
     \left(\frac{\theta}{2}\right)P_n^{(\alpha,\beta)}(\cos\theta)}\\
    &=&{\displaystyle\frac{\Gamma(n+\alpha+1)}{\sqrt{2}n!{\tilde N}^{\alpha}}J_{\alpha}({\tilde N}\theta)
     +\left\{\begin{array}{ll}
     \theta^{\frac{1}{2}}{\cal O}\left({\tilde N}^{-\frac{3}{2}}\right),& cn^{-1}\le \theta\le \pi-\epsilon\\
     \theta^{\alpha+2}{\cal O}\left({\tilde N}^{\alpha}\right),& 0< \theta\le cn^{-1},\end{array}\right.}\end{array}
  \end{equation}
  where ${\tilde N}=n+(\alpha+\beta+1)/2$, $c$ and $\epsilon$ are fixed positive numbers, and $J_{\alpha}(z)$ is the first kind of Bessel function of order $\alpha$. The constants in the ${\cal O}$-terms depend on $\alpha$, $\beta$, $c$, and $\epsilon$.
\end{lemma}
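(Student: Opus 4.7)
The plan is to carry out the classical Liouville--Stekloff reduction of the Jacobi equation to a perturbed Bessel equation, and then control the perturbation by variation of parameters, matching the normalization at the endpoint $\theta=0$.

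First I would perform the Liouville substitution
\[
v(\theta)=\sin^{\alpha+\frac12}\!\left(\tfrac{\theta}{2}\right)\cos^{\beta+\frac12}\!\left(\tfrac{\theta}{2}\right)P_n^{(\alpha,\beta)}(\cos\theta).
\]
A direct calculation starting from the Jacobi ODE
\[
(1-x^{2})y''+(\beta-\alpha-(\alpha+\beta+2)x)y'+n(n+\alpha+\beta+1)y=0
\]
with $x=\cos\theta$ eliminates the first derivative and produces the normal form
\[
v''(\theta)+\bigl[\tilde N^{2}+q(\theta)\bigr]v(\theta)=0,\qquad q(\theta)=\frac{\tfrac14-\alpha^{2}}{4\sin^{2}(\theta/2)}+\frac{\tfrac14-\beta^{2}}{4\cos^{2}(\theta/2)},
\]
with $\tilde N=n+(\alpha+\beta+1)/2$. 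Next, from Bessel's equation I record that $w(\theta)=\theta^{1/2}J_{\alpha}(\tilde N\theta)$ and $\tilde w(\theta)=\theta^{1/2}Y_{\alpha}(\tilde N\theta)$ satisfy
\[
u''(\theta)+\Bigl[\tilde N^{2}+\frac{\tfrac14-\alpha^{2}}{\theta^{2}}\Bigr]u(\theta)=0.
\]
Subtracting the two normal forms reveals the perturbation
\[
r(\theta):=q(\theta)-\frac{\tfrac14-\alpha^{2}}{\theta^{2}}=\Bigl(\tfrac14-\alpha^{2}\Bigr)\!\Bigl[\frac{1}{4\sin^{2}(\theta/2)}-\frac{1}{\theta^{2}}\Bigr]+\frac{\tfrac14-\beta^{2}}{4\cos^{2}(\theta/2)},
\]
whose two terms are bounded on $(0,\pi-\epsilon]$ and behave like $O(1)$ near $\theta=0$ (the apparent singularities cancel in the first bracket).

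Second, I would use variation of parameters with the Bessel pair $(w,\tilde w)$, whose Wronskian is a known constant multiple of $\tilde N/\pi$. Writing
\[
v(\theta)=A\,w(\theta)+B\,\tilde w(\theta)+\frac{\pi}{\tilde N}\int_{0}^{\theta}\!\!\bigl[w(\theta)\tilde w(t)-\tilde w(\theta)w(t)\bigr]r(t)\,v(t)\,dt,
\]
I would fix $A,B$ so that the correct boundary behavior at $\theta\to 0^{+}$ is attained. Since $\tilde w$ blows up like $\theta^{1/2-\alpha}$ while $v(\theta)\sim(\theta/2)^{\alpha+1/2}P_{n}^{(\alpha,\beta)}(1)$, one must take $B=0$, and comparing with $J_{\alpha}(z)\sim(z/2)^{\alpha}/\Gamma(\alpha+1)$ together with $P_{n}^{(\alpha,\beta)}(1)=\Gamma(n+\alpha+1)/(n!\,\Gamma(\alpha+1))$ forces
\[
A=\frac{\Gamma(n+\alpha+1)}{\sqrt{2}\,n!\,\tilde N^{\alpha}}.
\]
Finally I would estimate the remainder by iterating the integral equation (Gronwall-style), using the classical bounds $|J_{\alpha}(z)|+|Y_{\alpha}(z)|\lesssim z^{-1/2}$ for $z\ge 1$ and $|J_{\alpha}(z)|\lesssim z^{\alpha}$, $|Y_{\alpha}(z)|\lesssim z^{-\alpha}$ for $0<z<1$. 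In the region $cn^{-1}\le\theta\le\pi-\epsilon$ the Bessel argument is large, the kernel is $O(\tilde N^{-1})$, and the boundedness of $r$ delivers the error $\theta^{1/2}O(\tilde N^{-3/2})$. In the region $0<\theta\le cn^{-1}$ one uses the small-argument asymptotics for $J_{\alpha},Y_{\alpha}$, which after multiplication by $A$ gives the second error $\theta^{\alpha+2}O(\tilde N^{\alpha})$.

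The main obstacle is the book-keeping in the variation-of-parameters integral near $\theta=0$: the kernel $w(\theta)\tilde w(t)-\tilde w(\theta)w(t)$ is the difference of two separately singular quantities, and to obtain the stated sharp exponents one must track the exact cancellations in $r(\theta)$ (the fact that $[4\sin^{2}(\theta/2)]^{-1}-\theta^{-2}$ is analytic at $0$ is essential) and combine them with the correct asymptotic regime of the Bessel functions. Dividing the integration range at $t\sim n^{-1}$ and handling the two sub-integrals separately, together with the above uniform bounds on $J_{\alpha},Y_{\alpha}$, converts this into a routine but somewhat lengthy estimate; the remaining normalization constants are then pinned down by the endpoint matching already described.
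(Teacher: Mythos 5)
The paper does not actually prove this lemma: it is quoted from Szeg\H{o} (Theorem 8.21.12, going back to Darboux), so there is no internal proof to compare against. Your sketch reconstructs the standard proof from the cited source --- Liouville normal form of the Jacobi equation, comparison with the Bessel equation satisfied by $\theta^{1/2}J_{\alpha}(\tilde N\theta)$, variation of parameters (the Liouville--Stekloff method), endpoint matching at $\theta=0$, and a Gronwall-type iteration split at $\theta\sim n^{-1}$. That is exactly the classical route, and the normal form, the cancellation in $[4\sin^{2}(\theta/2)]^{-1}-\theta^{-2}$, and the value of $A$ obtained from $P_{n}^{(\alpha,\beta)}(1)=\Gamma(n+\alpha+1)/(n!\,\Gamma(\alpha+1))$ are all correct.

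Two concrete repairs are needed. First, the Wronskian of the pair $w(\theta)=\theta^{1/2}J_{\alpha}(\tilde N\theta)$, $\tilde w(\theta)=\theta^{1/2}Y_{\alpha}(\tilde N\theta)$ is the \emph{constant} $2/\pi$ (the factor $\tilde N$ from differentiating the Bessel argument cancels against $W[J_{\alpha},Y_{\alpha}](z)=2/(\pi z)$ evaluated at $z=\tilde N\theta$), not a multiple of $\tilde N/\pi$; the prefactor of your integral must be $\pi/2$, not $\pi/\tilde N$. This is not cosmetic: with your factor the remainder would appear to gain an extra power of $\tilde N^{-1}$, and the error analysis only produces the stated $\theta^{1/2}{\cal O}(\tilde N^{-3/2})$ after the correct constant is used together with the bootstrap bound $|v(\theta)|\lesssim\theta^{1/2}\tilde N^{-1/2}$ in the oscillatory range. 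Second, the determination $B=0$ from ``$\tilde w$ blows up like $\theta^{1/2-\alpha}$'' is only immediate for $\alpha>0$: for $-1<\alpha\le 0$ the function $\theta^{1/2}Y_{\alpha}(\tilde N\theta)$ is itself ${\cal O}(\theta^{\alpha+1/2})$ (or carries a logarithm at $\alpha=0$), so leading-order matching does not separate $A$ from $B$; one must match at the subleading order $\theta^{-\alpha+1/2}$ (absent from the expansion of $v$), or use the fundamental pair $J_{\alpha},J_{-\alpha}$ for noninteger $\alpha$. With these two fixes your argument is the standard and correct proof of the lemma.
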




 \begin{lemma} 
    Suppose  $\alpha+\nu>0$, $\beta>-1$, $b>0$ and $\mu$ is a nonnegative integer,  then it is satisfied for  $x\in [0,b]$   that  
      \begin{equation}\label{lemma22}
      \quad \Big|\ln^{\mu}(\omega x)(\omega x)^{\alpha}J_{\nu}(\omega x)\Big|=\left\{\begin{array}{ll}
        {\cal O}\left((\ln^{\mu}(\omega))\omega^{\alpha-\frac{1}{2}}\right),&\alpha\ge  \frac{1}{2}\\
         {\cal O}(1),&\alpha< \frac{1}{2},\end{array}\right.\quad \omega \gg 1.
       \end{equation}
   \end{lemma}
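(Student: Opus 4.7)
The plan is to reduce the claim to the classical small-argument and large-argument asymptotics of $J_\nu$, applied on two complementary sub-intervals of $x\in[0,b]$ determined by whether $\omega x\le 1$ or $\omega x\ge 1$. Throughout I write $z=\omega x$, so $z$ ranges over $[0,\omega b]$, and the problem splits into a ``near-zero'' piece $[0,1]$ (which is essentially $x\lesssim 1/\omega$) and a ``far'' piece $[1,\omega b]$.

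On the small-argument range $z\in(0,1]$, I would invoke the series expansion $J_\nu(z)=z^\nu/(2^\nu\Gamma(\nu+1))+O(z^{\nu+2})$ to obtain $|(\omega x)^\alpha J_\nu(\omega x)|\le C(\omega x)^{\alpha+\nu}$. Since the hypothesis $\alpha+\nu>0$ guarantees that the map $t\mapsto t^{\alpha+\nu}|\ln t|^\mu$ is bounded on $(0,1]$ (the algebraic factor kills the logarithmic blow-up at $0$), it follows that $|\ln^\mu(\omega x)(\omega x)^\alpha J_\nu(\omega x)|=O(1)$ on this range, uniformly in $\omega$.

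On the far range $z\in[1,\omega b]$, the Hankel-type asymptotic $J_\nu(z)=\sqrt{2/(\pi z)}\cos(z-\nu\pi/2-\pi/4)+O(z^{-3/2})$ yields $|(\omega x)^\alpha J_\nu(\omega x)|\le C(\omega x)^{\alpha-1/2}$, so the whole expression is dominated by $C\,t^{\alpha-1/2}(\ln t)^\mu$ with $t\in[1,\omega b]$. If $\alpha\ge 1/2$ the factor $t^{\alpha-1/2}$ grows, and together with $(\ln t)^\mu$ the supremum is attained (for $\omega$ large) at the right endpoint $t=\omega b$, producing the bound $C(\ln(\omega b))^\mu(\omega b)^{\alpha-1/2}=O((\ln\omega)^\mu\omega^{\alpha-1/2})$. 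If $\alpha<1/2$, differentiating $t^{\alpha-1/2}(\ln t)^\mu$ gives a unique critical point $t^{\ast}=\exp(\mu/(1/2-\alpha))$ on $[1,\infty)$, which is a constant depending only on $\alpha$ and $\mu$; the maximum value $e^{-\mu}(\mu/(1/2-\alpha))^\mu$ is therefore independent of $\omega$, so this contribution is $O(1)$.

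Combining the two pieces gives the stated dichotomy: when $\alpha\ge 1/2$ the $O(1)$ contribution from the near-zero range is absorbed into $O((\ln\omega)^\mu\omega^{\alpha-1/2})$ since $\omega^{\alpha-1/2}\ge 1$, while when $\alpha<1/2$ both ranges contribute $O(1)$. The only nontrivial step is the calculus maximization in the case $\alpha<1/2$, which is what prevents the log factor from inflating the bound to $(\ln\omega)^\mu$; I anticipate this as the main (though still elementary) obstacle, and everything else is a direct substitution of standard Bessel asymptotics.
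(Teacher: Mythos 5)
Your proposal is correct and follows essentially the same route as the paper: both split at a fixed threshold in $z=\omega x$, use the power-series behaviour $J_\nu(z)=O(z^\nu)$ together with $\alpha+\nu>0$ to bound the near-zero piece, and use the $O(z^{-1/2})$ large-argument decay of $J_\nu$ on the far piece. Your explicit maximization of $t^{\alpha-1/2}(\ln t)^\mu$ in the case $\alpha<\tfrac12$ merely spells out a step the paper asserts without detail.
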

\begin{proof}
Define $F_1(z)=\ln^{\mu}(z) z^{\alpha}J_{\nu}(z)$ for $z\in [0,+\infty)$, where $F_1(0)$ is  defined by its limit as $z$ tends to $0$.
From the definition of $J_{\nu}$ (Abramowitz and Stegun
\cite[Eq. 9.1.10]{Abram})
 \begin{equation}\label{lemma133}
J_{\nu}(z)=\left(\frac{z}{2}\right)^{\nu}{\displaystyle\sum_{n=0}^{\infty}\frac{\left(-\frac{1}{4}z^2\right)^n}{n!\Gamma(\nu+n+1)}},
 \end{equation}
we see that  $F_1(0)=0$, $F_1$ is continuous for $z\in [0,+\infty)$. In addition, from \cite[Eq. (9.1.30), Eq. (9.2.1)]{Abram} and  \cite[p. 199]{Watson}
$$
J_{\nu}(z)=\sqrt{\frac{2}{\pi z}}\cos(z-\frac{1}{2}\nu\pi-\frac{\pi}{4})+{\cal O}(z^{-\frac{3}{2}}),\quad z \rightarrow +\infty,
$$
there exists a $z_0\ge 1$ such that for $z\ge z_0$, $|J_{\nu}(z)|\le C_1z^{-\frac{1}{2}}$ for some positive constant $C_1$ independent of $z$, which implies for $\omega x\ge z_0$
$$
|F_1(\omega x)|\le C_1 \ln^{\mu}(\omega x) (\omega x)^{\alpha-\frac{1}{2}},\quad  \alpha\ge \frac{1}{2};\quad 
 |F_1(\omega x)|={\cal O}(1),\quad  \alpha< \frac{1}{2},
$$
then (\ref{lemma22}) is satisfied for  $\omega x\ge z_0$. Notice that $F_1(z)$ is uniformly bounded on $[0,z_0]$, which yields for $\omega x< z_0$
$$
 |F_1(\omega x)|={\cal O}(1).
$$
These together  complete the proof.
\end{proof}

 \begin{lemma} (van der Corput lemma for Bessel transform I)
    Suppose  $\alpha+\nu>-1$, $\beta>-1$, $b>0$, $\mu$ is a nonnegative integer,  $\psi\in
C[0,b]$ and $\psi'\in
L^1[0,b]$, then it is satisfied for  $\omega \gg 1$,         
     \begin{equation}\label{lemma31}
        \int_{0}^{t}\!\ln^{\mu}(x)x^{\alpha}J_{\nu}(\omega x)\,\mathrm{d}x=\left\{\begin{array}{ll}
        {\cal O}\left(\frac{\ln^{\mu}(\omega)}{\omega^{\alpha+1}}\right),&\alpha\le \frac{1}{2}\\
         {\cal O}\left(\omega^{-\frac{3}{2}}\right),&\alpha> \frac{1}{2}\end{array}\right.
      \end{equation}
and
 \begin{equation}\label{lemma32}
        \int_{0}^{t}\!\ln^{\mu}(x)x^{\alpha}\psi(x)J_{\nu}(\omega x)\,\mathrm{d}x=\left\{\begin{array}{ll}
        {\cal O}\left(\frac{\ln^{\mu}(\omega)}{\omega^{\alpha+1}}\right),&\alpha\le  \frac{1}{2}\\
         {\cal O}\left(\omega^{-\frac{3}{2}}\right),&\alpha> \frac{1}{2}\end{array}\right.
      \end{equation}
uniformly  for   $t\in [0,b]$. 
\end{lemma}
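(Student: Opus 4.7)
The plan is to first reduce (\ref{lemma32}) to (\ref{lemma31}), and then to establish (\ref{lemma31}) via a single integration by parts based on the standard Bessel identity $\frac{d}{dx}[x^{\nu+1} J_{\nu+1}(\omega x)] = \omega\, x^{\nu+1} J_\nu(\omega x)$, combined with a split of the interval $[0,t]$ at $x = 1/\omega$ to control separately the small- and large-argument regimes of the resulting Bessel factor.

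For the reduction of (\ref{lemma32}) to (\ref{lemma31}), I will write $\psi(x) = \psi(0) + \int_0^x \psi'(s)\,ds$ and interchange the order of integration. The $\psi(0)$-part is already covered by (\ref{lemma31}), while the $\psi'$-part becomes $\int_0^t \psi'(s)[I(t,\omega) - I(s,\omega)]\,ds$, where $I(\tau,\omega) := \int_0^\tau \ln^\mu(x) x^\alpha J_\nu(\omega x)\,dx$. Since (\ref{lemma31}) provides a bound on $|I(\tau,\omega)|$ that is uniform in $\tau \in [0,b]$, and $\psi' \in L^1[0,b]$, the claimed rate for (\ref{lemma32}) follows at once with the same asymptotic order.

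For (\ref{lemma31}), I recast the integrand as $\omega^{-1}\ln^\mu(x) x^{\alpha-\nu-1} \frac{d}{dx}[x^{\nu+1} J_{\nu+1}(\omega x)]$ and integrate by parts. The boundary contribution at $x=0$ vanishes thanks to $\alpha + \nu > -1$ together with $J_{\nu+1}(\omega x) = {\cal O}((\omega x)^{\nu+1})$ near the origin (cf.\ (\ref{lemma133})), leaving only an endpoint value $\omega^{-1} \ln^\mu(t) t^\alpha J_{\nu+1}(\omega t)$ plus two residual integrals of the form $\omega^{-1} \int_0^t \ln^j(x) x^{\alpha-1} J_{\nu+1}(\omega x)\,dx$ with $j \in \{\mu-1, \mu\}$.

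To bound each of these uniformly in $t \in [0,b]$, I will split at $x = 1/\omega$: on $[0, 1/\omega]$ the series expansion (\ref{lemma133}) gives $|J_{\nu+1}(\omega x)| \le C(\omega x)^{\nu+1}$, while on $[1/\omega, t]$ the standard large-argument asymptotic (equivalently, Lemma 2 with $\alpha = 0$) gives $|J_{\nu+1}(\omega x)| \le C(\omega x)^{-1/2}$. Expanding $\ln^\mu(x) = (\ln(\omega x) - \ln \omega)^\mu$ by the binomial theorem to extract the powers of $\ln \omega$, a direct computation yields ${\cal O}(\ln^\mu(\omega)/\omega^{\alpha+1})$ when $\alpha \le \tfrac{1}{2}$ and ${\cal O}(\omega^{-3/2})$ when $\alpha > \tfrac{1}{2}$. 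I expect the main obstacle to be the regime $\alpha \le \tfrac{1}{2}$: the factor $x^{\alpha-3/2}$ arising from the outer region is not integrable near the origin, and the sharp rate hinges on the explicit primitive bound $\int_{1/\omega}^t |\ln^\mu(x)| x^{\alpha-3/2}\,dx = {\cal O}(\ln^\mu(\omega)\, \omega^{1/2-\alpha})$, after which the $\omega^{-1}\cdot \omega^{-1/2}$ prefactor delivers exactly the target decay, with the borderline case $\alpha = \tfrac{1}{2}$ requiring an additional care so that no extra logarithmic factor is incurred.
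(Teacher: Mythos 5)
Your route is genuinely different from the paper's and is viable for $\alpha\neq\frac{1}{2}$. The paper handles $\alpha<\frac{1}{2}$ by rescaling $u=\omega x$ and invoking the convergence of the improper integrals $\int_0^{\infty}\ln^{j}(u)u^{\alpha}J_{\nu}(u)\,\mathrm{d}u$ (via Abel's test), which exploits the oscillation of $J_{\nu}$; the cases $\alpha=\frac{1}{2}$ and $\alpha>\frac{1}{2}$ are then reduced to this by one integration by parts. You instead integrate by parts once and then bound everything by absolute values after splitting at $x=1/\omega$, using $|J_{\nu+1}(\omega x)|\lesssim(\omega x)^{\nu+1}$ on the inner region and $|J_{\nu+1}(\omega x)|\lesssim(\omega x)^{-1/2}$ on the outer one. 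This is more elementary (no improper-integral convergence or Abel criterion needed), and your checks of the boundary term at $0$ (using $\alpha+\nu+1>0$) and of the inner-region integral (using $\alpha+\nu>-1$) are correct; your Fubini reduction of (\ref{lemma32}) to (\ref{lemma31}) is equivalent to the paper's integration by parts against $\psi$ and is fine.

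The gap is the case $\alpha=\frac{1}{2}$, which you flag but do not resolve, and which your stated mechanism provably cannot deliver: after one integration by parts the residual is $\omega^{-1}\int_0^t\ln^{j}(x)x^{-1/2}J_{\nu+1}(\omega x)\,\mathrm{d}x$, and on the outer region your absolute-value bound gives $\omega^{-3/2}\int_{1/\omega}^{t}|\ln x|^{j}x^{-1}\,\mathrm{d}x={\cal O}\bigl(\omega^{-3/2}\ln^{j+1}(\omega)\bigr)$, i.e.\ one logarithm worse than the claimed ${\cal O}(\ln^{\mu}(\omega)\omega^{-3/2})$; equivalently, your ``explicit primitive bound'' $\int_{1/\omega}^{t}|\ln^{\mu}(x)|x^{\alpha-3/2}\,\mathrm{d}x={\cal O}(\ln^{\mu}(\omega)\omega^{1/2-\alpha})$ is false at $\alpha=\frac{1}{2}$. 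This borderline exponent is not a curiosity: it is exactly the case used later in Theorem 1, where $k_0$ is chosen so that the effective exponent can equal $\frac{1}{2}$. To close the gap you must exploit cancellation rather than absolute values there. Two concrete fixes: (i) integrate by parts a second time, so the outer-region weight becomes $x^{-2}$, whose primitive over $[1/\omega,b]$ is ${\cal O}(\omega)$ (a power, not a logarithm), after which your splitting recovers the sharp rate (one also needs $\nu>-\frac{3}{2}$ for the new boundary term, which follows from $\alpha+\nu>-1$ with $\alpha=\frac{1}{2}$); or (ii) apply the paper's rescaling argument to the exponent $-\frac{1}{2}<\frac{1}{2}$, using the uniform boundedness of $\int_0^{X}\ln^{j}(u)u^{-1/2}J_{\nu+1}(u)\,\mathrm{d}u$. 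Without one of these, the $\alpha=\frac{1}{2}$ branch of (\ref{lemma31}) is not established.
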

\begin{proof}
In the case $\alpha< \frac{1}{2}$: Let $u=\omega x$. It follows
\begin{equation}\label{eq1}
 \int_{0}^{t}\!\ln^{\mu}(x)x^{\alpha}J_{\nu}(\omega x)\,\mathrm{d}x=\frac{1}{\omega^{\alpha+1}}\int_0^{\omega t}
\sum_{j=0}^{\mu}(-1)^jC_{\mu}^j\ln^{\mu-j}(u)\ln^j(\omega)u^{\alpha}J_{\nu}(u)\,\mathrm{d}u
\end{equation}
where $C_{\mu}^j=\frac{\Gamma(\mu+1)}{\Gamma(j+1)\Gamma(\mu-j+1)}$. It is worthy of noting that 
$$
\int_0^{+\infty}u^{\alpha}J_{\nu}(u)du=\left\{\begin{array}{ll}
\frac{
2^{\alpha}\Gamma\left(\frac{\alpha+\nu+1}{2}\right)}{\Gamma\left(\frac{\nu-\alpha+1}{2}\right)}<+\infty,&\nu-\alpha+1\not=0\\
x^{\nu+1}J_{\nu+1}(x)\Big|_0^{+\infty}=0,&\nu-\alpha+1=0\end{array}\right.,
\Re(\alpha+\nu)>-1,\Re(\alpha)<\frac{1}{2}
$$
(\cite[Eq. (9.1.30), Eq. (11.4.16)]{Abram}),  which implies that
$
\int_0^{\omega t}u^{\alpha}J_{\nu}(u)\,\mathrm{d}u
$
is uniformly bounded for $t\in [0,+\infty)$ since this integral with a variable upper bound is continuous and convergent as $t\rightarrow +\infty$.

Then 
from the assumption $\alpha+\nu>-1$, we may choose $\eta$ with  $0<\eta<\min\{\frac{1}{2}-\alpha, \alpha+\nu+1\}$ such that  $\alpha\pm \eta<\frac{1}{2}$, $\alpha+\nu\pm \eta>-1$, which implies for
$j=0,1,\ldots,\mu$ that
 $
\int_0^{1}
[u^{\eta}\ln^{\mu-j}(u)]u^{\alpha-\eta}J_{\nu}(u)\,\mathrm{d}u
$
 is  convergent, and $
\int_1^{+\infty}
[u^{-\eta}\ln^{\mu-j}(u)]u^{\alpha+\eta}J_{\nu}(u)\,\mathrm{d}u
$
is also  convergent by Abel  criterion for improper integrals \cite{Mark} due to that $\int_1^{+\infty}
u^{\alpha+\eta}J_{\nu}(u)\,\mathrm{d}u
$ is convergent and $[u^{-\eta}\ln^{\mu-j}(u)]$ is decreasing and tends to $0$ as $t\rightarrow +\infty$. Thus, $
\int_0^{+\infty}\ln^{\mu-j}(u)u^{\alpha}J_{\nu}(u)\,\mathrm{d}u
$ is convergent
 and $
\int_0^{\omega t}\ln^{\mu-j}(u)u^{\alpha}J_{\nu}(u)\,\mathrm{d}u
$
is uniformly bounded for $t\in [0,+\infty)$ too. These together with  (\ref{eq1}) lead to the desired result (\ref{lemma31}) in the case $\alpha<\frac{1}{2}$.

In the case $\alpha= \frac{1}{2}$:   Noting that $[z^{\nu+1}J_{\nu+1}(z)]'=z^{\nu+1}J_{\nu}(z)$, the integral can be represented for $\mu\ge 1$ as
$$\begin{array}{lll}
 \int_{0}^{t}\!\ln^{\mu}(x)x^{\alpha}J_{\nu}(\omega x)\,\mathrm{d}x
&=& \omega^{-1}\int_{0}^{t}\!\ln^{\mu}(x)x^{\alpha-\nu-1}\,\mathrm{d}[x^{\nu+1}J_{\nu+1}(\omega x)]\\
&=& \omega^{-1}\ln^{\mu}(x)x^{\alpha}J_{\nu+1}(\omega x)\Big|_0^t \\&&- \omega^{-1}\int_{0}^{t}\left[\mu x^{\alpha-1}\ln^{\mu-1}(u)+(\alpha-\nu-1)\ln^{\mu}(x)x^{\alpha-1}\right]J_{\nu+1}(\omega x)\,\mathrm{d}x\\
&=& \omega^{-1-\alpha}[\ln(\omega t)-\ln(\omega)]^{\mu}(\omega t)^{\alpha}J_{\nu+1}(\omega t)\\
&&-\omega^{-1}\int_{0}^{t}\left[\mu x^{\alpha-1}\ln^{\mu-1}(u)+(\alpha-\nu-1)\ln^{\mu}(x)x^{\alpha-1}\right]J_{\nu+1}(\omega x)\,\mathrm{d}x\\
 &=& {\cal O}\left(\ln^{\mu}(\omega))\omega^{-1-\alpha}\right)
  \end{array}
$$
by Lemma 2 and the above  proof since $\alpha-1<\frac{1}{2}$. Similarly, the above estimate is also satisfied  for $\mu=0$.

In the case $\alpha>\frac{1}{2}$:  It yields
$$\begin{array}{lll}
\int_0^t\ln^{\mu}(x)x^{\alpha}J_{\nu}(\omega x)dx
&=&\int_0^t[x^{\alpha-\frac{1}{2}}\ln^{\mu}(x)]x^{\frac{1}{2}}J_{\nu}(\omega x)dx\\
&=&\int_0^t[x^{\alpha-\frac{1}{2}}\ln^{\mu}(x)]d[\int_0^xu^{\frac{1}{2}}J_{\nu}(\omega u)du]\\
&=&[t^{\alpha-\frac{1}{2}}\ln^{\mu}(t)]\int_0^tx^{\frac{1}{2}}J_{\nu}(\omega x)dx- \int_0^t[x^{\alpha-\frac{1}{2}}\ln^{\mu}(x)]' \left(\int_0^xu^{\frac{1}{2}}J_{\nu}(\omega u)du\right)dx\\
&=&{\cal O}\left(\omega^{-\frac{3}{2}}\right)+{\cal O}\left(\omega^{-\frac{3}{2}}\right)\int_0^t[x^{\alpha-\frac{1}{2}}\ln^{\mu}(x)]' dx\\
&=&{\cal O}\left(\omega^{-\frac{3}{2}}\right).\end{array}
$$

 Expression (\ref{lemma32}) follows from (\ref{lemma31}) and 
$$\begin{array}{lll}
\int_0^t\psi (x)\ln^{\mu}(x)x^{\alpha}J_{\nu}(\omega x)dx
&=&\int_0^t\psi (x)\left[\int_0^x\ln^{\mu}(u)u^{\alpha}J_{\nu}(\omega u)du\right]'dx\\
&=&\psi(t)\int_0^t\ln^{\mu}(x)x^{\alpha}J_{\nu}(\omega x)dx-\int_0^t\psi' (x)\left[\int_0^x\ln^{\mu}(u)u^{\alpha}J_{\nu}(\omega u)du\right]dx.\end{array}
$$
\end{proof}

{\sc Remark 1}. From the proof of Lemma 2.3, we see that for  $\alpha+\nu>-1$,
$$
\int_0^t\ln^{\mu}(x)x^{\alpha}J_{\nu}(\omega x)dx={\cal O}\left(\ln^{\mu}(\omega)\omega^{-1-\alpha}\right)
$$
is satisfied uniformly for $t\in [0,+\infty)$  in the case $\alpha<\frac{1}{2}$, while
$$
\int_0^t\ln^{\mu}(x)x^{\frac{1}{2}}J_{\nu}(\omega x)dx={\cal O}\left(\ln^{\mu}(\omega)\omega^{-\frac{3}{2}}\right), \int_0^t\ln^{\mu}(x)x^{\alpha}J_{\nu}(\omega x)dx={\cal O}\left(\omega^{-\frac{3}{2}}\right) \,(\alpha> \frac{1}{2})
$$
uniformly for $t\in [0,b]$. In particular, for $\mu=0$ \cite{Xiang2013}
\begin{equation}\label{Bessel1}
\int_0^tx^{\alpha}J_{\nu}(\omega x)dx={\cal O}\left(\omega^{-\min\{1+\alpha,\frac{3}{2}\}}\right)\left\{ \begin{array}{l}\mbox{uniformly for $t\in [0,+\infty)$  for $\alpha<\frac{1}{2}$}\\
\mbox{uniformly for $t\in [0,b]$ for $\alpha\ge \frac{1}{2}$.}\end{array}\right.\end{equation}

\begin{lemma}  \cite{Xiang2007}  For $0<a<b$,  $\psi\in
C[a,b]$ and $\psi'\in
L^1[a,b]$, 
$$
\int_a^b\psi (t)J_{\nu}(\omega t)dt={\cal O}\left(\omega^{-\frac{3}{2}}\right).
$$
\end{lemma}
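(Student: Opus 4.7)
The plan is to reduce this to the boundary case already handled in Remark~1 by extracting a factor of $t^{1/2}$ from $J_\nu(\omega t)$ and using an integration-by-parts argument that exploits the fact that $a>0$. Specifically, I would write
$$\psi(t)J_\nu(\omega t)=\bigl[\psi(t)t^{-1/2}\bigr]\cdot\bigl[t^{1/2}J_\nu(\omega t)\bigr]$$
and define the antiderivative $G(x)=\int_a^{x}u^{1/2}J_\nu(\omega u)\,\mathrm{d}u$. By splitting $\int_a^x=\int_0^x-\int_0^a$ and applying (\ref{Bessel1}) with $\alpha=1/2$ (which holds uniformly for $x\in[0,b]$), we obtain $|G(x)|=\mathcal{O}(\omega^{-3/2})$ uniformly on $[a,b]$.

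Next I would integrate by parts, which is legitimate because the factor $\phi(t):=\psi(t)t^{-1/2}$ is absolutely continuous on $[a,b]$ (since $\psi\in C[a,b]$, $\psi'\in L^1[a,b]$, and $a>0$ keeps $t^{-1/2}$ smooth and bounded). This yields
$$\int_a^b\psi(t)J_\nu(\omega t)\,\mathrm{d}t=\phi(t)G(t)\Big|_a^b-\int_a^b G(t)\phi'(t)\,\mathrm{d}t=\phi(b)G(b)-\int_a^b G(t)\bigl[\psi'(t)t^{-1/2}-\tfrac{1}{2}\psi(t)t^{-3/2}\bigr]\mathrm{d}t.$$
The boundary term is $\mathcal{O}(\omega^{-3/2})$ directly from the bound on $G$. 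For the integral term, since $t^{-1/2},t^{-3/2}$ are bounded on $[a,b]$, $\psi$ is bounded, and $\psi'\in L^1[a,b]$, the bracketed expression is in $L^1[a,b]$; combined with $|G(t)|=\mathcal{O}(\omega^{-3/2})$ uniformly, this contribution is also $\mathcal{O}(\omega^{-3/2})$, completing the proof.

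The only subtle point—and the step I would be most careful with—is justifying integration by parts under the weak regularity hypothesis $\psi'\in L^1[a,b]$; one should verify that $\phi$ is absolutely continuous so that the fundamental theorem of calculus (and hence the IBP identity) applies. Everything else is a direct application of Remark~1, and crucially the argument fails at $a=0$ because $t^{-1/2}$ would no longer be bounded and $\phi'$ would cease to be integrable near the endpoint.
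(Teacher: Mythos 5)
Your proof is correct. Note that the paper itself gives no proof of this lemma --- it is simply imported from \cite{Xiang2007} --- so there is no in-paper argument to match; what you have produced is a self-contained derivation from the paper's own machinery, namely the uniform bound (\ref{Bessel1}) with $\alpha=\tfrac12$, whereas the cited reference obtains the result by substituting the large-argument asymptotics $J_{\nu}(\omega t)=\sqrt{2/(\pi\omega t)}\cos(\omega t-\nu\pi/2-\pi/4)+{\cal O}((\omega t)^{-3/2})$ (valid since $t\ge a>0$) and then applying a standard integration-by-parts estimate to the resulting trigonometric integral, which contributes ${\cal O}(\omega^{-1/2}\cdot\omega^{-1})$. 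The two routes are of comparable length; yours has the advantage of reusing (\ref{Bessel1}) and of making transparent why $a>0$ is needed (boundedness of $t^{-1/2}$ and $t^{-3/2}$, and integrability of $\phi'$), while the asymptotic-expansion route avoids any hidden restriction on $\nu$ --- your use of (\ref{Bessel1}) implicitly requires $\tfrac12+\nu>-1$, which is harmless in all of the paper's applications (where $\nu>0$) but worth stating. Your caveat about absolute continuity is well taken and is resolved exactly as you suggest: throughout the paper the hypothesis $\psi\in C[a,b]$, $\psi'\in L^1[a,b]$ is used precisely to license such integrations by parts (see the proof of (\ref{lemma32})), so $\psi$, and hence $\phi(t)=\psi(t)t^{-1/2}$, is to be read as absolutely continuous.
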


 \begin{lemma}\label{lemma5} (van der Corput lemma for Bessel transform II)
    Suppose  $\alpha+\nu>-1$, $\beta>-1$,   $b>0$, $\mu$ is a nonnegative integer, $\psi\in
C[0,b]$ and $\psi'\in
L^1[0,b]$, 
then it is satisfied for  $\omega \gg 1$,         
     \begin{equation}\label{lemma41}
        \int_{0}^{t}\!\ln^{\mu}(x)x^{\alpha}(b-x)^{\beta}\psi(x)J_{\nu}(\omega x)\,\mathrm{d}x=        {\cal O}\left(\max\left\{\frac{\ln^{\mu}(\omega)}{\omega^{\alpha+1}}, \frac{1}{\omega^{\min\{\beta+\frac{3}{2},\frac{3}{2}\}}}\right\}\right)
      \end{equation}
uniformly  for   $t\in [0,b]$. In particular, if $b=1$ and $\mu\ge 1$,  
 \begin{equation}\label{lemma42}
        \int_{0}^{t}\!\ln^{\mu}(x)x^{\alpha}(1-x)^{\beta}\psi(x)J_{\nu}(\omega x)\,\mathrm{d}x=        {\cal O}\left(\max\left\{\frac{\ln^{\mu}(\omega)}{\omega^{\alpha+1}}, \frac{1}{\omega^{\frac{3}{2}}}\right\}\right).
      \end{equation}
      \end{lemma}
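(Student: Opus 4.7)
The plan is to reduce the statement to Lemmas 2.3 and 2.4 by splitting the integration interval at $b/2$. Writing $\int_0^t = \int_0^{\min\{t,b/2\}} + \int_{b/2}^t$, the first piece has $(b-x)^{\beta}$ smooth and uniformly bounded, while only the second piece (which appears when $t>b/2$) carries the potentially singular weight at $x=b$.

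On $[0,\min\{t,b/2\}]$, I absorb $(b-x)^{\beta}$ into the smooth factor by setting $\tilde\psi(x):=(b-x)^{\beta}\psi(x)$, noting that $\tilde\psi\in C[0,b/2]$ with $\tilde\psi'\in L^1[0,b/2]$. Applying (\ref{lemma32}) of Lemma 2.3 directly gives ${\cal O}(\ln^{\mu}(\omega)/\omega^{\alpha+1})$ for $\alpha\le \frac{1}{2}$ and ${\cal O}(\omega^{-3/2})$ for $\alpha>\frac{1}{2}$, in each case within the claimed bound.

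On $[b/2,t]$ the function $h(x):=\ln^{\mu}(x)x^{\alpha}\psi(x)$ is in $C[b/2,b]$ with $h'\in L^1[b/2,b]$, so the only obstruction is $(b-x)^{\beta}$. For $\beta\ge 0$, I integrate by parts against $G(x):=\int_{b/2}^x J_{\nu}(\omega u)\,\mathrm{d}u$, which is uniformly ${\cal O}(\omega^{-3/2})$ by the proof underlying Lemma 2.4. The boundary term at $x=t$ is bounded by $|h(t)|(b-t)^{\beta}|G(t)|={\cal O}(\omega^{-3/2})$, and the remaining integral is at most $\|G\|_{\infty}\,\|[h(x)(b-x)^{\beta}]'\|_{L^1[b/2,b]}$, which is finite since $(b-x)^{\beta-1}\in L^1[b/2,b]$ for $\beta\ge 0$. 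For $-1<\beta<0$ a direct integration by parts fails because $[(b-x)^{\beta}]'\notin L^1$; instead I perform the change of variables $u=b-x$ to get $\int_{b-t}^{b/2}h(b-u)u^{\beta}J_{\nu}(\omega(b-u))\,\mathrm{d}u$ and insert the large-argument expansion $J_{\nu}(z)=\sqrt{2/(\pi z)}\cos(z-\frac{\nu\pi}{2}-\frac{\pi}{4})+{\cal O}(z^{-3/2})$ from \cite{Abram,Watson}. The ${\cal O}(z^{-3/2})$ remainder, multiplied by the integrable weight $u^{\beta}$, contributes ${\cal O}(\omega^{-3/2})$. The leading term reduces to a Fourier-type oscillatory integral $\omega^{-1/2}\int u^{\beta}H(u)e^{\pm i\omega u}\,\mathrm{d}u$ with $H(u):=h(b-u)/\sqrt{b-u}$ smooth on $[0,b/2]$; a standard scaling argument (using the identity $\int_0^{\infty}u^{\beta}e^{\pm iu}\,\mathrm{d}u=\Gamma(\beta+1)e^{\pm i\pi(\beta+1)/2}$ valid for $-1<\beta<0$, plus a single integration by parts in $H$) delivers ${\cal O}(\omega^{-\beta-1})$, producing the sharp ${\cal O}(\omega^{-\beta-3/2})$ rate.

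For the particular case (\ref{lemma42}) with $b=1$ and $\mu\ge 1$, I use the factorization $\ln^{\mu}(x)=(1-x)^{\mu}\phi(x)$ with $\phi(x)=(-\ln(x)/(1-x))^{\mu}$ smooth and positive on $(0,1]$, so that on $[1/2,t]$ the effective weight is $(1-x)^{\beta+\mu}\tilde\phi(x)$ with exponent $\beta+\mu>0$; the $\beta\ge 0$ argument then applies and yields ${\cal O}(\omega^{-3/2})$. The main obstacle is the $-1<\beta<0$ regime of (\ref{lemma41}): there the non-integrability of $[(b-x)^{\beta}]'$ blocks a direct integration by parts, and one must exploit the precise large-$z$ asymptotics of $J_{\nu}$ after reflecting the singularity to $u=0$ in order to extract the critical rate $\omega^{-\beta-3/2}$.
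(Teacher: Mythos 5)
Your argument is correct and, for most of its length, runs parallel to the paper's: the same split at $x=b/2$, the same absorption of the then-smooth factor $(b-x)^{\beta}$ into $\psi$ on the left half so that Lemma 3 applies, the same integration by parts against an ${\cal O}(\omega^{-3/2})$ antiderivative (your $G(x)=\int_{b/2}^{x}J_{\nu}(\omega u)\,\mathrm{d}u$ versus the paper's $F(z)=\int_{b/2}^{z}\ln^{\mu}(x)x^{\alpha}J_{\nu}(\omega x)\,\mathrm{d}x$, both controlled by Lemma 4), and the same factorization $\ln^{\mu}(x)=(1-x)^{\mu}\left[\ln(x)(1-x)^{-1}\right]^{\mu}$ for the case $b=1$, $\mu\ge 1$. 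The one genuinely different step is the regime $-1<\beta<0$ near $x=b$. The paper does \emph{not} abandon integration by parts there: it truncates at $b-\frac{1}{\omega}$, so that $\int_{b/2}^{b-1/\omega}(b-x)^{\beta-1}\,\mathrm{d}x={\cal O}(\omega^{-\beta})$ combines with $\|F\|_{\infty}={\cal O}(\omega^{-3/2})$ to give ${\cal O}(\omega^{-\beta-3/2})$, and then bounds the remaining sliver $[b-1/\omega,t]$ crudely by $|J_{\nu}(\omega x)|={\cal O}(\omega^{-1/2})$ times $\int_{b-1/\omega}^{t}(b-x)^{\beta}\,\mathrm{d}x={\cal O}(\omega^{-\beta-1})$. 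You instead reflect the singularity to the origin and insert the large-argument expansion of $J_{\nu}$, reducing to a weighted Fourier integral $\int u^{\beta}H(u)e^{\pm i\omega u}\,\mathrm{d}u$ handled by scaling. Both yield the same sharp rate $\omega^{-\beta-3/2}$; the paper's cutoff device is more elementary (no asymptotic expansion or Gamma-function identity needed, and it works verbatim with only $\psi\in C$, $\psi'\in L^{1}$), while your route makes the sharpness of the exponent more transparent. Two cosmetic points: your statement that integration by parts ``fails'' for $\beta<0$ is overstated given the paper's cutoff fix, and $H(u)=h(b-u)/\sqrt{b-u}$ is only continuous with $H'\in L^{1}$ under the stated hypotheses on $\psi$, not smooth — but that is all your single integration by parts requires.
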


      \begin{proof}
        For $t\in [0,\frac{b}{2}]$, from Lemma 3, it establishes
               $$
        \int_{0}^t \ln^{\mu}(x)x^{\alpha}(b-x)^{\beta}J_{\nu}(\omega x)\,\mathrm{d}x =   {\cal O}\left(\max\left\{\frac{\ln^{\mu}(\omega)}{\omega^{\alpha+1}}, \frac{1}{\omega^{\frac{3}{2}}}\right\}\right).
       $$
        
        For $t\in (\frac{b}{2},b]$, the integral can be written as 
  \begin{equation}\label{Besellint}\begin{array}{lll}
      &&  \int_{0}^t \ln^{\mu}(x)x^{\alpha}(b-x)^{\beta}J_{\nu}(\omega x)\,\mathrm{d}x\\&=&\int_{0}^{\frac{b}{2}} \ln^{\mu}(x)x^{\alpha}(b-x)^{\beta}J_{\nu}(\omega x)\,\mathrm{d}x+\int_{\frac{b}{2}}^t \ln^{\mu}(x)x^{\alpha}(b-x)^{\beta}J_{\nu}(\omega x)\,\mathrm{d}x\\
    &=& {\cal O}\left(\max\left\{\frac{\ln^{\mu}(\omega)}{\omega^{\alpha+1}}, \frac{1}{\omega^{\frac{3}{2}}}\right\}\right)+\int_{\frac{b}{2}}^t \ln^{\mu}(x)x^{\alpha}(b-x)^{\beta}J_{\nu}(\omega x)\,\mathrm{d}x.
       \end{array}
   \end{equation}
For the second term on the right-hand side of the first identity (\ref{Besellint}),   setting $F(z)=$ $\int_{\frac{b}{2}}^z\ln^{\mu}(x)x^{\alpha}J_{\nu}(\omega x)dx$, from Lemma 2.4 it yields $F(z)={\cal O}\left( \omega^{-\frac{3}{2}}\right)$.
Moreover, for $t\in [\frac{b}{2}, b-\frac{1}{\omega}]$  it follows that    
  $$ \begin{array}{lll}
        &&\displaystyle \Big|\int_{\frac{b}{2}}^{t}\ln^{\mu}(x)x^{\alpha}(b-x)^{\beta}J_{\nu}(\omega x)\,\mathrm{d}x\Big|\\
         \displaystyle&=&  \Big|\int_{\frac{b}{2}}^{t}(b-x)^{\beta} F'(x)\,\mathrm{d}x\Big|\\
        & \le&\max\{(\frac{b}{2})^{\beta}, |b-t|^{\beta}\}|F(t)|+|\beta|\|F\|_{\infty}\int_{\frac{b}{2}}^{b-\frac{1}{\omega}}(b-x)^{\beta-1}\,\mathrm{d}x\\
         \displaystyle &=&{\cal O}\left(\omega^{-\min\{\beta+\frac{3}{2},\frac{3}{2}\}}\right).\end{array}
        $$
For $t\in (b-\frac{1}{\omega},b]$ it follows
$$ \begin{array}{lll}
&&\int_{\frac{b}{2}}^{t}\ln^{\mu}(x)x^{\alpha}(b-x)^{\beta}J_{\nu}(\omega x)\,\mathrm{d}x\\
&=&\int_{\frac{b}{2}}^{b-\frac{1}{\omega}}\ln^{\mu}(x)x^{\alpha}(b-x)^{\beta}J_{\nu}(\omega x)\,\mathrm{d}x
+\int_{b-\frac{1}{\omega}}^{t}\ln^{\mu}(x)x^{\alpha}(b-x)^{\beta}J_{\nu}(\omega x)\,\mathrm{d}x\\
&=&{\cal O}\left(\omega^{-\min\{\beta+\frac{3}{2},\frac{3}{2}\}}\right)+\int_{b-\frac{1}{\omega}}^{t}\ln^{\mu}(x)x^{\alpha}(b-x)^{\beta}J_{\nu}(\omega x)\,\mathrm{d}x.\end{array}
$$
From 
 $J_{\nu}(\omega z)={\cal O}\left(\omega^{-\frac{1}{2}}\right)$ for $z\in [\frac{b}{2},b]$ (\cite{Abram}), it implies
  $$
\int_{b-\frac{1}{\omega}}^{t}\ln^{\mu}(x)x^{\alpha}(b-x)^{\beta}J_{\nu}(\omega x)\,\mathrm{d}x={\cal O}\left(\omega^{-\frac{1}{2}}\right) \int_{b-\frac{1}{\omega}}^{t}(b-x)^{\beta}\,\mathrm{d}x={\cal O}\left(\omega^{-\beta-\frac{3}{2}}\right).
   $$
These together lead to 
\begin{equation}\label{besselasy11}
        \int_{0}^{t}\!\ln^{\mu}(x)x^{\alpha}(b-x)^{\beta}J_{\nu}(\omega x)\,\mathrm{d}x=        {\cal O}\left(\max\left\{\frac{\ln^{\mu}(\omega)}{\omega^{\alpha+1}}, \frac{1}{\omega^{\min\{\beta+\frac{3}{2},\frac{3}{2}\}}}\right\}\right). 
      \end{equation}

Similarly, Expression (\ref{lemma41}) follows from (\ref{besselasy11}) and 
$$\begin{array}{lll}
&& \int_{0}^t \ln^{\mu}(x)x^{\alpha}(b-x)^{\beta}\psi(x)J_{\nu}(\omega x)\,\mathrm{d}x\\
&=&\int_0^t\psi (x)\left[\int_0^x\ln^{\mu}(u)u^{\alpha}(b-u)^{\beta}J_{\nu}(\omega u)du\right]'dx\\
&=&\psi(t)\int_0^t\ln^{\mu}(x)x^{\alpha}(b-x)^{\beta}J_{\nu}(\omega x)dx-\int_0^t\psi' (x)\left[\int_0^x\ln^{\mu}(u)u^{\alpha}(b-u)^{\beta}J_{\nu}(\omega u)du\right]dx.\end{array}
$$

Particularly, for $b=1$ and $\mu\ge 1$, rewriting
  $$
        \int_{0}^t \ln^{\mu}(x)x^{\alpha}(1-x)^{\beta}\psi(x)J_{\nu}(\omega x)\,\mathrm{d}x 
        = \int_{0}^t [\ln(x)(1-x)^{-1}]^{\mu}x^{\alpha}(1-x)^{\beta+\mu}\psi(x)J_{\nu}(\omega x)\,\mathrm{d}x 
       $$
yields (\ref{lemma42}) due to that  $\beta+\mu+\frac{3}{2}>\frac{3}{2}$.

    \end{proof}

\begin{lemma}\label{lemma6} (Generalized van der Corput lemma \cite{Xiang2019})\label{gcorpt}
  Suppose  $\omega \gg 1$ and $ \phi(x)\in C^{\infty}[0,b]$,  then it is satisfied that
    \begin{equation}\label{Besselasy1}
        \int_{0}^{b}\!x^{\alpha}(b-x)^{\delta-1}\phi(x)J_{\nu}(\omega x)\,\mathrm{d}x=O\left(\omega^{-\min\left\{\alpha+1,\delta+\frac{1}{2}\right\}}\right)
    \end{equation}
  for $\alpha>-1$, $\alpha+\nu>-1$, $\delta>0$.
\end{lemma}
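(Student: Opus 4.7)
The proof will parallel the structure of Lemma \ref{lemma5}: split $I := \int_0^b x^\alpha(b-x)^{\delta-1}\phi(x)J_\nu(\omega x)\,\mathrm{d}x$ at the midpoint $b/2$ so that each sub-integral carries only a one-sided singularity. On $[0,b/2]$ the factor $(b-x)^{\delta-1}\phi(x)$ is smooth with bounded derivative, so applying Lemma 3 (with $\mu=0$) to $\int_0^{b/2} x^\alpha[(b-x)^{\delta-1}\phi(x)]J_\nu(\omega x)\,\mathrm{d}x$ delivers the bound $O(\omega^{-\min\{\alpha+1,\,3/2\}})$, which is dominated by the claimed rate $\omega^{-\min\{\alpha+1,\,\delta+1/2\}}$ in the informative range of parameters.

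For the right piece $\int_{b/2}^b x^\alpha(b-x)^{\delta-1}\phi(x)J_\nu(\omega x)\,\mathrm{d}x$, where the algebraic singularity now sits at $x=b$, I would split once more at $x=b-1/\omega$. On the thin tail $[b-1/\omega,b]$ the argument $\omega x\ge \omega b-1$ is large, so Lemma 2 yields $|J_\nu(\omega x)|=O(\omega^{-1/2})$, while the weight $(b-x)^{\delta-1}$ integrates to $O(\omega^{-\delta})$ over an interval of length $1/\omega$; multiplying gives an $O(\omega^{-\delta-1/2})$ contribution. On the bulk $[b/2,\,b-1/\omega]$ I would introduce the primitive $F(x):=\int_{b/2}^x J_\nu(\omega t)\,\mathrm{d}t$, for which Lemma 4 supplies $F=O(\omega^{-3/2})$ uniformly, and integrate by parts against the smooth-times-algebraic factor $(b-x)^{\delta-1}x^\alpha\phi(x)$. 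The boundary contribution at $x=b-1/\omega$ is $O((1/\omega)^{\delta-1})\cdot O(\omega^{-3/2})=O(\omega^{-\delta-1/2})$, and the leftover integral reduces to $O(\omega^{-3/2})\int_{b/2}^{b-1/\omega}(b-x)^{\delta-2}\,\mathrm{d}x$, which is bounded by $O(\omega^{-\min\{\delta+1/2,\,3/2\}})$ after the explicit case analysis $\delta>1$, $\delta=1$, and $0<\delta<1$.

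Adding the two contributions yields the asserted estimate $I=O(\omega^{-\min\{\alpha+1,\,\delta+1/2\}})$. The main obstacle is the coupling of the algebraic endpoint singularity $(b-x)^{\delta-1}$ with the oscillation of $J_\nu$ near $x=b$: a naive integration by parts is foiled by the non-integrability of $[(b-x)^{\delta-1}]'$ when $0<\delta\le 1$, so the $O(1/\omega)$-neighborhood of $b$ must be carved off and controlled by the pointwise size bound on $J_\nu$ rather than by its oscillatory cancellation. This device is the crucial step and precisely mirrors the right-endpoint treatment carried out in the proof of Lemma \ref{lemma5}.
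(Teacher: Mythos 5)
The paper does not actually prove this lemma; it is quoted from the reference [Xiang2019], so the only in-paper analogue to measure your argument against is the proof of Lemma 7, which extends this statement to logarithmic weights. Judged on its own terms, your construction is correct as far as it goes: the split at $b/2$, Lemma 3 on the left half, and the carving-off of the $O(1/\omega)$ neighbourhood of $b$ followed by one integration by parts against $F(x)=\int_{b/2}^xJ_\nu(\omega t)\,\mathrm{d}t=O(\omega^{-3/2})$ on $[b/2,b-1/\omega]$ all work. But every one of these ingredients saturates at $\omega^{-3/2}$: the left half yields $O(\omega^{-\min\{\alpha+1,3/2\}})$ and the right half $O(\omega^{-\min\{\delta+1/2,3/2\}})$, so what you have actually proved is
\begin{equation*}
\int_0^b x^\alpha(b-x)^{\delta-1}\phi(x)J_\nu(\omega x)\,\mathrm{d}x=O\left(\omega^{-\min\{\alpha+1,\,\delta+1/2,\,3/2\}}\right),
\end{equation*}
which coincides with the asserted bound only when $\alpha\le\frac12$ or $\delta\le1$. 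The lemma claims the uncapped rate for all $\alpha>-1$, $\delta>0$, and the stronger regime is genuinely needed downstream: the proof of Lemma 7 invokes Lemma 6 precisely when $\frac32<\min\{\alpha+1,\beta+\frac32\}\le\frac52$ and beyond. Your hedge ``in the informative range of parameters'' is therefore concealing a real gap, not a triviality.

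The missing idea is iteration. When $\min\{\alpha+1,\delta+\frac12\}>\frac32$ (so $\alpha>\frac12$ and $\delta>1$), integrate by parts once using $\frac{d}{dz}[z^{\nu+1}J_{\nu+1}(z)]=z^{\nu+1}J_\nu(z)$: the boundary terms vanish because $x^{\alpha}$ and $(b-x)^{\delta-1}$ still vanish at their respective endpoints in this regime, one gains a factor $\omega^{-1}$, and the remaining integral has the same smooth-times-algebraic form with $(\alpha,\delta,\nu)$ replaced by $(\alpha-1,\delta-1,\nu+1)$, whose parameters still satisfy the hypotheses. Inducting on $k$ with $k+\frac32<\min\{\alpha+1,\delta+\frac12\}\le k+\frac52$ reduces the statement to the base case you have already handled. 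This is exactly the induction the paper performs in its proof of Lemma 7, and it is the step your argument needs in order to close.
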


 \begin{lemma}\label{lemma7} (van der Corput lemma for Bessel transform III)
    Suppose  $\alpha+\nu>-1$, $\beta>-1$,   $b>0$, $\mu$ is a nonnegative integer, and $\psi\in
C^{\infty}[0,b]$, then it is satisfied for  $\omega \gg 1$,         
     \begin{equation}\label{la71}
        \int_{0}^{b}\!\ln^{\mu}(x)x^{\alpha}(b-x)^{\beta}\psi(x)J_{\nu}(\omega x)\,\mathrm{d}x=        {\cal O}\left(\max\left\{\frac{\ln^{\mu}(\omega)}{\omega^{\alpha+1}}, \frac{1}{\omega^{\beta+\frac{3}{2}}}\right\}\right).
      \end{equation}
 In particular, if $b=1$ and $\mu\ge 1$,  
 \begin{equation}\label{la72}
        \int_{0}^{1}\!\ln^{\mu}(x)x^{\alpha}(1-x)^{\beta}\psi(x)J_{\nu}(\omega x)\,\mathrm{d}x=        {\cal O}\left(\max\left\{\frac{\ln^{\mu}(\omega)}{\omega^{\alpha+1}}, \frac{1}{\omega^{\beta+\mu+\frac{3}{2}}}\right\}\right).
      \end{equation}
      \end{lemma}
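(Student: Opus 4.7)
My plan is to mimic the proof of Lemma 5 by splitting the integral at $b/2$, but I can sharpen the Lemma 5 bound $\omega^{-\min\{\beta+3/2,3/2\}}$ down to $\omega^{-\beta-3/2}$ because here we have two extra ingredients: (a) the integration is over the entire interval $[0,b]$ (not a variable upper endpoint), so Lemma 6 is available; and (b) $\psi\in C^\infty[0,b]$, which allows iterated integration by parts via the Bessel identity $[x^{\nu+1}J_{\nu+1}(\omega x)]'=\omega x^{\nu+1}J_\nu(\omega x)$.

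Write $I=I_1+I_2$ with $I_1=\int_0^{b/2}$ and $I_2=\int_{b/2}^{b}$. For $I_1$: the factor $(b-x)^\beta\psi(x)\in C^\infty[0,b/2]$ with bounded derivative, so Lemma 3 applied with $\tilde\psi(x)=(b-x)^\beta\psi(x)$ gives $I_1={\cal O}(\max\{\ln^\mu(\omega)/\omega^{\alpha+1},\omega^{-3/2}\})$. When $\beta\le 0$ this is already within the target since $\omega^{-3/2}\le\omega^{-\beta-3/2}$; when $\beta>0$ the $\omega^{-3/2}$ contribution must be sharpened to $\omega^{-\beta-3/2}$, and this is done by rerunning the iterated-integration-by-parts argument behind Lemma 6 on $[0,b/2]$ (which uses precisely the $C^\infty$ smoothness of $(b-x)^\beta\psi(x)$ on that subinterval).

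For $I_2$: the amplitude $\Phi(x):=\ln^\mu(x)x^\alpha\psi(x)$ is $C^\infty[b/2,b]$. Introduce a smooth cutoff $\chi$ with $\chi\equiv 1$ on $[b/2,b]$ and $\chi\equiv 0$ on $[0,b/4]$, and write $I_2=\int_0^b\chi(x)\Phi(x)(b-x)^\beta J_\nu(\omega x)\,dx-\int_{b/4}^{b/2}(1-\chi(x))\Phi(x)(b-x)^\beta J_\nu(\omega x)\,dx$. Writing $\chi(x)\Phi(x)=x^\alpha\cdot[\chi(x)\Phi(x)/x^\alpha]$ with the bracketed factor $C^\infty[0,b]$ (since $\chi$ vanishes near $0$), the first integral is in the form of Lemma 6 with $\delta=\beta+1$ and returns ${\cal O}(\omega^{-\min\{\alpha+1,\beta+3/2\}})={\cal O}(\max\{\omega^{-\alpha-1},\omega^{-\beta-3/2}\})$. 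The second integral is over a compact subinterval bounded away from both $0$ and $b$, and Lemma 4 bounds it by ${\cal O}(\omega^{-3/2})$, to be absorbed as in the treatment of $I_1$.

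For the particular case $b=1$, $\mu\ge 1$: use the Taylor expansion $\ln(x)=-(1-x)-\tfrac{1}{2}(1-x)^2-\cdots=(1-x)h(x)$ valid near $x=1$, where $h\in C^\infty$ with $h(1)=-1$. Then $\ln^\mu(x)(1-x)^\beta\psi(x)=(1-x)^{\beta+\mu}h^\mu(x)\psi(x)$ on a neighbourhood of $x=1$, so running the argument for $I_2$ with $\beta$ replaced by $\beta+\mu$ and without a log factor yields ${\cal O}(\omega^{-\beta-\mu-3/2})$, which is exactly the claimed improvement. The main obstacle is the sharpening of the $I_1$ bound from the off-the-shelf $\omega^{-3/2}$ of Lemma 3 to $\omega^{-\beta-3/2}$ when $\beta>0$: handling this cleanly requires adapting the iterated-integration-by-parts machinery of Lemma 6 to the subinterval $[0,b/2]$ (using the Bessel recurrence and the $C^\infty$ smoothness of $(b-x)^\beta\psi(x)$ there) rather than the single-step bound on $F(x)=\int_0^x u^{1/2}J_\nu(\omega u)\,du$ used in Lemmas 3 and 5.
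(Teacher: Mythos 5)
Your decomposition $I=I_1+I_2$ with a \emph{hard} cut at $x=b/2$ cannot deliver the claimed bound when $\beta>0$ and $\alpha>\frac12$, and this is exactly the regime where the lemma says something stronger than ${\cal O}(\omega^{-3/2})$. The piece $I_1=\int_0^{b/2}\ln^{\mu}(x)x^{\alpha}(b-x)^{\beta}\psi(x)J_{\nu}(\omega x)\,dx$ has a non-vanishing amplitude at the artificial endpoint $x=b/2$, and a single integration by parts (using $J_\nu(\omega x)\sim\sqrt{2/(\pi\omega x)}\cos(\omega x-c)$) shows its leading behaviour is an oscillating boundary term of size $\Theta(\omega^{-3/2})$ — e.g.\ for $\mu=0$, $\psi\equiv1$, $\alpha=\beta=2$ one has $I_1\asymp\omega^{-3/2}$ along a sequence of $\omega$, while the target for the full integral is $\omega^{-3}$. "Rerunning the iterated-integration-by-parts machinery of Lemma 6 on $[0,b/2]$" cannot repair this: the obstruction is not the machinery but the boundary term at $b/2$, which reappears at every step because the amplitude does not vanish there. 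The same defect afflicts your correction term $\int_{b/4}^{b/2}\chi\Phi(b-x)^{\beta}J_\nu(\omega x)\,dx$ (note also it should carry $\chi$, not $1-\chi$): Lemma 4 gives ${\cal O}(\omega^{-3/2})$ there and that is sharp, so it cannot be "absorbed". The two $\Theta(\omega^{-3/2})$ contributions at $x=b/2$ cancel only when the pieces are recombined — your proof discards precisely that cancellation. The salvageable version of your idea is to apply the smooth partition of unity to the \emph{whole} integral from the outset, $I=\int_0^b(1-\chi)(\cdots)+\int_0^b\chi(\cdots)$: then $(1-\chi)$ vanishes to infinite order at the cut and each piece has only one genuine singularity; the second piece is your integral $A$ and is handled by Lemma 6 exactly as you say.

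The paper avoids cutting altogether: it integrates by parts on all of $[0,b]$ via $[x^{\nu+1}J_{\nu+1}(\omega x)]'=\omega x^{\nu+1}J_{\nu}(\omega x)$, which extracts a factor $\omega^{-1}$ while reducing $(\alpha,\beta,\nu)$ to $(\alpha-1,\beta-1,\nu+1)$, and iterates $k$ times until $\min\{\alpha+1,\beta+\frac32\}\le k+\frac32$, at which point Lemma 5 (whose exponent $\min\{\beta+\frac32,\frac32\}$ is then not a loss) finishes the base case; the $\mu=0$ base case is Lemma 6 directly. For the special case $b=1$, $\mu\ge1$, the paper simply rewrites $\ln^{\mu}(x)(1-x)^{\beta}=[\ln(x)(1-x)^{-1}]^{\mu}(1-x)^{\beta+\mu}$ and reapplies \eqref{la71} with $\beta$ replaced by $\beta+\mu$, which is the same observation as your Taylor-expansion step; that part of your plan is fine once the main estimate is proved correctly.
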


      \begin{proof}
If $\min\left\{\alpha+1,\beta+\frac{3}{2}\right\}\le \frac{3}{2}$, setting $F(x)=    \int_{0}^{x}\!\ln^{\mu}(t)t^{\alpha}(b-t)^{\beta}J_{\nu}(\omega t)\,\mathrm{d}t$, it follows
$$
    \int_{0}^{b}\!\ln^{\mu}(x)x^{\alpha}(b-x)^{\beta}\psi(x)J_{\nu}(\omega x)\,\mathrm{d}x=    \int_{0}^{b}\psi(x)\,\mathrm{d}F(x)
    =\psi(b)F(b) -  \int_{0}^{b}\psi'(x)F(x)\mathrm{d}x,
$$
which implies the desired result by Lemma 5.

 If $ \frac{3}{2}<\min\left\{\alpha+1,\beta+\frac{3}{2}\right\}\le \frac{5}{2}$,    by integrating by parts, it follows for $\mu\ge 1$ that
 $$\begin{array}{lll}
  && \int_{0}^{b}\!\ln^{\mu}(x)x^{\alpha}(b-x)^{\beta}\psi(x)J_{\nu}(\omega x)\,\mathrm{d}x\\
&=& \omega^{-1}\int_{0}^{b}\ln^{\mu}(x)x^{\alpha-\nu-1}(b-x)^{\beta}\psi(x)\,\mathrm{d}[x^{\nu+1}J_{\nu+1}(\omega x)]\\
&=&- \omega^{-1}\int_{0}^{b}
\ln^{\mu}(x)x^{\alpha-1}(b-x)^{\beta-1}\phi(x)J_{\nu+1}(\omega x)\,\mathrm{d}x\\
&&- \mu \omega^{-1}\int_{0}^{b}
x^{\alpha-1}(b-x)^{\beta-1}\ln^{\mu-1}(x)(b-x)\psi(x)J_{\nu+1}(\omega x)\,\mathrm{d}x\\    
&=&  {\cal O}\left(\max\left\{\frac{\ln^{\mu}(\omega)}{\omega^{\alpha+1}}, \frac{1}{\omega^{\beta+\frac{3}{2}}}\right\}\right),\end{array}
$$
where $\phi(x)=(\alpha-\nu-1)(b-x)\psi(x)-\beta x\psi(x)+x(b-x)\psi'(x)$. In the case $\mu=0$, it directly follows from Lemma 6.

   By induction on  $k$ for $k+\frac{3}{2}<\min\left\{\alpha+1,\beta+\frac{3}{2}\right\}\le k+\frac{5}{2}$ with $k=0,1,\ldots$,  by a similar proof to the above, it is easy to derive the desired result (\ref{la71}).
           
  The special case $b=1$ and $\mu\ge 1$ follows from (\ref{la71}) together with
  $$
        \int_{0}^1 \ln^{\mu}(x)x^{\alpha}(1-x)^{\beta}\psi(x)J_{\nu}(\omega x)\,\mathrm{d}x 
        = \int_{0}^1 [\ln(x)(1-x)^{-1}]^{\mu}x^{\alpha}(1-x)^{\beta+\mu}\psi(x)J_{\nu}(\omega x)\,\mathrm{d}x. 
       $$

           \end{proof}

By a similar proof, the following are satisfied.

 \begin{lemma}\label{lemma8} (van der Corput lemma for Bessel transform IV)
    Suppose  $\alpha+\nu>-1$, $\beta>-1$,   $b>0$, $\mu$ is a nonnegative integer, $\psi\in
C[0,b]$ and $\psi'\in
L^1[0,b]$, 
then it is satisfied for  $\omega \gg 1$,         
     \begin{equation}\label{la8}
        \int_{0}^{t}\!\ln^{\mu}(b-x)x^{\alpha}(b-x)^{\beta}\psi(x)J_{\nu}(\omega x)\,\mathrm{d}x=        {\cal O}\left(\max\left\{\frac{1}{\omega^{\alpha+1}}, \frac{\ln^{\mu}(\omega)}{\omega^{\min\{\beta+\frac{3}{2},\frac{3}{2}\}}}\right\}\right). 
      \end{equation}
uniformly  for   $t\in [0,b]$.     In particular, if $b=1$ and $\mu\ge 1$,  $\alpha$ in (\ref{la8}) can be replaced by $\alpha+\mu$.
 \end{lemma}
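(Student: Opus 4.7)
The plan is to mirror the proof of Lemma 5 (Lemma 2.5) almost verbatim, exchanging the roles of the left and right endpoints. In Lemma 5 the logarithmic singularity $\ln^{\mu}(x)$ sits at $x=0$ and interacts with $x^{\alpha}$; in Lemma 8 the logarithmic singularity $\ln^{\mu}(b-x)$ sits at $x=b$ and reinforces the algebraic factor $(b-x)^{\beta}$ there, while $x^{\alpha}$ is by itself at the left endpoint. So the decomposition now places the ``easy half'' at $x=0$ and the ``hard half'' at $x=b$.

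First, I would split $[0,t]$ at $b/2$. On $[0,b/2]$ the factor $\ln^{\mu}(b-x)(b-x)^{\beta}\psi(x)$ is continuous with integrable derivative, so it can be absorbed into a new amplitude $\tilde\psi$, and Lemma 2.3 (equivalently Remark 1) gives
\[
\int_{0}^{t}\ln^{\mu}(b-x)x^{\alpha}(b-x)^{\beta}\psi(x)J_{\nu}(\omega x)\,dx={\cal O}\bigl(\omega^{-\min\{\alpha+1,3/2\}}\bigr)
\]
uniformly for $t\in[0,b/2]$, which already accounts for the $\omega^{-\alpha-1}$ term of (la8).

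For $t\in(b/2,b]$, I would write the integral as the piece on $[0,b/2]$ (estimated above) plus the piece on $[b/2,t]$. On $[b/2,t]$ the function $x^{\alpha}\psi(x)$ is continuous with $L^1$ derivative, so by Lemma 2.4 the primitive
\[
F(z)=\int_{b/2}^{z}x^{\alpha}\psi(x)J_{\nu}(\omega x)\,dx={\cal O}\bigl(\omega^{-3/2}\bigr)
\]
uniformly in $z\in[b/2,b]$. Integrating by parts,
\[
\int_{b/2}^{t}\ln^{\mu}(b-x)(b-x)^{\beta}\,dF(x)=\ln^{\mu}(b-t)(b-t)^{\beta}F(t)-\int_{b/2}^{t}F(x)\frac{d}{dx}\bigl[\ln^{\mu}(b-x)(b-x)^{\beta}\bigr]dx,
\]
and the derivative produces $(b-x)^{\beta-1}$ multiplied by $\ln^{\mu}(b-x)$ and $\ln^{\mu-1}(b-x)$ terms. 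Subdividing further at $b-1/\omega$ exactly as in the proof of Lemma 2.5, the boundary term contributes ${\cal O}(\ln^{\mu}(\omega)\omega^{-\min\{\beta+3/2,3/2\}})$, while on $(b-1/\omega,t]$ I would use the uniform bound $|J_{\nu}(\omega x)|={\cal O}(\omega^{-1/2})$ on any set bounded away from $0$, integrate the elementary $\int(b-x)^{\beta}\ln^{\mu}(b-x)dx$ over a window of length $1/\omega$, and obtain the same ${\cal O}(\ln^{\mu}(\omega)\omega^{-\min\{\beta+3/2,3/2\}})$ bound. Taking the maximum over the two regimes yields (la8).

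For the special case $b=1$ and $\mu\ge 1$, I would apply the algebraic rewriting
\[
\ln^{\mu}(1-x)x^{\alpha}(1-x)^{\beta}\psi(x)=\Bigl[\tfrac{\ln(1-x)}{x}\Bigr]^{\mu}x^{\alpha+\mu}(1-x)^{\beta}\psi(x),
\]
which moves the factor $\mu$ out of the log and into $x^{\alpha}$; the new ``log'' $[\ln(1-x)/x]^{\mu}$ is bounded near $x=0$ (since $\ln(1-x)/x\to -1$) and has exactly the same behavior near $x=1$ as $\ln^{\mu}(1-x)$. Applying the just-proved (la8) with $\alpha$ replaced by $\alpha+\mu$ gives the stated refinement. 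The main bookkeeping obstacle will be the case split on the sign of $\beta$ when estimating $\int_{b/2}^{b-1/\omega}(b-x)^{\beta-1}\ln^{\mu}(b-x)\,dx$: for $\beta>0$ the integral is bounded and the contribution is ${\cal O}(\omega^{-3/2})$, while for $-1<\beta\le 0$ it grows like $\ln^{\mu}(\omega)\omega^{-\beta}$, producing ${\cal O}(\ln^{\mu}(\omega)\omega^{-\beta-3/2})$; verifying that both cases collapse into the single bound $\ln^{\mu}(\omega)\omega^{-\min\{\beta+3/2,3/2\}}$ is the main delicate point, and is handled exactly as in the final steps of Lemma 2.5.
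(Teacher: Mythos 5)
Your proposal is correct and is precisely the ``similar proof'' the paper alludes to but omits: you mirror the proof of Lemma 5 with the roles of the endpoints exchanged, using Lemma 3 on $[0,b/2]$, Lemma 4 plus integration by parts and the splitting at $b-1/\omega$ on $[b/2,t]$, and the rewriting $\ln^{\mu}(1-x)x^{\alpha}=[\ln(1-x)/x]^{\mu}x^{\alpha+\mu}$ for the special case. The only cosmetic caveat is that in the $b=1$ case one should say the two-piece argument is rerun with the factor $[\ln(1-x)/x]^{\mu}$ (analytic near $0$, $\sim\ln^{\mu}(1-x)$ near $1$) absorbed into the amplitude on the left half, rather than literally ``applying (\ref{la8})'' whose hypotheses the rewritten integrand does not formally satisfy --- the same level of informality the paper itself uses in Lemmas 5 and 7.
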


 \begin{lemma}\label{lemma9} (van der Corput lemma for Bessel transform V)
    Suppose  $\alpha+\nu>-1$, $\beta>-1$,   $b>0$, $\mu$ is a nonnegative integer, and $\psi\in
C^{\infty}[0,b]$, then it is satisfied for  $\omega \gg 1$,         
     \begin{equation}\label{la9}
        \int_{0}^{b}\!\ln^{\mu}(b-x)x^{\alpha}(b-x)^{\beta}\psi(x)J_{\nu}(\omega x)\,\mathrm{d}x=        {\cal O}\left(\max\left\{\frac{1}{\omega^{\alpha+1}}, \frac{\ln^{\mu}(\omega)}{\omega^{\beta+\frac{3}{2}}}\right\}\right).
      \end{equation}
        In particular, if $b=1$ and $\mu\ge 1$,  $\alpha$ in (\ref{la9}) can be replaced by $\alpha+\mu$.
      \end{lemma}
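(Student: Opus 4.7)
My plan is to mirror the proof of Lemma~\ref{lemma7} with the roles of the two endpoints reversed: the logarithmic factor now sits at $x=b$ rather than at $x=0$, so Lemma~\ref{lemma8} will play the role there assigned to Lemma~\ref{lemma5}. The structure is an induction on the integer $k\ge 0$ with $k+\frac{3}{2}<\min\{\alpha+1,\beta+\frac{3}{2}\}\le k+\frac{5}{2}$, carried out via the same Bessel recurrence $[x^{\nu+1}J_{\nu+1}(\omega x)]'=\omega x^{\nu+1}J_{\nu}(\omega x)$ used in Lemma~\ref{lemma7}.

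For the base case $\min\{\alpha+1,\beta+\frac{3}{2}\}\le\frac{3}{2}$, I set $F(x)=\int_{0}^{x}\ln^{\mu}(b-t)\,t^{\alpha}(b-t)^{\beta}J_{\nu}(\omega t)\,\mathrm{d}t$; by Lemma~\ref{lemma8} (applied with $\psi\equiv 1$), $F(x)={\cal O}(\max\{\omega^{-(\alpha+1)},\,\ln^{\mu}(\omega)\omega^{-\min\{\beta+3/2,\,3/2\}}\})$ uniformly on $[0,b]$, and the identity $\int_{0}^{b}\psi\,\mathrm{d}F=\psi(b)F(b)-\int_{0}^{b}\psi'(x)F(x)\,\mathrm{d}x$ transfers the same bound to the full integral. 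A short case check reconciles Lemma~\ref{lemma8}'s $\omega^{-\min\{\beta+3/2,3/2\}}$ with the cleaner $\omega^{-(\beta+3/2)}$ of the present statement: when $\beta\le 0$ the two coincide, and when $\beta>0$ the standing hypothesis forces $\alpha+1\le 3/2<\beta+3/2$, so $\omega^{-(\alpha+1)}$ dominates $\omega^{-(\beta+3/2)}$ in both expressions. For the inductive step, $\min\{\alpha+1,\beta+3/2\}>k+3/2\ge 3/2$ implies $\alpha>\frac{1}{2}$ and $\beta>0$, so the Bessel-IBP boundary terms vanish at both endpoints. Differentiating the smooth factor produces a $C^{\infty}$-coefficient combination of $\ln^{\mu}(b-x)x^{\alpha-1}(b-x)^{\beta-1}\phi_{2}(x)$ and $\ln^{\mu-1}(b-x)x^{\alpha-1}(b-x)^{\beta-1}\phi_{1}(x)$, each multiplied by $J_{\nu+1}(\omega x)$; these fit the present lemma with parameters $(\alpha-1,\beta-1,\nu+1)$, belong to a strictly smaller rung, and hence contribute ${\cal O}(\max\{\omega^{-\alpha},\,\ln^{\mu}(\omega)\omega^{-(\beta+1/2)}\})$ by the inductive hypothesis. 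Multiplying by the external $\omega^{-1}$ yields the claimed bound; the case $\mu=0$ reduces directly to Lemma~\ref{lemma6}.

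For the $b=1,\,\mu\ge 1$ refinement I intend to exploit the algebraic identity
$$\ln^{\mu}(1-x)\,x^{\alpha}=[x^{-1}\ln(1-x)]^{\mu}\,x^{\alpha+\mu},$$
which transfers $\mu$ algebraic powers from the log factor onto $x$, so that an appeal to the just-established~(\ref{la9}) with $\alpha$ replaced by $\alpha+\mu$ delivers the sharpened estimate. The main obstacle I foresee is precisely this last step: the function $[x^{-1}\ln(1-x)]^{\mu}\psi(x)$ is not of class $C^{\infty}[0,1]$, because $x^{-1}\ln(1-x)$ carries a logarithmic singularity at $x=1$, so~(\ref{la9}) does not apply off the shelf. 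I plan to resolve this by rerunning the Bessel-IBP induction of the previous paragraph \emph{directly} on the rewritten integrand, checking at each step that every derivative of the new bracketed factor produces only terms absorbable into the $(1-x)^{\beta}$ weight with a harmless factor of $\ln(1-x)$ or $(1-x)^{-1}$; the induction then closes verbatim with $\alpha$ upgraded to $\alpha+\mu$.
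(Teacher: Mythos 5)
Your proof of the main estimate (\ref{la9}) follows exactly the route the paper intends --- the paper gives no explicit argument for Lemma~\ref{lemma9}, only the remark that it follows ``by a similar proof'' to Lemma~\ref{lemma7} --- namely a base case obtained by integrating $\psi$ against $\mathrm{d}F$ with $F$ controlled by Lemma~\ref{lemma8}, followed by induction through the recurrence $[x^{\nu+1}J_{\nu+1}(\omega x)]'=\omega x^{\nu+1}J_{\nu}(\omega x)$, the boundary terms dying because $\alpha>\frac{1}{2}$ and $\beta>0$ on the higher rungs. That part is sound except at one borderline exponent your reconciliation step overlooks: when $\beta>0$ and $\alpha=\frac{1}{2}$ (admissible in the base case, since $\alpha+1=\frac{3}{2}$), Lemma~\ref{lemma8} only delivers ${\cal O}(\ln^{\mu}(\omega)\omega^{-3/2})$ whereas (\ref{la9}) asserts ${\cal O}(\omega^{-3/2})$; your claim that $\omega^{-(\alpha+1)}$ dominates fails there and an extra $\ln^{\mu}(\omega)$ survives. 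This is only a logarithmic loss at a single parameter value, comparable to slack the paper tolerates elsewhere, but it should be acknowledged.

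The substantive gap is in the $b=1$, $\mu\ge 1$ refinement. You are right that $\ln^{\mu}(1-x)x^{\alpha}=[x^{-1}\ln(1-x)]^{\mu}x^{\alpha+\mu}$ does not let you invoke (\ref{la9}) off the shelf, since the transferred factor is log-singular at $x=1$ (the paper commits the same sin in the corresponding clauses of Lemmas \ref{lemma5}, \ref{lemma7} and \ref{lemma8}, so flagging it is to your credit); but your proposed repair --- rerun the induction, which ``closes verbatim with $\alpha$ upgraded to $\alpha+\mu$'' --- does not close as stated. In the inductive step the derivative of the logarithm produces the term $\ln^{\mu-1}(1-x)\,x^{\alpha}(1-x)^{\beta-1}\psi(x)$; if you regroup it as $x^{\alpha-1}(1-x)^{\beta-1}\cdot[x\psi(x)]$ so that the rung drops, the refined inductive hypothesis (now with log-power $\mu-1$) yields only ${\cal O}(\omega^{-((\alpha-1)+(\mu-1)+1)})={\cal O}(\omega^{-(\alpha+\mu-1)})$, and after the external $\omega^{-1}$ you land at $\omega^{-(\alpha+\mu)}$, one full power short of the claimed $\omega^{-(\alpha+\mu+1)}$. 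To close, that term must keep the full power $x^{\alpha}$ (parameters $(\alpha,\beta-1,\mu-1)$), for which the rung $\min\{\alpha+1,\beta+\frac{3}{2}\}$ need not decrease, so the induction has to be run lexicographically in $(\mu,k)$ rather than on $k$ alone; and the base case requires the refined ``$\alpha\to\alpha+\mu$'' clause of Lemma~\ref{lemma8}, which the paper also states without proof. None of this is unfixable, but it is genuinely more than ``verbatim,'' and as written the refinement remains a plan rather than a proof.
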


The asymptotics on Lemma \ref{lemma7} and Lemma \ref{lemma9} are illustrated with different values of $(\alpha,\beta)$ by  $\int_0^b\ln^{\mu}(x)x^{\alpha}(b-x)^{\beta}\psi(x)J_{\nu}(\omega x)dx$ with $b=\frac{1}{2}$, $\mu=1$ and $\psi(x)=\cos(x)$ for $\omega=1:1000$ in {\sc fig. } \ref{fig:figs2.1} which shows the estimates are tight.

\begin{figure}[hpbt]
\centerline{\includegraphics[height=6cm,width=16cm]{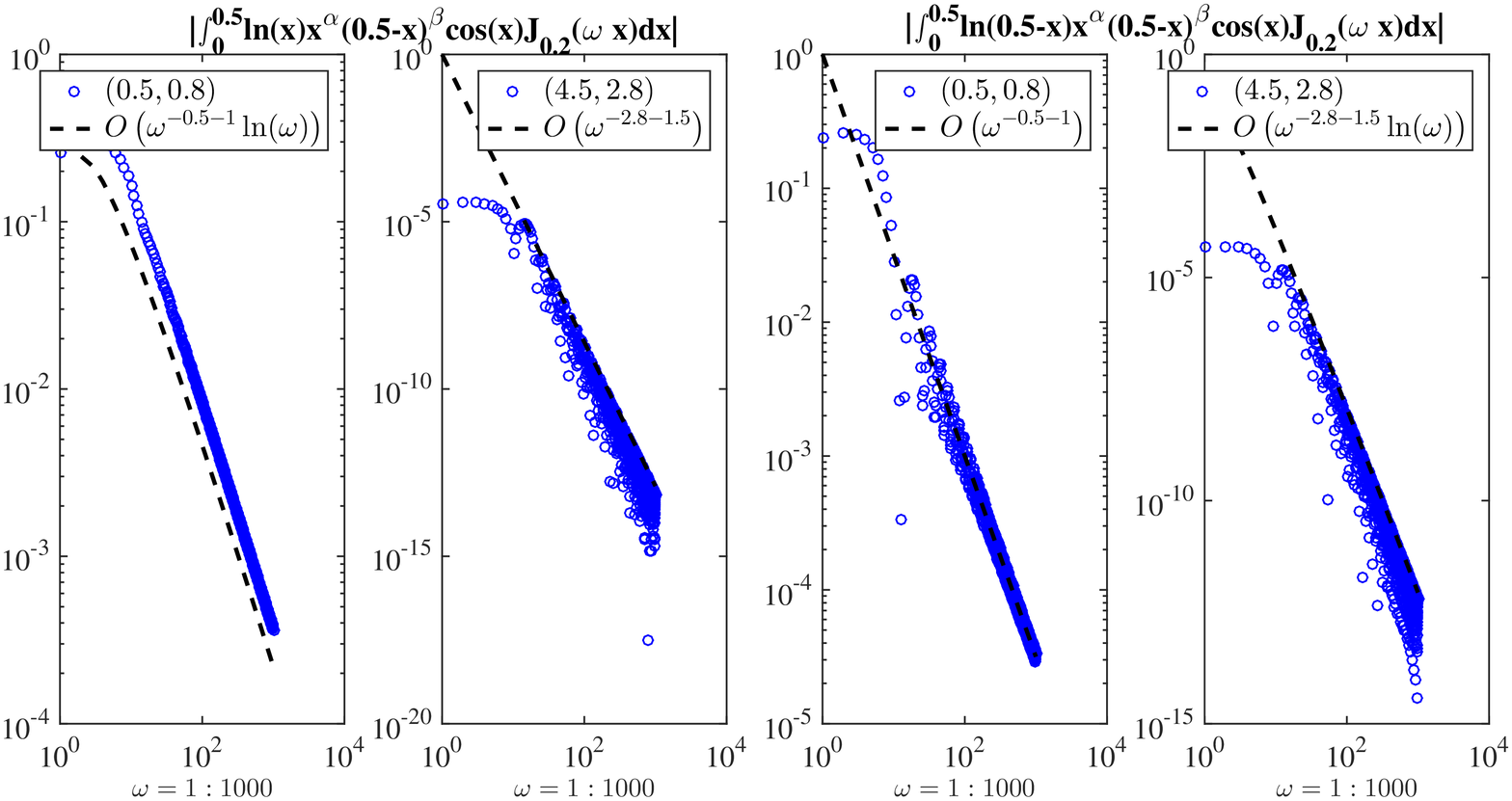}}
\caption{The asymptotics on Lemma 7 and Lemma 9 are illustrated with different values of $(\alpha,\beta)$.}\label{fig:figs2.1}
\end{figure}


\section{On functions of limited regularity at endpoints or interior points}

\subsection{Functions with boundary regularities}
Now we consider
\begin{equation}\label{rend}
  f(x)=(1-x)^{\gamma}\ln^{\mu}(1-x)g(x),
\end{equation}
where  $\mu$ is a positive integer  and $g\in C^{\infty}[-1,1]$.

\begin{theorem}
Suppose $f(x)$  is defined by (\ref{rend}), then the Jacobi coefficients (\ref{eq:jacexpcoeffs})  satisfy  for $\alpha+\gamma>-1$ that
\begin{equation}\label{Jacobiend1}
  |a_{n}(\alpha,\beta)|=\left\{\begin{array}{ll}
  {\cal O}\left(n^{-\alpha-2\gamma-1}\ln^{\mu}(n)\right),&\mbox{$\gamma$  is not  an integer }\\
  {\cal O}\left(n^{-\alpha-2\gamma-1}\ln^{\mu-1}(n)\right),&\mbox{$\gamma$ is an  integer}.\end{array}\right.
\end{equation}
\end{theorem}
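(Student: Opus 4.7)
The plan is to reduce the Jacobi coefficient (\ref{eq:jacexpcoeffs}) to a Bessel transform of the type covered by Lemma \ref{lemma7}.

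First, I would substitute $x=\cos\theta$ to transport the singularity of $f$ at $x=1$ to $\theta=0$. Using $1-x=2\sin^2(\theta/2)$, $1+x=2\cos^2(\theta/2)$, and $dx=\sin\theta\,d\theta$, the integrand for $a_n(\alpha,\beta)$ becomes the product of $\sin^{2\alpha+2\gamma+1}(\theta/2)\cos^{2\beta+1}(\theta/2)$, the factor $[\ln(2\sin^2(\theta/2))]^{\mu}$, the smooth $g(\cos\theta)$, and the Jacobi polynomial. I would then invoke the Hilb-type formula in Lemma \ref{lem:lemma1} to replace $P_n^{(\alpha,\beta)}(\cos\theta)$ by its Bessel-function principal part plus the two error terms specified there. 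After the factors $\sin^{\alpha+1/2}(\theta/2)\cos^{\beta+1/2}(\theta/2)$ cancel between the weight and the Hilb approximation, the dominant piece of $a_n(\alpha,\beta)$ takes the form $C_n I_n$ where $C_n=\Gamma(n+\alpha+1)/[\sqrt{2}\,n!\,\tilde N^{\alpha}\sigma_n^{\alpha,\beta}]\sim n$ by Stirling, and
\[
I_n=\int_0^{\pi}\theta^{\alpha+2\gamma+1}(\pi-\theta)^{2\beta+1}\tilde\psi(\theta)\bigl[\ln\theta + r(\theta)\bigr]^{\mu}J_\alpha(\tilde N\theta)\,d\theta,
\]
with $\tilde\psi$ absorbing the smooth factors and $r(\theta)=\ln\bigl(2\sin^2(\theta/2)/\theta^2\bigr)\in C^{\infty}[0,\pi]$.

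Next, expanding $[\ln\theta+r(\theta)]^{\mu}$ by the binomial theorem and applying Lemma \ref{lemma7} (after a linear rescaling of the interval) to each resulting summand, I would obtain
\[
|I_n|=O\!\left(\max\!\left\{\frac{\ln^{\mu}\tilde N}{\tilde N^{\alpha+2\gamma+2}},\,\frac{1}{\tilde N^{2\beta+5/2}}\right\}\right).
\]
The hypothesis $\alpha+\gamma>-1$ secures the integrability condition required by Lemma \ref{lemma7}. Under the operative assumption that the singularity at $x=1$ controls the decay, the first term governs the maximum, and together with $C_n\sim n$ this yields $|a_n(\alpha,\beta)|=O(n^{-\alpha-2\gamma-1}\ln^{\mu}n)$, proving the claim for non-integer $\gamma$.

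For integer $\gamma$ and $\mu\ge 1$, I would exploit the fact that $\sin^{2\gamma}(\theta/2)$ is then smooth, so the rescaled integrand fits the template of the special case $b=1,\,\mu\ge 1$ in Lemma \ref{lemma7}. The identity $\ln x/(1-x)\in C^{\infty}[0,1]$ used in that proof effectively trades one factor of $\ln\theta$ for an extra power of the smooth endpoint variable, producing the advertised $\ln^{\mu-1}n$ gain. An equivalent viewpoint is that the Mellin-type leading coefficient of the Bessel transform carries a factor $\Gamma(-\gamma)^{-1}$ which vanishes at non-negative integer $\gamma$, so the residual leading behaviour drops one power of $\ln$.

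The principal technical obstacle will be showing that the two error pieces from Lemma \ref{lem:lemma1} are strictly subleading. On $\theta\in(0,cn^{-1}]$ the Hilb error contributes $\theta^{\alpha+2}\,O(\tilde N^{\alpha})$ against the factor $\theta^{\alpha+2\gamma+1}\ln^{\mu}\theta$, and direct integration yields an effect on $a_n$ of order $O(n^{-\alpha-2\gamma-4}\ln^{\mu}n)$, comfortably below the main term. On $[cn^{-1},\pi-\epsilon]$ the error is $\theta^{1/2}\,O(\tilde N^{-3/2})$, and applying the $O(\omega^{-3/2})$ oscillatory-integral bound (the Xiang 2007 lemma cited between Lemmas \ref{lemma5} and \ref{lemma6}) after integration by parts gives a bound of order at most $O(n^{-5/2}\ln^{\mu}n)$; verifying that this is dominated by the main term in the full parameter range $\alpha+\gamma>-1$ is where the bookkeeping is most delicate.
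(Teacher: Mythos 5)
Your overall strategy (Hilb formula plus the van der Corput lemmas for Bessel transforms) is the right family of tools, but you have omitted the one step the paper's proof cannot do without: the preliminary integration by parts via Rodrigues' formula. The paper first chooses $k_0$ with $\alpha+2\gamma-k_0+2\le \tfrac32\le\beta+k_0+2$ and rewrites $a_n(\alpha,\beta)$ as an integral of $f^{(k_0)}$ against $(1-x)^{\alpha+k_0}(1+x)^{\beta+k_0}P_{n-k_0}^{(\alpha+k_0,\beta+k_0)}$, divided by $\sigma_n^{\alpha,\beta}n(n-1)\cdots(n-k_0+1)\sim n^{k_0-1}$. Only after this reduction is the Hilb formula applied. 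This matters because every ``junk'' contribution in the subsequent analysis --- the Hilb remainder $\theta^{1/2}{\cal O}(\tilde N^{-3/2})$, the cut at $\theta=\pi/2$ forced by the fact that the Hilb formula is only valid on $[cn^{-1},\pi-\epsilon]$, and the reflected endpoint $x=-1$ where the Jacobi weight carries the non-integer exponent $\beta+k_0$ --- is bounded only by ${\cal O}(\tilde N^{-3/2})$, and becomes ${\cal O}(n^{1/2-k_0})$ after dividing by $n^{k_0-1}$. The choice of $k_0$ makes $n^{1/2-k_0}\le n^{-\alpha-2\gamma-1}$, so these terms are absorbed. In your direct approach the normalization is only $1/\sigma_n^{\alpha,\beta}\sim n$, so the Hilb remainder alone already contributes ${\cal O}(n^{-1/2})$ to $a_n$ (there is no oscillation left in an ${\cal O}$-term to exploit by further integration by parts, so your claimed ${\cal O}(n^{-5/2}\ln^\mu n)$ is not attainable), and this swamps the target rate $n^{-\alpha-2\gamma-1}$ whenever $\alpha+2\gamma>-\tfrac12$. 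The same defect shows up in your own main estimate: the second entry of your $\max$, ${\cal O}(\tilde N^{-2\beta-5/2})$, gives ${\cal O}(n^{-2\beta-3/2})$ after multiplying by $C_n\sim n$, and your ``operative assumption'' that the first entry governs is simply false for, say, $\alpha=\beta=0$, $\gamma=3.5$. Moreover that entry is itself an artifact of applying the Hilb formula with $J_\alpha(\tilde N\theta)$ on all of $[0,\pi]$, which Lemma~1 does not permit; the region near $\theta=\pi$ must be treated with the reflected formula involving $J_{\beta+k_0}$.

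The integer-$\gamma$ improvement is also misattributed. The trick $\ln(x)(1-x)^{-1}\in C^\infty$ in Lemma~7 trades a logarithm at $x=0$ for extra decay at the \emph{other} endpoint; it does not lower the power of $\ln$ in the leading term. The paper's mechanism is different: when $\gamma\in{\cal N}_0$ and $k_0>\max\{\gamma,\mu\}$, differentiating $(1-x)^\gamma\ln^\mu(1-x)g(x)$ $k_0$ times leaves the full $\ln^\mu(1-x)$ only multiplied by the \emph{smooth} factor $(1-x)^\gamma\phi_0(x)$ (contributing ${\cal O}(n^{-3/2})$ after normalization, i.e.\ negligible), while all genuinely singular terms $(1-x)^{\gamma-k_0+j}$ carry at most $\ln^{\mu-1}(1-x)$; hence the $\ln^{\mu-1}(n)$ rate. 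You would need to carry out this decomposition of $f^{(k_0)}$ explicitly rather than appeal to a Mellin heuristic.
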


\begin{proof}
Let  $k_0$ be a positive integer such that $\alpha+2\gamma-k_0+2\le \frac{3}{2}\le \beta+k_0+2$. Then $f^{(k_0)}(x)$ can be represented for $\gamma> 0$  not  an integer as 
$$\begin{array}{ll}
f^{(k_0)}(x)&=(1-x)^{\gamma-k_0}\left[\ln^{\mu}(1-x)h_{\mu}(x)+\ln^{\mu-1}(1-x)h_{\mu-1}(x)\right.\\
&\quad \left.+\cdots+\ln(1-x)h_{1}(x)+h_0(x)\right]\\
&=:(1-x)^{\gamma-k_0}\psi(x)\end{array}
$$
with $h_j\in C^{\infty}[-1,1],\, j=0,1,\ldots,\mu$.

Moreover, by  Rodrigues' formula  \cite[p. 94, (4.10.1)]{Szego}
$$ \begin{array}{lll}&&{\displaystyle
(1-x)^{\alpha}(1+x)^{\beta}P_n^{(\alpha,\beta)}(x)}\\
&=&{\displaystyle\frac{(-1)^k}{2^kn(n-1)\cdots(n-k+1)}\frac{\mathrm{d}^k}{\mathrm{d}x^k}\left\{(1-x)^{k+\alpha}(1+x)^{k+\beta}P_{n-k}^{(k+\alpha,k+\beta)}(x)\right\}},\end{array}
$$
we have
\begin{equation}\label{JacInt}
\begin{array}{ll}
  a_{n}(\alpha,\beta)&\displaystyle=\frac{1}{\sigma_{n}^{\alpha,\beta}}\int_{-1}^{1}\!(1-x)^{\alpha}(1+x)^{\beta}f(x)P_n^{(\alpha,\beta)}(x)\,\mathrm{d}x\\
 & \displaystyle=\frac{\int_{-1}^{1}\!(1-x)^{\alpha+k_0}(1+x)^{\beta+k_0}P_{n-k_0}^{(\alpha+k_0,\beta+k_0)}(x)
  f^{(k_0)}(x)\,\mathrm{d}x}{(-1)^{k_0}2^{k_0}\sigma_{n}^{\alpha,\beta}n(n-1)\cdots(n-k_0+1)}\\
  &\displaystyle=\frac{\int_{-1}^{1}\!(1-x)^{\alpha+\gamma}(1+x)^{\beta+k_0}P_{n-k_0}^{(\alpha+k_0,\beta+k_0)}(x)\psi(x)\,\mathrm{d}x}{(-1)^{k_0}2^{k_0}\sigma_{n}^{\alpha,\beta}n(n-1)\cdots(n-k_0+1)}.\\
  \end{array}
\end{equation}
Setting $x=\cos\theta$ and noting that $\ln(1-x)=2\ln(\theta)+2\ln\frac{\sin\theta/2}{\theta/2}-\ln2$ and $\ln\frac{\sin\theta/2}{\theta/2}\in C^{\infty}[0,\pi]$, then $\psi(x)$ can be represented as
$$
\psi(\cos\theta)=\ln^{\mu}(\theta){\hat h}_{\mu}(\cos\theta)+\ln^{\mu-1}(\theta){\hat h}_{\mu-1}(\cos\theta)+\cdots+\ln(\theta){\hat h}_{1}(\cos\theta)+{\hat h}_{0}(\cos\theta)
$$
with ${\hat h}_j\in C^{\infty}[0,\pi],\, j=0,1,\ldots,\mu$. Then
by Lemma 1, Lemma 3 and (\ref{Bessel1}), the numerator in (\ref{JacInt}) can be estimated as
$$\begin{array}{ll}
&{\displaystyle \int_{-1}^{1}\!(1-x)^{\alpha+\gamma}(1+x)^{\beta+k_0}P_{n-k_0}^{(\alpha+k_0,\beta+k_0)}(x)\psi(x)\,\mathrm{d}x}\\
=&\displaystyle2^{\alpha+\beta+\gamma+k_{0}+1}\int_{0}^{\pi}\sin^{2\alpha+2\gamma+1}\frac{\theta}{2}\cos^{2\beta+2k_0+1}\frac{\theta}{2}P_{n-k_0}^{(\alpha+k_0,\beta+k_0)}(\cos\theta)\psi(\cos\theta)\,\mathrm{d}\theta\\
=&\displaystyle2^{\alpha+\beta+\gamma+k_{0}+1}\int_{0}^{\pi/2}\sin^{2\alpha+2\gamma+1}\frac{\theta}{2}\cos^{2\beta+2k_0+1}\frac{\theta}{2}P_{n-k_0}^{(\alpha+k_0,\beta+k_0)}(\cos\theta)\psi(\cos\theta)\,\mathrm{d}\theta\\
&+\displaystyle2^{\alpha+\beta+\gamma+k_{0}+1}\int_{\pi/2}^{\pi}\sin^{2\alpha+2\gamma+1}\frac{\theta}{2}\cos^{2\beta+2k_0+1}\frac{\theta}{2}P_{n-k_0}^{(\alpha+k_0,\beta+k_0)}(\cos\theta)\psi(\cos\theta)\,\mathrm{d}\theta\\
=&\displaystyle\frac{\Gamma(n+\alpha+1)}{(n-k_{0})!{\tilde N}^{\alpha+k_{0}}}
\int_{0}^{\pi/2}\!\theta^{\alpha+2\gamma-k_0+1} [\ln^{\mu}(\theta){\tilde h}_{\mu}(\theta)+\cdots+\ln(\theta){\tilde h}_{1}(\theta)+{\tilde h}_{0}(\theta)]J_{\alpha+k_0}({\tilde N}\theta)\,\mathrm{d}\theta\\
&+{\cal O}({\tilde N}^{-\frac{3}{2}})+\displaystyle\frac{(-1)^{n-k_0}\Gamma(n+\beta+1)}{(n-k_{0})!{\tilde N}^{\beta+k_{0}}}
\int_{0}^{\pi/2}\!\theta^{\beta+k_0+1}
J_{\beta+k_0}({\tilde N}\theta)\hat{\psi}_1(\theta)\,\mathrm{d}\theta+{\cal O}({\tilde N}^{-\frac{3}{2}})\\
=&\displaystyle {\cal O}\left({\tilde N}^{-\min\{\alpha+2\gamma-k_0+2,\frac{3}{2}\}}\ln^{\mu}({\tilde N})\right)+{\cal O}\left({\tilde N}^{-\min\{\alpha+2\gamma-k_0+2,\frac{3}{2}\}}\ln^{\mu-1}({\tilde N})\right)\\
&\displaystyle  +\cdots+{\cal O}\left({\tilde N}^{-\min\{\alpha+2\gamma-k_0+2,\frac{3}{2}\}}\right)+{\cal O}\left({\tilde N}^{-\min\{\beta+k_0+2,\frac{3}{2}\}}\right)+{\cal O}({\tilde N}^{-\frac{3}{2}})\\
=&\displaystyle {\cal O}\left({\tilde N}^{-\alpha-2\gamma+k_0-2}\ln^{\mu}({\tilde N})\right)
\end{array}
$$
where
$$\left\{\begin{array}{l}
{\tilde h}_{j}(\theta)=2^{\alpha+\beta+\gamma+k_{0}+\frac{1}{2}}\left(\frac{\sin\theta/2}{\theta}\right)^{\alpha+2\gamma-k_{0}+\frac{1}{2}}\cos^{\beta+k_0+\frac{1}{2}}\left(\frac{\theta}{2}\right){\hat h}_{j}(\theta),\\
\hat{\psi}_1(\theta)=2^{\alpha+\beta+\gamma+k_{0}+\frac{1}{2}}\left(\frac{\sin\theta/2}{\theta}\right)^{\beta+k_{0}+\frac{1}{2}}\cos^{\alpha+2\gamma-k_0+\frac{1}{2}}\left(\frac{\theta}{2}\right)\psi(-\cos(\theta)), \end{array}\right.
$$
and  ${\tilde N}=n+(\alpha+\beta+1)/2$. It is easy to verify that  $\hat{\psi}_1(\theta),\,{\tilde h}_{j}(\theta)\in C^{\infty}[0,\frac{\pi}{2}]$ for $j=0,\ldots,\mu$, then  together with $\sigma_{n}^{\alpha,\beta}={\cal O}(n^{-1})$,  it derives
$$\begin{array}{ll}
a_{n}(\alpha,\beta) &\displaystyle=\frac{\int_{-1}^{1}\!(1-x)^{\alpha+\gamma}(1+x)^{\beta+k_0}P_{n-k_0}^{(\alpha+k_0,\beta+k_0)}(x)\psi(x)\,\mathrm{d}x}{(-1)^{k_0}2^{k_0}\sigma_{n}^{\alpha,\beta}n(n-1)\cdots(n-k_0+1)}\\
&={\cal O}\left(n^{-\alpha-2\gamma-1}\ln^{\mu}(n)\right)  \end{array}.
$$

In the case $\gamma$ is a nonnegative  integer and $\mu>1$, we select $k_0$  further satisfied $k_0>\max\{\gamma,\mu\}$, then
$f^{(k_0)}(x)$ can be represented  as 
$$
f^{(k_0)}(x)=\sum_{j=0}^{k_0}(1-x)^{\gamma-k_0+j}\sum_{i=0}^{\mu-1}\ln^{i}(1-x)\phi_{j,i}(x)+(1-x)^{\gamma}\ln^{\mu}(x)\phi_0(x)
$$
with $\phi_0\in C^{\infty}[-1,1]$ and $\phi_{j,i}\in C^{\infty}[-1,1]$.
In analogy to the above proof, it also leads to the desired result.

In the case $\gamma$ is a nonnegative  integer and $\mu=1$,
$f^{(k_0)}(x)$ can be represented  as 
$$
f^{(k_0)}(x)=\ln(1-x)\psi_1(x)+(1-x)^{\gamma-k_0}\psi_2(x) \quad \psi_1,\psi_2\in C^{\infty}[-1,1].
$$
In analogy to the above proof, it leads to 
$$
\int_{-1}^{1}\!(1-x)^{\alpha+k_0}(1+x)^{\beta+k_0}P_{n-k_0}^{(\alpha+k_0,\beta+k_0)}(x)\ln(1-x)\psi_1(x)\,\mathrm{d}x={\cal O}(n^{-\frac{3}{2}})
$$
by $\alpha+k_0+1>\frac{1}{2}$, and
$$
\int_{-1}^{1}\!(1-x)^{\alpha+\gamma}(1+x)^{\beta+k_0}P_{n-k_0}^{(\alpha+k_0,\beta+k_0)}(x)\psi_2(x)\,\mathrm{d}x={\cal O}\left(n^{-\alpha-2\gamma+k_0-2}\right)  
$$
which together deduces the desired result (\ref{Jacobiend1}).
\end{proof}

Similar results can be obtained for 
\begin{equation}\label{lend}
  f(x)=(1+x)^{\delta}\ln^{\mu}(1+x)g(x),
\end{equation}
where  $\mu$ is a positive integer  and $g\in C^{\infty}[-1,1]$.

\begin{theorem}
Suppose $f(x)$ is defined by (\ref{lend}), then the Jacobi coefficients (\ref{eq:jacexpcoeffs})  satisfy that for $\beta+\delta>-1$
\begin{equation}\label{Jacobiend2}
  |a_{n}(\alpha,\beta)|=\left\{\begin{array}{ll}
  {\cal O}\left(n^{-\beta-2\delta-1}\ln^{\mu}(n)\right),&\mbox{$\delta$  is not  an integer }\\
  {\cal O}\left(n^{-\beta-2\delta-1}\ln^{\mu-1}(n)\right),&\mbox{$\delta$ is an  integer}.\end{array}\right.
\end{equation}
\end{theorem}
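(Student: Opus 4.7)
The proof should proceed by reducing Theorem 2 to Theorem 1 via the reflection $x \mapsto -x$, exploiting the symmetry of the Jacobi polynomials. The plan is to verify that the coefficient integral for $f(x) = (1+x)^\delta \ln^\mu(1+x)g(x)$ transforms, under this substitution, into the coefficient integral of a function of the form handled in Theorem 1.

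First, I would introduce $\tilde f(x) := f(-x) = (1-x)^\delta \ln^\mu(1-x)\tilde g(x)$ with $\tilde g(x) := g(-x) \in C^\infty[-1,1]$. Observe that $\tilde f$ fits precisely the framework of Theorem 1 with $\gamma$ replaced by $\delta$. In the Jacobi coefficient (\ref{eq:jacexpcoeffs}), I would perform the change of variables $x \mapsto -x$ and use the well-known reflection identity $P_n^{(\alpha,\beta)}(-x) = (-1)^n P_n^{(\beta,\alpha)}(x)$ (see Szeg\"o \cite{Szego}) together with the symmetry $\sigma_n^{\alpha,\beta} = \sigma_n^{\beta,\alpha}$ that is immediate from (\ref{eq:jacsigma}). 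This yields the identity
\begin{equation*}
a_n(\alpha,\beta) \;=\; \frac{(-1)^n}{\sigma_n^{\beta,\alpha}}\int_{-1}^{1} (1-x)^{\beta}(1+x)^{\alpha}\,\tilde f(x)\, P_n^{(\beta,\alpha)}(x)\,\mathrm{d}x \;=\; (-1)^n\,\tilde a_n(\beta,\alpha),
\end{equation*}
where $\tilde a_n(\beta,\alpha)$ is the Jacobi coefficient of $\tilde f$ with parameters swapped.

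Next, I would invoke Theorem 1 applied to $\tilde f$ with parameters $(\alpha,\beta)\leftarrow(\beta,\alpha)$ and $\gamma\leftarrow\delta$. The hypothesis of Theorem 1, namely $\alpha+\gamma>-1$, becomes exactly $\beta+\delta>-1$, matching the assumption of Theorem 2. Theorem 1 therefore gives
\begin{equation*}
|\tilde a_n(\beta,\alpha)| = \begin{cases} \mathcal{O}(n^{-\beta-2\delta-1}\ln^\mu n), & \delta \text{ not an integer},\\ \mathcal{O}(n^{-\beta-2\delta-1}\ln^{\mu-1} n), & \delta \text{ an integer},\end{cases}
\end{equation*}
and (\ref{Jacobiend2}) follows at once from $|a_n(\alpha,\beta)| = |\tilde a_n(\beta,\alpha)|$.

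The main step requiring care is the bookkeeping of the reflection: verifying that $\sigma_n^{\alpha,\beta}$ is indeed symmetric and that the sign $(-1)^n$ from the Jacobi reflection is harmless because only absolute values appear in the conclusion. No new analytic input is required, since all the heavy lifting (Rodrigues' formula, the Hilb asymptotics, and the van der Corput Bessel estimates of Lemmas 3, 5, 7) has already been carried out in the proof of Theorem 1. As a cross-check, one could alternatively mimic the direct proof of Theorem 1, using Rodrigues' formula in the form that extracts the $(1+x)^{\beta+k_0}$ factor and splitting the $\theta$-integral at $\pi/2$ so that the singularity at $\theta=\pi$ is handled by the Hilb formula after the change of variable $\theta\mapsto\pi-\theta$; this recovers the same bound but adds no new content, confirming the symmetry argument is the natural route.
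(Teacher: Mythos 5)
Your reduction is correct, and it is a genuinely different (and more economical) route than the paper's. The paper does not actually write out a proof of Theorem 2: it merely asserts that ``similar results can be obtained,'' i.e.\ it expects the reader to rerun the whole machinery of Theorem 1 (Rodrigues' formula with the $(1+x)^{\beta+k_0}$ factor extracted, the Hilb formula near $\theta=\pi$, and the van der Corput estimates) with the roles of the two endpoints interchanged. You instead observe that the substitution $x\mapsto -x$, the parity identity $P_n^{(\alpha,\beta)}(-x)=(-1)^nP_n^{(\beta,\alpha)}(x)$, and the manifest symmetry $\sigma_n^{\alpha,\beta}=\sigma_n^{\beta,\alpha}$ visible in (\ref{eq:jacsigma}) give the exact identity $a_n(\alpha,\beta)=(-1)^n\tilde a_n(\beta,\alpha)$ with $\tilde f(x)=f(-x)=(1-x)^{\delta}\ln^{\mu}(1-x)g(-x)$, so Theorem 2 is literally Theorem 1 with $(\alpha,\beta,\gamma)$ replaced by $(\beta,\alpha,\delta)$; the hypothesis $\alpha+\gamma>-1$ transforms into $\beta+\delta>-1$ and the exponent $-\alpha-2\gamma-1$ into $-\beta-2\delta-1$, exactly as claimed. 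This buys a one-line proof with no new analytic input and makes the provenance of the constants transparent, at the cost of inheriting verbatim whatever restrictions are implicit in the proof of Theorem 1 (e.g.\ its case analysis assumes $\gamma>0$ or $\gamma$ a nonnegative integer, so your Theorem 2 is established on the corresponding range of $\delta$); the paper's implied direct rederivation has no such dependence but duplicates all the work. Your bookkeeping of the sign $(-1)^n$ and of the swapped parameter order in $\tilde a_n(\beta,\alpha)$ is the only delicate point, and you have handled it correctly.
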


Now we consider
\begin{equation}\label{eq:boundsingularfunc}
  f(x)=(1-x)^\gamma(1+x)^{\delta}\ln^{\mu}(1-x^2)g(x),
\end{equation}
where  $\mu$ is a positive integer and $g\in C^{\infty}[-1,1]$. Then from Tuan and Elloiit \cite{Tuan}, $f(x)$ can be rewritten as
$$
  f(x)=(1-x)^{\gamma}\ln^{\mu}(1-x)g_1(x)+(1+x)^{\delta}\ln^{\mu}(1+x)g_2(x)
$$
with $g_1,\, g_2 \in C^{\infty}[-1,1]$.

\begin{corollary}
Suppose $f(x)$ is defined as (\ref{eq:boundsingularfunc}), then the Jacobi coefficients (\ref{eq:jacexpcoeffs})  satisfy that
\begin{equation}\label{Jacobiend}
   |a_{n}(\alpha,\beta)|=\left\{\begin{array}{ll}
      {\cal O}\left(n^{-\min\left\{\alpha+2\gamma+1,\beta+2\delta+1\right\}}\ln^{\mu-1}(n)\right),&\gamma,\delta \in{\cal N}_0\\
      {\cal O}\left(\min\left\{n^{-\alpha-2\gamma-1}\ln^{\mu-1}(n),n^{-\beta-2\delta-1}\ln^{\mu}(n)\right\}\right),&\gamma \in
      {\cal N}_0,\ \delta\notin{\cal N}_0\\
       {\cal O}\left(\min\left\{n^{-\beta-2\delta-1}\ln^{\mu}(n),n^{-\alpha-2\gamma-1}\ln^{\mu-1}(n)\right\}\right),&\gamma \notin
      {\cal N}_0,\ \delta\in{\cal N}_0\\
       {\cal O}\left(n^{-\min\left\{\alpha+2\gamma+1,\beta+2\delta+1\right\}}\ln^{\mu}(n)\right),&\gamma \notin
      {\cal N}_0,\ \delta\notin{\cal N}_0\\
      \end{array}\right.
\end{equation}
\end{corollary}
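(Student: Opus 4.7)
The plan is to derive this corollary as a direct consequence of Theorems~1 and~2 via the Tuan--Elliott splitting already recalled in the paragraph immediately preceding the statement. Writing
\[
 f(x)=(1-x)^\gamma\ln^\mu(1-x)g_1(x)+(1+x)^\delta\ln^\mu(1+x)g_2(x),
\]
with $g_1,g_2\in C^\infty[-1,1]$, the linearity of the inner product defining the Jacobi coefficient in (\ref{eq:jacexpcoeffs}) yields $a_n(\alpha,\beta)=a_n^{(1)}(\alpha,\beta)+a_n^{(2)}(\alpha,\beta)$, where $a_n^{(j)}$ is the Jacobi coefficient of the $j$-th summand. The hypotheses $\alpha+\gamma>-1$ and $\beta+\delta>-1$, required respectively in Theorems~1 and~2, are assumed throughout.

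The core step is then merely to invoke the two preceding theorems separately. Theorem~1 applied to $(1-x)^\gamma\ln^\mu(1-x)g_1(x)$ gives $|a_n^{(1)}(\alpha,\beta)|=\mathcal{O}(n^{-\alpha-2\gamma-1}\ln^\mu(n))$ when $\gamma\notin\mathcal{N}_0$ and $\mathcal{O}(n^{-\alpha-2\gamma-1}\ln^{\mu-1}(n))$ when $\gamma\in\mathcal{N}_0$. Symmetrically, Theorem~2 applied to $(1+x)^\delta\ln^\mu(1+x)g_2(x)$ yields $|a_n^{(2)}(\alpha,\beta)|=\mathcal{O}(n^{-\beta-2\delta-1}\ln^\mu(n))$ or $\mathcal{O}(n^{-\beta-2\delta-1}\ln^{\mu-1}(n))$ according to whether $\delta$ is or is not a nonnegative integer.

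Splitting into the four cases listed in (\ref{Jacobiend}) according to the integrality of $\gamma$ and $\delta$ and applying the triangle inequality $|a_n(\alpha,\beta)|\le|a_n^{(1)}|+|a_n^{(2)}|$, the aggregate bound is governed by whichever summand decays more slowly in both algebraic rate and log exponent. When both $\gamma$ and $\delta$ are nonnegative integers the two pieces share a $\ln^{\mu-1}(n)$ factor, so the combined bound collapses to $n^{-\min\{\alpha+2\gamma+1,\beta+2\delta+1\}}\ln^{\mu-1}(n)$; when neither is an integer both share $\ln^\mu(n)$ and one obtains the analogous expression. In the two mixed cases the two summands carry different log exponents, and the stated bound pairs each algebraic rate with the log power supplied by the corresponding theorem.

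No real analytic obstacle arises beyond what is already handled in Theorems~1 and~2; the argument is purely reduction and bookkeeping. The only place any care is needed is in correctly matching each algebraic rate with its proper log exponent in the two mixed cases, but this is read off directly from the two theorems above.
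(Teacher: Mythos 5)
Your proposal is correct and follows essentially the same route as the paper, which records the Tuan--Elliott splitting $f(x)=(1-x)^\gamma\ln^\mu(1-x)g_1(x)+(1+x)^\delta\ln^\mu(1+x)g_2(x)$ immediately before the corollary and leaves the combination of Theorems~1 and~2 by linearity and the triangle inequality implicit. The only caveat is that this argument yields the \emph{larger} (more slowly decaying) of the two contributions, so the $\min$ written in the two mixed cases of (\ref{Jacobiend}) should be read as a $\max$ --- consistently with $n^{-\min\{A,B\}}=\max\{n^{-A},n^{-B}\}$ in the other two cases --- and your bookkeeping already delivers exactly that bound.
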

where $\min\{\alpha+\gamma,\beta+\delta\}>-1$ and ${\cal N}_0$ is the set of  integers.

We illustrate the  decay rates  by $f(x)=(1-x)^{\gamma}\ln(1-x)$ with different values of
$\alpha$ and $\beta$ (see {\sc Fig.} \ref{fig:figs2}). These numerical results are in accordance with the estimates.  The  asymptotic orders of the decay on the coefficients are sharp.

\begin{figure}[hpbt]
      \centerline{\includegraphics[height=5cm,width=16cm]{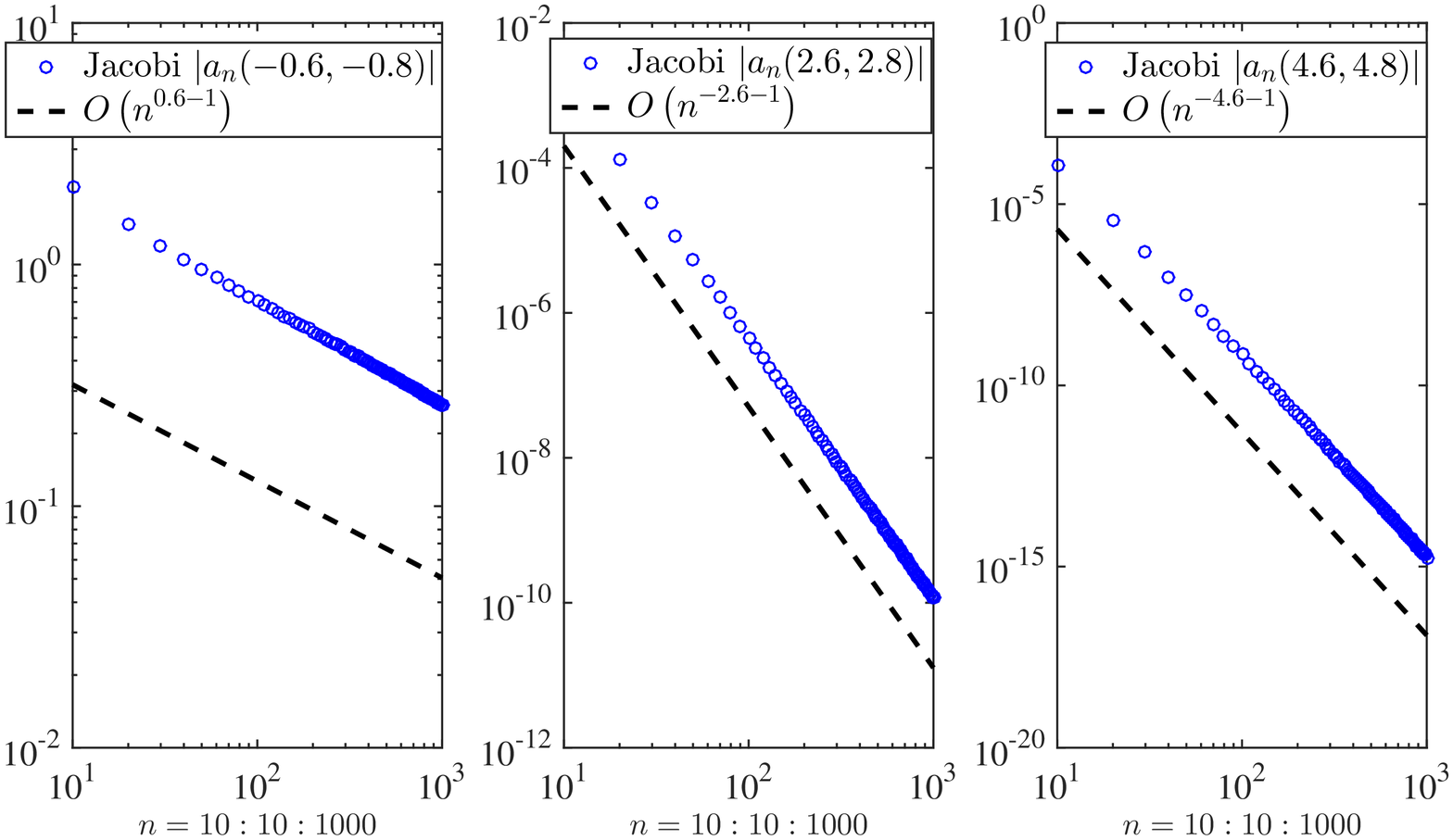}}
       \centerline{\includegraphics[height=5cm,width=16cm]{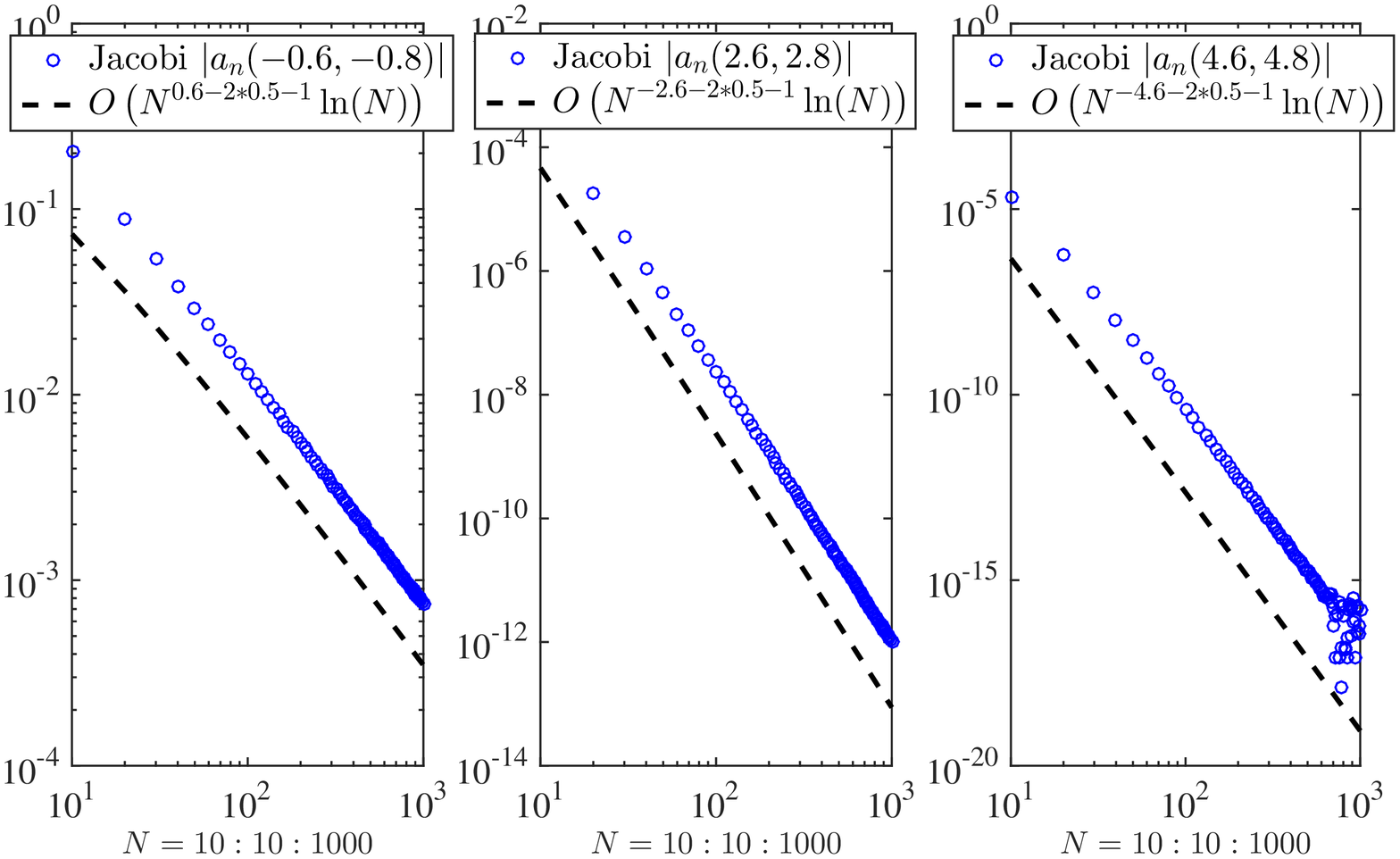}}
 \centerline{\includegraphics[height=5cm,width=16cm]{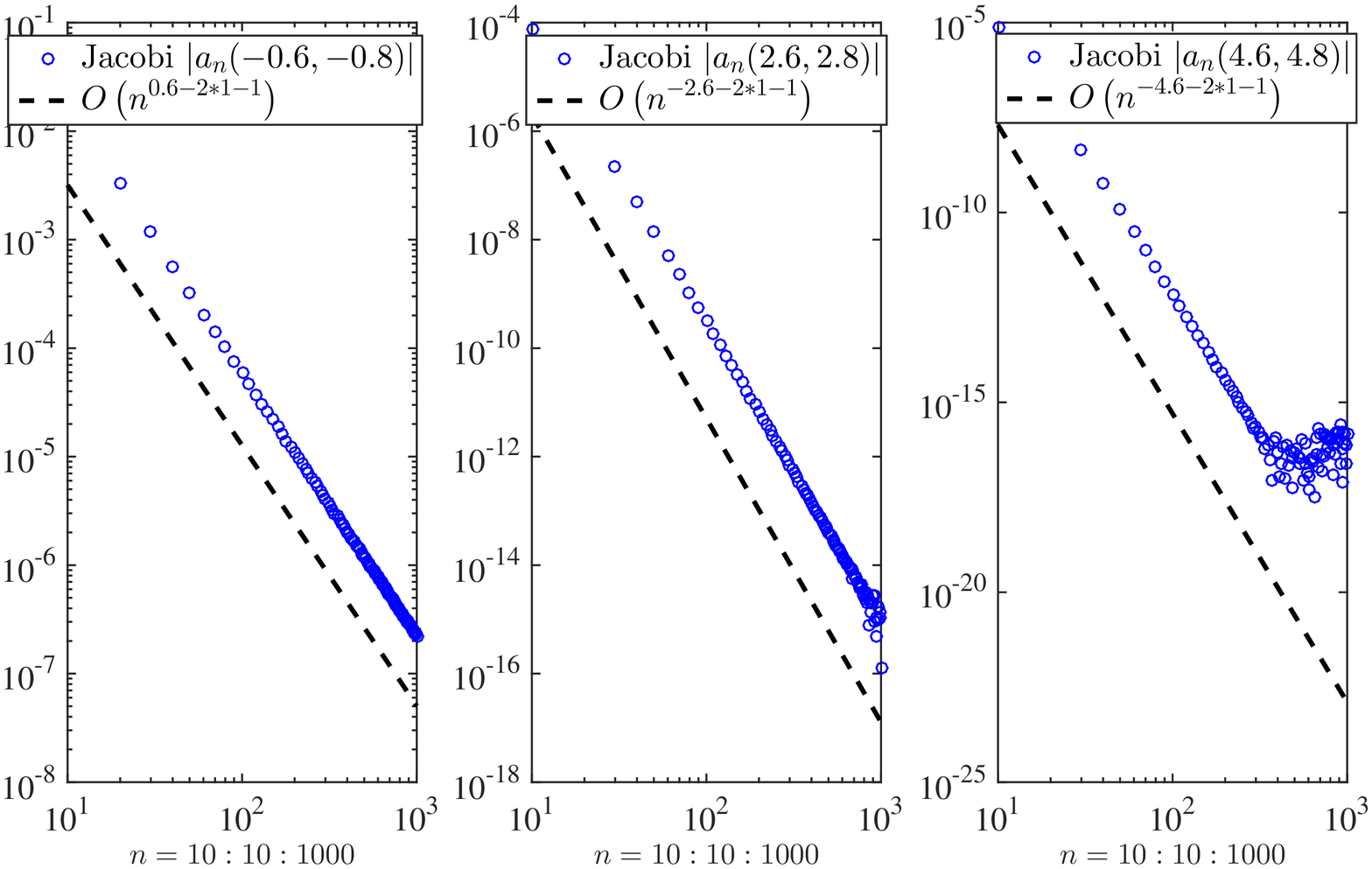}}
        \caption{The asymptotic decay of the Jacobi coefficients $|a_{n}(\alpha,\beta)|$ in (\ref{eq:jacexpan}) for  $f(x)=(1-x)^{\gamma}\ln(1-x)$ with $\gamma=0$ (first row), $\gamma=0.5$ (second row), $\gamma=1$ (third row),
       and different values  of $(\alpha,\beta)$.}\label{fig:figs2}
\end{figure}

\vspace{0.36cm}
One can expect that a sharp bounds for the Gegenbauer coefficients (\ref{eq:gegexpan}) and Chebyshev  coefficients (\ref{eq:chebexpan}) 
 from (\ref{eq:gegcoeffs}) and (\ref{eq:checoeffs}), respectively.

\begin{corollary}
The  Gegenbauer  and Chebyshev expansion coefficients for $f(x)$ are  satisfied
\begin{itemize}
\item  $f(x)=(1-x)^{\gamma}\ln^{\mu}(1-x)g(x)$ and $g\in C^{\infty}[-1,1]$:
\begin{equation}\label{Genend1}
  |a_{n}(\lambda)|=\left\{\begin{array}{ll}
  {\cal O}\left(\ln^{\mu}(n)n^{-2\lambda-2\gamma}\right),&\mbox{$\lambda+\gamma> -\frac{1}{2}$ and $\gamma\notin {\cal N}_0$}\\
  {\cal O}\left(\ln^{\mu-1}(n)n^{-2\lambda-2\gamma}\right),&\mbox{$\lambda+\gamma> -\frac{1}{2}$ and $\gamma\in {\cal N}_0$ },\end{array}\right.
  \end{equation}
\begin{equation}\label{Chebend1}
  |c_{n}|=\left\{\begin{array}{ll}
  {\cal O}\left(\ln^{\mu}(n)n^{-1-2\gamma}\right),&\mbox{ $\gamma> -\frac{1}{2}$ and $\gamma\notin {\cal N}_0$}\\
  {\cal O}\left(\ln^{\mu-1}(n)n^{-1-2\gamma}\right),&\mbox{$\gamma> -\frac{1}{2}$ and $\gamma\in {\cal N}_0$}.\end{array}\right.
\end{equation}

\item  $f(x)=(1+x)^{\delta}\ln^{\mu}(1+x)g(x)$  and $g\in C^{\infty}[-1,1]$:
\begin{equation}\label{Genend2}
  |a_{n}(\lambda)|=\left\{\begin{array}{ll}
  {\cal O}\left(\ln^{\mu}(n)n^{-2\lambda-2\delta}\right),&\mbox{$\lambda+\delta> -\frac{1}{2}$ and $\delta\notin {\cal N}_0$ }\\
  {\cal O}\left(\ln^{\mu-1}(n)n^{-2\lambda-2\delta}\right),&\mbox{$\lambda+\delta> -\frac{1}{2}$ and $\delta\in {\cal N}_0$},\end{array}\right.
  \end{equation}
  \begin{equation}\label{Chebend2}
    |c_{n}|=\left\{\begin{array}{ll}
  {\cal O}\left(\ln^{\mu}(n)n^{-1-2\delta}\right),&\mbox{$\delta> -\frac{1}{2}$ and $\delta\notin {\cal N}_0$ }\\
  {\cal O}\left(\ln^{\mu-1}(n)n^{-1-2\delta}\right),&\mbox{$\delta> -\frac{1}{2}$ and $\delta\in {\cal N}_0$}.\end{array}\right.
\end{equation}

\end{itemize}
\end{corollary}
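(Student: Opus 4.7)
The plan is to reduce Corollary~3 to Theorems~1 and~2 through the explicit Jacobi--Gegenbauer--Chebyshev coefficient transitions derived in (\ref{eq:gegcoeffs}) and (\ref{eq:checoeffs}), combined with Stirling's asymptotic formula for ratios of Gamma functions. Since (\ref{eq:gegcoeffs}) expresses $a_n(\lambda)$ as a Gamma-ratio multiple of the Jacobi coefficient $a_n(\lambda-\tfrac{1}{2},\lambda-\tfrac{1}{2})$, and (\ref{eq:checoeffs}) expresses $c_n$ similarly in terms of $a_n(-\tfrac{1}{2},-\tfrac{1}{2})$, it suffices to specialize Theorems~1--2 at $\alpha=\beta=\lambda-\tfrac{1}{2}$ and $\alpha=\beta=-\tfrac{1}{2}$ and multiply by the appropriate $n^{\kappa}$ factor.

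For (\ref{Genend1}), I would specialize Theorem~1 at $\alpha=\beta=\lambda-\tfrac{1}{2}$. The theorem's hypothesis $\alpha+\gamma>-1$ becomes exactly the stated condition $\lambda+\gamma>-\tfrac{1}{2}$, yielding
\[
\bigl|a_n\bigl(\lambda-\tfrac{1}{2},\lambda-\tfrac{1}{2}\bigr)\bigr|=\mathcal{O}\bigl(n^{-\lambda-\frac{1}{2}-2\gamma}\ln^{\mu}(n)\bigr)
\]
for $\gamma\notin\mathcal{N}_0$, with $\ln^{\mu-1}(n)$ in place of $\ln^{\mu}(n)$ when $\gamma\in\mathcal{N}_0$. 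By (\ref{eq:gegcoeffs}), $a_n(\lambda)$ equals $a_n(\lambda-\tfrac{1}{2},\lambda-\tfrac{1}{2})$ times $\Gamma(2\lambda)\Gamma(n+\lambda+\tfrac{1}{2})/[\Gamma(\lambda+\tfrac{1}{2})\Gamma(n+2\lambda)]$, and Stirling gives
\[
\frac{\Gamma(n+\lambda+\frac{1}{2})}{\Gamma(n+2\lambda)}=\mathcal{O}\bigl(n^{\frac{1}{2}-\lambda}\bigr),
\]
so that the product has $n$-exponent $(-\lambda-\tfrac{1}{2}-2\gamma)+(\tfrac{1}{2}-\lambda)=-2\lambda-2\gamma$, establishing (\ref{Genend1}).

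For (\ref{Chebend1}) I would instead specialize Theorem~1 at $\alpha=\beta=-\tfrac{1}{2}$ (requiring $\gamma>-\tfrac{1}{2}$), obtaining $|a_n(-\tfrac{1}{2},-\tfrac{1}{2})|=\mathcal{O}(n^{-\frac{1}{2}-2\gamma}\ln^{\mu}(n))$ (or $\ln^{\mu-1}(n)$ in the integer case), and then apply (\ref{eq:checoeffs}) together with
\[
\frac{\Gamma(n+\tfrac{1}{2})}{n\,\Gamma(\tfrac{1}{2})\,\Gamma(n)}=\mathcal{O}\bigl(n^{-\frac{1}{2}}\bigr),
\]
again by Stirling, which produces $|c_n|=\mathcal{O}(n^{-1-2\gamma}\ln^{\mu}(n))$. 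The bounds (\ref{Genend2}) and (\ref{Chebend2}) for the left-endpoint singularity $(1+x)^{\delta}\ln^{\mu}(1+x)g(x)$ follow by the same two steps with Theorem~2 substituted for Theorem~1; the symmetry of the specialization $\alpha=\beta$ guarantees that the left endpoint contributes identically.

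I do not foresee any genuine obstacle: the argument is an essentially mechanical combination of Theorems~1--2 with two Gamma-function asymptotics. The only points that require a moment's care are (i)~checking that the parameter conditions translate correctly under the substitutions $\alpha=\beta=\lambda-\tfrac{1}{2}$ and $\alpha=\beta=-\tfrac{1}{2}$, and (ii)~keeping track of when $\gamma$ (resp.\ $\delta$) lies in $\mathcal{N}_0$, so as to retain the sharper $\ln^{\mu-1}$ factor inherited from (\ref{Jacobiend1}) and (\ref{Jacobiend2}) in that case.
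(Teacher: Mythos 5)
Your proposal is correct and follows exactly the route the paper intends: the paper gives no separate proof of this corollary beyond remarking that it follows from the coefficient relations (\ref{eq:gegcoeffs}) and (\ref{eq:checoeffs}), and your specialization of Theorems~1--2 at $\alpha=\beta=\lambda-\tfrac{1}{2}$ and $\alpha=\beta=-\tfrac{1}{2}$ combined with the Stirling asymptotics $\Gamma(n+a)/\Gamma(n+b)\sim n^{a-b}$ is precisely that argument, with the parameter conditions and the $\ln^{\mu}$ versus $\ln^{\mu-1}$ bookkeeping handled correctly.
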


{\sc Remark 2}
(i) Theorems 1-2 and Corollaries 1-2 show that  for functions of endpoint singularities, the decay of the coefficients  in a Legendre polynomial series has the same asymptotic order as the Chebyshev.

(ii) One may get faster convergence rate by increasing $(\alpha,\beta)$ for  endpoint singularities. In the case $\min\{\alpha,\beta\}>0$ or $\lambda>\frac{1}{2}$, the decay order of coefficients in a Jacobi or  Gegenbauer series is faster than
that in a Legendre or Chebyshev series.

Numerical results for these estimates on the two end points about the Chebyshev, Legendre and Jacobi expansion coefficients are illustrated in {\sc Figs}. \ref{fig:figs3} and  \ref{fig:figs4}, which indicates the optimal orders of the estimates. To get more clearly, in
the second row of {\sc Figs}. \ref{fig:figs3}, we consider $n=10:10:2000$.

\begin{figure}[hpbt]
      \centerline{\includegraphics[height=5cm,width=16cm]{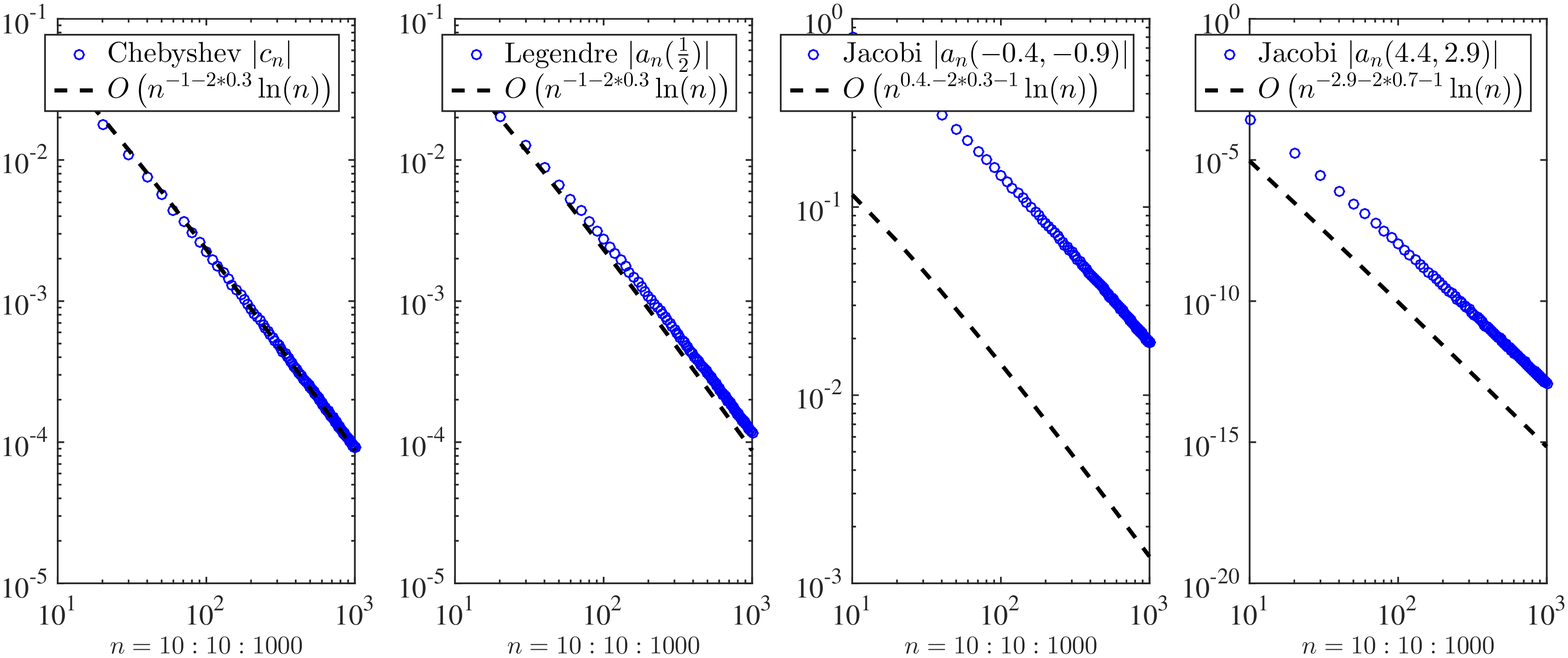}}
       \centerline{\includegraphics[height=5cm,width=16cm]{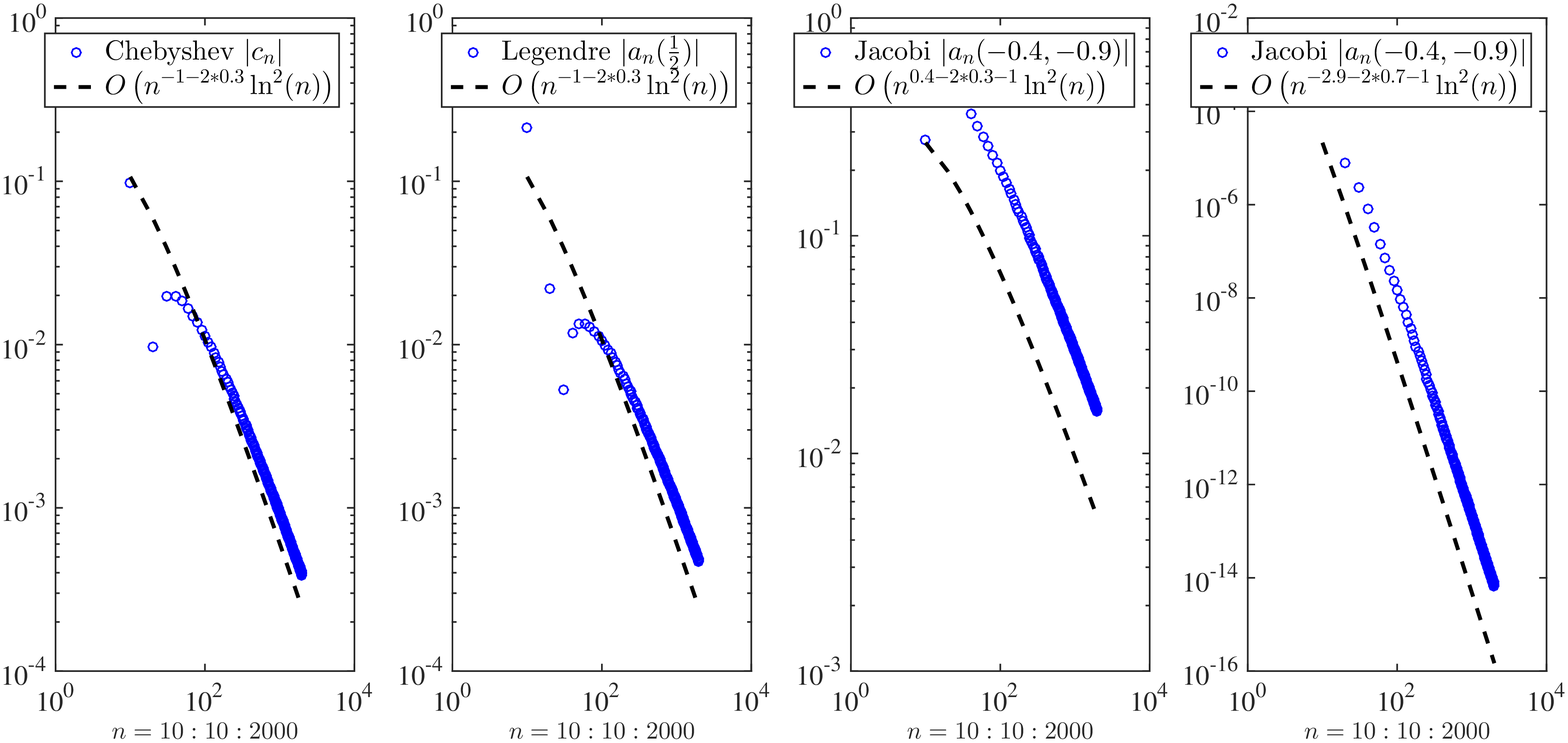}}
      \caption{The asymptotic decay of the Chebyshev, Legendre and Jacobi expansion coefficients  for  $f(x)=(1-x)^{0.3}(1+x)^{0.7}\ln^{\mu}(1-x^2)\sin{x}$ with different values of $(\alpha,\beta)$, respectively.: $\mu=1$ (first row) and $\mu=2$ (second row).}\label{fig:figs3}
\end{figure}

\begin{figure}[hpbt]
      \centerline{\includegraphics[height=5cm,width=16cm]{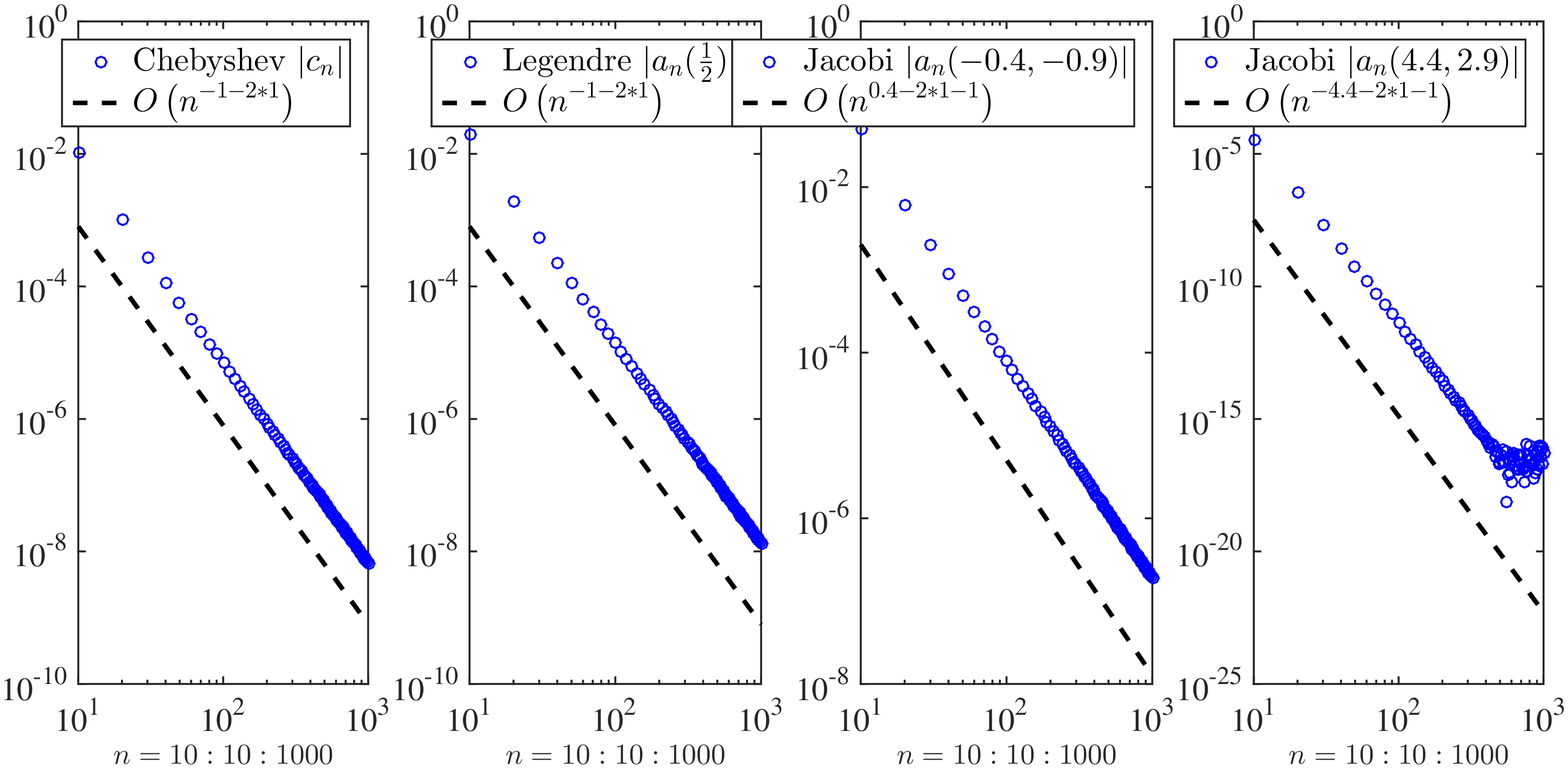}}
        \centerline{\includegraphics[height=5cm,width=16cm]{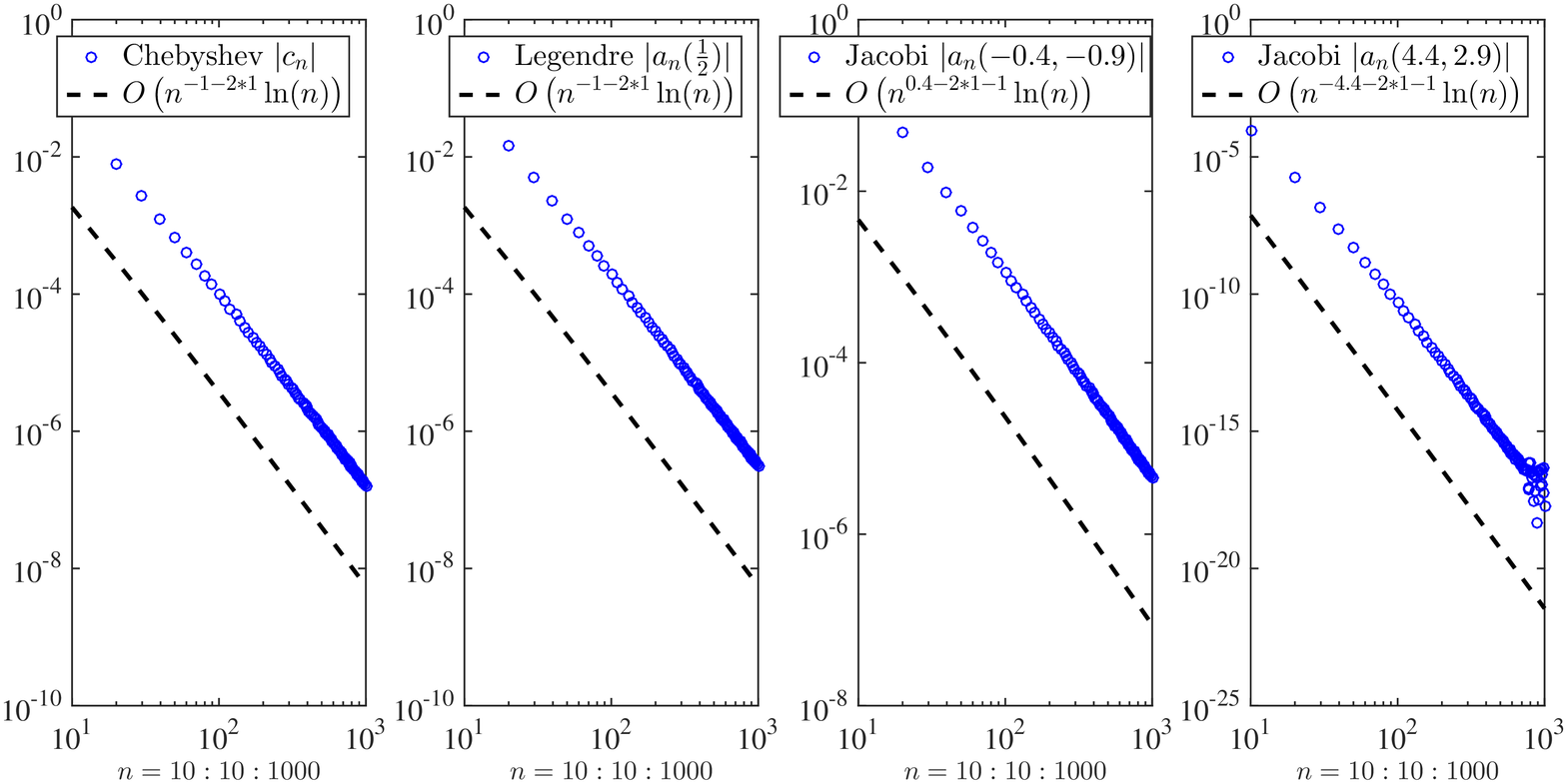}}
      \caption{The asymptotic decay of the Chebyshev, Legendre and Jacobi expansion coefficients  for  $f(x)=(1-x)^{1}(1+x)^{2}\ln^{\mu}(1-x^2)\sin{x}$ with different values of $(\alpha,\beta)$, respectively: $\mu=1$ (first row) and $\mu=2$ (second row).}\label{fig:figs4}
\end{figure}

\subsection{Functions with interior regularities}
Let us  consider
\begin{equation}\label{eq:intsingularfunc}
f(x)=|x-z_0|^s\ln^{\mu}|z-z_0|g(x),\quad z_{0}\in(-1,1)
\end{equation}
with $g\in C^{\infty}[-1,1]$, $s> 0$ a real number and  $\mu$ a positive integer.

\begin{theorem}
  Suppose that $f(x)$ is defined as (\ref{eq:intsingularfunc}), then the Jacobi coefficients (\ref{eq:jacexpcoeffs})  satisfy that
  \begin{equation}\label{jacasy}
    |a_{n}(\alpha,\beta)|={\cal O}\left(\ln^{\mu}(n)n^{-s-\frac{1}{2}}\right),\quad\mathrm{as}\ n\to\infty.
  \end{equation}
\end{theorem}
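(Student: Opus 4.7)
The plan is to adapt the Rodrigues-plus-Hilb strategy used in Theorem 1, but now handle the fact that the singularity sits at an interior point $z_{0}\in(-1,1)$ rather than at an endpoint. The key gain from Rodrigues' formula is not the smoothness of $f$ (which is limited by the singularity at $z_{0}$), but the extra prefactor $[n(n-1)\cdots(n-k_{0}+1)]^{-1}$ that will suppress the $O(\tilde N^{-3/2})$ Hilb remainder and match it to the desired $O(n^{-s-1/2}\ln^{\mu}n)$.

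First, I would pick an integer $k_{0}$ with $s\le k_{0}<s+1$, so that the transformed function $f^{(k_{0})}(x)$ is absolutely integrable yet still carries the full singular structure. Applying Rodrigues' formula $k_{0}$ times gives
$$a_{n}(\alpha,\beta)=\frac{(-1)^{k_{0}}\int_{-1}^{1}(1-x)^{\alpha+k_{0}}(1+x)^{\beta+k_{0}}P_{n-k_{0}}^{(\alpha+k_{0},\beta+k_{0})}(x)f^{(k_{0})}(x)\,\mathrm{d}x}{2^{k_{0}}\sigma_{n}^{\alpha,\beta}\,n(n-1)\cdots(n-k_{0}+1)},$$
and the Leibniz rule shows that
$f^{(k_{0})}(x)=|x-z_{0}|^{s-k_{0}}\sum_{i=0}^{\mu}\ln^{i}|x-z_{0}|\,h_{i}(x)$
with each $h_{i}\in C^{\infty}[-1,1]$, plus lower-order pieces that are handled identically or more easily.

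Setting $x=\cos\theta$ and $\theta_{0}=\arccos z_{0}\in(0,\pi)$ (WLOG $\theta_{0}\le\pi/2$), I would split the integral at $\pi/2$. On $[\pi/2,\pi]$ the integrand is smooth and the symmetric Hilb formula (with $\alpha\leftrightarrow\beta$, $\theta\mapsto\pi-\theta$) combined with Lemma~\ref{lemma7} delivers $O(n^{-3/2})$, which is harmless. On $[0,\pi/2]$, apply the Hilb formula of Lemma~\ref{lem:lemma1} to replace $P_{n-k_{0}}^{(\alpha+k_{0},\beta+k_{0})}(\cos\theta)$ by its Bessel main term plus an $O(\theta^{1/2}\tilde N^{-3/2})$ error whose contribution is $O(n^{-3/2})$. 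After absorbing the Jacobi-weight factors, the task reduces to bounding
$$\int_{0}^{\pi/2}\theta^{\alpha+k_{0}+1}|\theta-\theta_{0}|^{s-k_{0}}\ln^{i}|\theta-\theta_{0}|\,H(\theta)\,J_{\alpha+k_{0}}(\tilde N\theta)\,\mathrm{d}\theta,$$
using $|\cos\theta-\cos\theta_{0}|\asymp|\theta-\theta_{0}|$ near $\theta_{0}$ to transfer the singularity to the new variable. Splitting at $\theta_{0}$, the left piece $\int_{0}^{\theta_{0}}$ is exactly of the form treated by Lemma~\ref{lemma7} (or its shifted analogue), with the endpoint exponent $\beta=s-k_{0}$; it produces $O(\ln^{\mu}(n)n^{-(s-k_{0})-3/2})$ once $k_{0}$ is large enough that $\alpha+k_{0}+2\ge s-k_{0}+3/2$.

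The main obstacle is the remaining piece $\int_{\theta_{0}}^{\pi/2}$, where the algebraic-logarithmic singularity sits at the origin of the shifted variable $t=\theta-\theta_{0}$ but the Bessel function carries a phase $\tilde N(\theta_{0}+t)$ rather than $\tilde N t$, so the van der Corput lemmas of Section~2 do not apply directly. My plan here is to invoke the large-argument expansion $J_{\alpha+k_{0}}(z)=\sqrt{2/(\pi z)}\cos(z-(\alpha+k_{0})\pi/2-\pi/4)+O(z^{-3/2})$, valid uniformly for $z\ge\tilde N\theta_{0}\gg 1$, thereby converting the integral to
$$\sqrt{\frac{2}{\pi\tilde N}}\int_{0}^{\pi/2-\theta_{0}}t^{s-k_{0}}\ln^{i}(t)\,\tilde H(t)\cos(\tilde N t+\varphi)\,\mathrm{d}t+O(\tilde N^{-3/2}),$$
and then estimating the remaining cosine oscillatory integral by classical integration-by-parts (or a direct van der Corput argument on algebraic-log endpoint singularities), yielding $O(\ln^{\mu}(n)n^{-(s-k_{0})-1})$. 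Combining with the $\tilde N^{-1/2}$ amplitude reproduces the same $O(\ln^{\mu}(n)n^{-(s-k_{0})-3/2})$ bound as the left piece. Finally, multiplying through by the Rodrigues prefactor $\sigma_{n}^{-1}/[n(n-1)\cdots(n-k_{0}+1)]=O(n^{1-k_{0}})$ gives $|a_{n}(\alpha,\beta)|=O(\ln^{\mu}(n)n^{-s-1/2})$, as claimed.
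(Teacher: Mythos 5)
Your argument is correct and rests on the same skeleton as the paper's proof (Rodrigues' formula with $k_0\approx s$ derivatives, the Hilb formula, then a van der Corput estimate for a Bessel transform with an algebraic--logarithmic endpoint singularity), but the decomposition differs in one genuine way. The paper splits the original $x$-integral at $x=z_0$ into $\int_{-1}^{z_0}+\int_{z_0}^{1}$; after $x=\cos\theta$ each half becomes a Bessel transform over $[0,\theta_0]$ (respectively, after reflection $\theta\mapsto\pi-\theta$, over $[0,\pi-\theta_0]$ with $J_{\beta+k_0}$), so the singularity always sits at the \emph{far} endpoint $b$ of a $\int_0^b(\cdots)J_\nu(\tilde N\theta)\,d\theta$, which is exactly the configuration of Lemma~9 — note it is Lemma~9, not Lemma~7, that covers your ``left piece,'' since the logarithm is attached to $b-\theta$ rather than to $\theta$. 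Because of this reflection the paper never encounters the piece you identify as ``the main obstacle.'' Your alternative — splitting $[0,\pi/2]$ at $\theta_0$ and treating $\int_{\theta_0}^{\pi/2}$ by the large-argument expansion $J_\nu(z)=\sqrt{2/(\pi z)}\cos(z-\nu\pi/2-\pi/4)+{\cal O}(z^{-3/2})$ followed by a classical Fourier-type estimate for $\int_0^a t^{s-k_0}\ln^i(t)\,\tilde H(t)\cos(\tilde Nt+\varphi)\,dt$ — is sound (the exponent $s-k_0\in(-1,0]$ keeps everything integrable and the resulting ${\cal O}(\ln^{i}(\tilde N)\tilde N^{-(s-k_0)-3/2})$ matches the left piece), and it is essentially a hands-on version of what the proof of Lemma~5 does internally near the far endpoint; what the paper's route buys is that both halves are dispatched by a single quotable lemma, at the cost of having to invoke the reflected Hilb formula for the $[\theta_0,\pi]$ half. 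One cosmetic point: your condition ``once $k_0$ is large enough that $\alpha+k_0+2\ge s-k_0+3/2$'' is not a free choice, since $k_0$ is pinned by $s\le k_0<s+1$; fortunately it holds automatically because $k_0\ge 1$ and $\alpha>-1$.
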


\begin{proof}
 Note that
  \begin{equation}\label{coejac}\quad\quad
    a_{n}(\alpha,\beta)=\frac{1}{\sigma_{n}^{\alpha,\beta}}\left[\int_{-1}^{z_0}+\int_{z_0}^{1}\right](1-x)^{\alpha}(1+x)^{\beta}P_{n}^{(\alpha,\beta)}(x)f(x)\mathrm{d}x.
  \end{equation}
  Without loss of generality, here we consider only the second integral in (\ref{coejac}). 
  
  Let $k_0=\lfloor s \rfloor$,  the greatest integer less than or equals to $s$. Since 
 $$\begin{array}{ll}
f^{(k_0)}(x)&=(x-z_0)^{s-k_0}\left[\ln^{\mu}(x-z_0)h_{\mu}(x)+\ln^{\mu-1}(1-x)h_{\mu-1}(x)\right.\\
&\quad \left.+\cdots+\ln(x-z_0)h_{1}(x)+h_0(x)\right]\\
&=:(x-z_0)^{s-k_0}\psi(x)\end{array}$$
with $h_j\in C^{\infty}[z_0,1],\, j=0,1,\ldots,\mu$.

 Applying Rodrigues' formula  follows
 $$
 I_1= {\displaystyle  \frac{\int_{-1}^{1}\!(1-x)^{\alpha+k_0}(1+x)^{\beta+k_0}P_{n-k_0}^{(\alpha+k_0,\beta+k_0)}(x)
  f^{(k_0)}(x)\,\mathrm{d}x}{(-1)^{k_0}2^{k_0}\sigma_{n}^{\alpha,\beta}n(n-1)\cdots(n-k_0+1)}}.
$$
 For simplicity, we only consider the first term  in $f^{(k_0)}(x)$ denoted by $\varphi_1(x)=(x-z_{0})^{s-k_0}\ln^{\mu}(x-z_0)\psi_1(x)$. Similar proof can be directly applied to estimate the other terms.  Without loss of generality,
 assume $f^{(k_0)}(x)=\varphi_1(x)=(x-z_{0})^{s-k_0}\ln^{\mu}(x-z_0)\psi_1(x)$.
 
In the case $s=k_0 $ is a positive integer:    Setting $x=\cos{\theta}$ and $\theta_0=\arccos{z_0}$, together with Lemma \ref{lem:lemma1}, analogously to the proofs of Theorem 1,  
 it indicates that  by Lemma 9 
$$\begin{array}{lll}
    I_1&=& {\displaystyle  \frac{\int_{0}^{\theta_0}\!(1-\cos{\theta})^{\alpha+k_0}(1+\cos{\theta})^{\beta+k_0}
    P_{n-k_0}^{(\alpha+k_0,\beta+k_0)}(\cos{\theta})\varphi_1(\cos\theta)\sin{\theta}\,\mathrm{d}\theta}{(-1)^{k_0}2^{k_0}\sigma_{n}^{\alpha,\beta}n(n-1)\cdots(n-k_0+1)}}\\
 &=& {\displaystyle \frac{\Gamma(n+\alpha+1)\int_{0}^{\theta_{0}}
    \left(\frac{\theta}{2}\right)^{\frac{1}{2}}\sin^{\alpha+k_0+\frac{1}{2}}\frac{\theta}{2}\cos^{\beta+k_0+\frac{1}{2}}\frac{\theta}{2}
    J_{\alpha+k_0}({\tilde N}\theta)\varphi_1(\cos\theta)\,\mathrm{d}\theta}{(-1)^{k_0}2^{-\alpha-\beta-1-k_0}(n-k_0)!{\tilde N}^{\alpha+k_0}\sigma_{n}^{\alpha,\beta}{n(n-1)\cdots(n-k_0+1)}}} \\
    & &\displaystyle +{\cal O}({\tilde N}^{-3/2-k_0+1})\\
    &=&{\displaystyle \frac{(-1)^{k_0}2^{\alpha+\beta+1+k_0}\Gamma(n+\alpha+1)}{n!{\tilde N}^{\alpha+k_0}\sigma_{n}^{\alpha,\beta}}
                           \int_{0}^{\theta_0}\theta^{\alpha+k_0+1}(\theta_0-\theta)^{s-k_0}\left\{\ln^{\mu}(\theta_0-\theta)\hat\varphi_{\mu}(\theta)\right.}\\
                           &&{\displaystyle \left.+\cdots+\ln(\theta_0-\theta)\hat\varphi_{1}(\theta)+\hat\varphi_0(\theta)\right\}
                           J_{\alpha+k_0}({\tilde N}\theta)\,\mathrm{d}\theta+O({\tilde N}^{-\frac{1}{2}-k_0})}\\
                           &=&{\displaystyle  {\cal O}\left(\ln^{\mu}(n)n^{-s-\frac{1}{2}}\right) +\cdots+{\cal O}\left(\ln(n)n^{-s-\frac{1}{2}}\right) +{\cal O}\left(n^{-s-\frac{1}{2}}\right)     +O(n^{-\frac{1}{2}-k_0})             }\\
   &=&{\displaystyle  {\cal O}\left(\ln^{\mu}(n)n^{-s-\frac{1}{2}}\right)} \end{array}
  $$
due to $\alpha+k_0+2>\frac{3}{2}$, where ${\tilde N}=n+(\alpha+\beta+1)/2$ and $\hat\varphi_j(\theta) \in C^{\infty}[0,\theta_0]$ for $j=0,1,\ldots,\mu$. 

In the case that $s>0$ is not an integer: By integrating by parts once again, it follows
 $$\begin{array}{lll}
    I_1&=& {\displaystyle  \frac{\int_{0}^{\theta_0}\!(1-\cos{\theta})^{\alpha+k_0+1}(1+\cos{\theta})^{\beta+k_0+1}
    P_{n-k_0-1}^{(\alpha+k_0+1,\beta+k_0+1)}(\cos{\theta})\varphi_1^{\prime}(\cos\theta)\sin{\theta}\,\mathrm{d}\theta}{(-1)^{k_0+1}2^{k_0+1}\sigma_{n}^{\alpha,\beta}n(n-1)\cdots(n-k_0)}}\\
 &=& {\displaystyle \frac{\Gamma(n+\alpha+1)\int_{0}^{\theta_{0}}
    \left(\frac{\theta}{2}\right)^{\frac{1}{2}}\sin^{\alpha+k_0+\frac{3}{2}}\frac{\theta}{2}\cos^{\beta+k_0+\frac{3}{2}}\frac{\theta}{2}
    J_{\alpha+k_0+1}({\tilde N}\theta)\varphi_1^{\prime}(\cos\theta)\,\mathrm{d}\theta}{(-1)^{k_0+1}2^{-\alpha-\beta-2-k_0}(n-k_0-1)!{\tilde N}^{\alpha+k_0}\sigma_{n}^{\alpha,\beta}{n(n-1)\cdots(n-k_0)}}} \\
    & &\displaystyle +{\cal O}({\tilde N}^{-3/2-k_0})\\
    &=&{\displaystyle \frac{(-1)^{k_0+1}2^{\alpha+\beta+2+k_0}\Gamma(n+\alpha+1)}{n!{\tilde N}^{\alpha+k_0+1}\sigma_{n}^{\alpha,\beta}}
                           \int_{0}^{\theta_0}\theta^{\alpha+k_0+2}(\theta_0-\theta)^{s-k_0-1}\left\{\ln^{\mu}(\theta_0-\theta)\hat\varphi_{\mu}(\theta)\right.}\\
                           &&{\displaystyle \left.+\cdots+\ln(\theta_0-\theta)\hat\varphi_{1}(\theta)+\hat\varphi_0(\theta)\right\}
                           J_{\alpha+k_0+1}({\tilde N}\theta)\,\mathrm{d}\theta+O({\tilde N}^{-\frac{3}{2}-k_0})}\\
                           &=&{\displaystyle  {\cal O}\left(\ln^{\mu}(n)n^{-s-\frac{1}{2}}\right) +\cdots+{\cal O}\left(\ln(n)n^{-s-\frac{1}{2}}\right) +{\cal O}\left(n^{-s-\frac{1}{2}}\right)     +O(n^{-\frac{3}{2}-k_0})             }\\
   &=&{\displaystyle  {\cal O}\left(\ln^{\mu}(n)n^{-s-\frac{1}{2}}\right)}, \end{array}
  $$
 by Lemma \ref{lemma9}  and  $\alpha+2k_0+3>k_0+\frac{3}{2}>s+\frac{1}{2}$.

    Similar results can be derived for the first integral in the right hand side of (\ref{coejac}). These lead to the desired result (\ref{jacasy}).

\end{proof}

{\sc Remark 3}.
  In the case $-1<s\le 0$, in analogy to the above proof without integrating by parts, from Lemma 9 we get that
   \begin{equation}\label{jacasy2}
    |a_{n}(\alpha,\beta)|={\cal O}\left(\max\left\{\ln^{\mu}(n)n^{-s-\frac{1}{2}},n^{-\min\{1+\alpha,1+\beta\}}\right\}\right),\quad\mathrm{as}\ n\to\infty.
  \end{equation}
  

\begin{corollary}
 Suppose that $f(x)$ is defined as (\ref{eq:intsingularfunc}), then for the  Gegenbauer  and Chebyshev expansion coefficients, the following holds
\begin{equation}\label{Genint1}
  |a_{n}(\lambda)|=
  {\cal O}\left(\ln^{\mu}(n)n^{-\lambda-s}\right),
 \end{equation}
\begin{equation}\label{Chebint1}
    |c_{n}|=
  {\cal O}\left(\ln^{\mu}(n)n^{-1-s}\right).
  \end{equation}
\end{corollary}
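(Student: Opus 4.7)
The target is Corollary 4, which states decay rates for the Gegenbauer and Chebyshev coefficients of the interior-singular function $f(x)=|x-z_0|^s\ln^{\mu}|x-z_0|g(x)$. My plan is to reduce everything to the Jacobi estimate of Theorem 3 via the identities (\ref{eq:gegcoeffs}) and (\ref{eq:checoeffs}), together with the standard asymptotic $\Gamma(n+a)/\Gamma(n+b)\sim n^{a-b}$ as $n\to\infty$.

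For the Gegenbauer case, I would first specialize Theorem 3 to the symmetric Jacobi parameters $\alpha=\beta=\lambda-\tfrac{1}{2}$, which yields
\[
\bigl|a_{n}(\lambda-\tfrac{1}{2},\lambda-\tfrac{1}{2})\bigr|={\cal O}\!\left(\ln^{\mu}(n)\,n^{-s-\tfrac{1}{2}}\right).
\]
Plugging this into (\ref{eq:gegcoeffs}) and using the Gamma-ratio asymptotic
\[
\frac{\Gamma(n+\lambda+\tfrac{1}{2})}{\Gamma(n+2\lambda)}\sim n^{\tfrac{1}{2}-\lambda},\qquad n\to\infty,
\]
the prefactor contributes an extra $n^{1/2-\lambda}$, which combines with $n^{-s-1/2}$ to give the claimed rate $n^{-\lambda-s}$.

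For the Chebyshev case, I would set $\alpha=\beta=-\tfrac{1}{2}$ in Theorem 3 to obtain $|a_{n}(-\tfrac{1}{2},-\tfrac{1}{2})|={\cal O}(\ln^{\mu}(n)n^{-s-1/2})$, and then invoke (\ref{eq:checoeffs}). The Gamma-ratio factor
\[
\frac{\Gamma(n+\tfrac{1}{2})}{n\,\Gamma(\tfrac{1}{2})\Gamma(n)}\sim \frac{1}{\sqrt{\pi}}\,n^{-\tfrac{1}{2}}
\]
supplies an additional $n^{-1/2}$, yielding $|c_{n}|={\cal O}(\ln^{\mu}(n)n^{-1-s})$ as stated.

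There is no real obstacle here beyond bookkeeping: the whole estimate follows once Theorem 3 is in hand, because (\ref{eq:gegcoeffs}) and (\ref{eq:checoeffs}) convert Jacobi coefficients into Gegenbauer/Chebyshev coefficients up to explicit Gamma ratios whose $n\to\infty$ behavior is elementary. The only point one should be careful with is ensuring $\lambda>-\tfrac{1}{2}$, $\lambda\neq 0$ (so that the identities are valid and the Gegenbauer polynomials are well defined) and that the Stirling estimate is applied uniformly in $n$, both of which are harmless. Consequently the proof can be written in essentially two short lines, one for each assertion, citing Theorem 3 and the Gamma-ratio asymptotic.
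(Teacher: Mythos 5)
Your proposal is correct and follows exactly the route the paper intends: Corollary~3 is obtained by specializing Theorem~3 to $\alpha=\beta=\lambda-\frac{1}{2}$ (resp.\ $\alpha=\beta=-\frac{1}{2}$) and converting via the coefficient identities (\ref{eq:gegcoeffs}) and (\ref{eq:checoeffs}) with the Gamma-ratio asymptotic $\Gamma(n+a)/\Gamma(n+b)\sim n^{a-b}$. The bookkeeping ($n^{-s-1/2}\cdot n^{1/2-\lambda}=n^{-\lambda-s}$ and $n^{-s-1/2}\cdot n^{-1/2}=n^{-1-s}$) checks out, so no further comment is needed.
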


{\sc Remark 4}.   (i) For functions of interrior singularities,  the decay order of the coefficients  in a Jacobi polynomial series is independent of $(\alpha,\beta)$ if $s>0$. 

(ii) The decay of the coefficients  in a Chebyshev polynomial series has the highest asymptotic order. {\sc Fig.} \ref{fig:figs5} illustrates this phenomenon exactly.  
However, in Section 4, we will see that all these spectral expansions have the same convergence order.

\begin{figure}[hpbt]
      \centerline{\includegraphics[height=5cm,width=15cm]{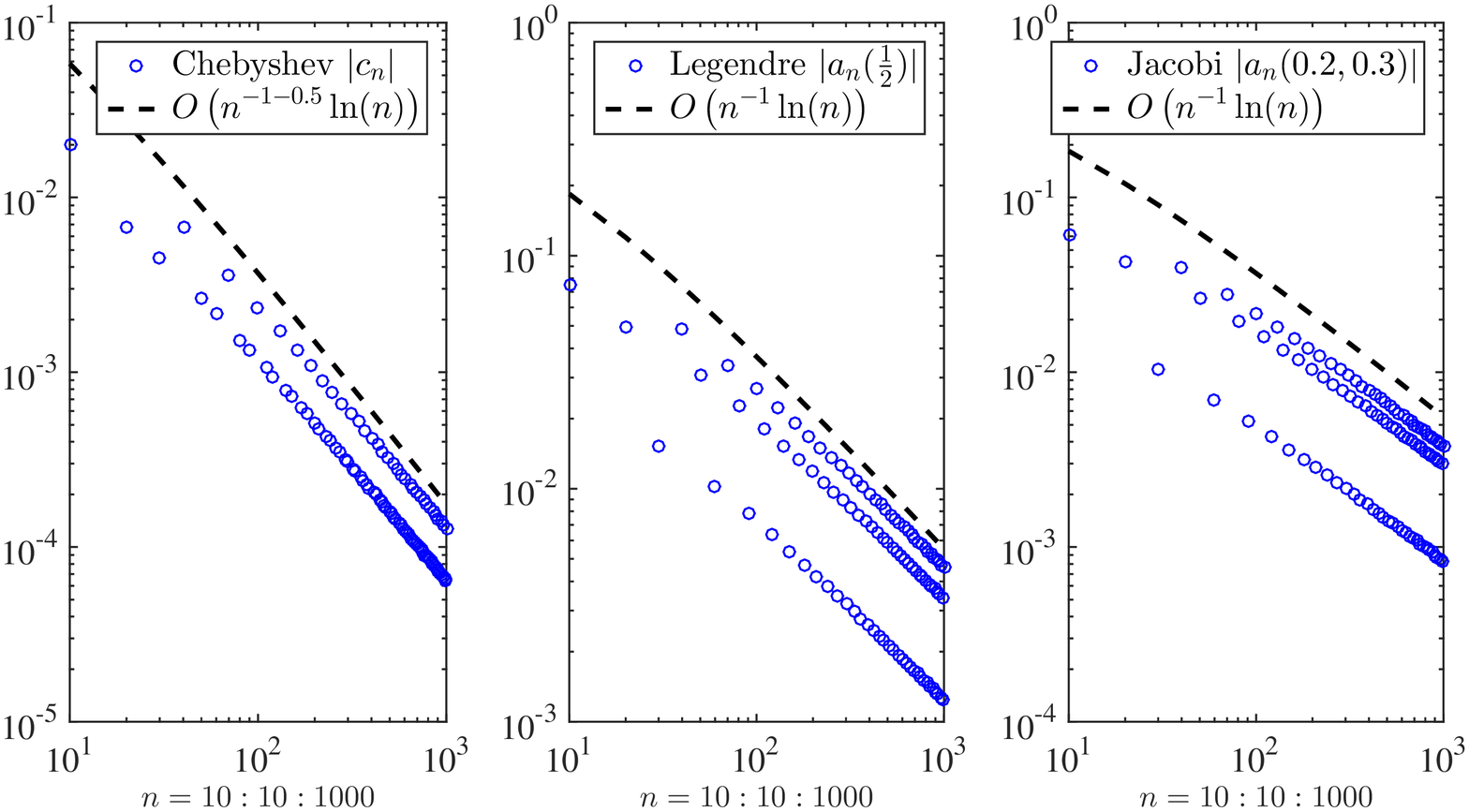}}
        \centerline{\includegraphics[height=5cm,width=15cm]{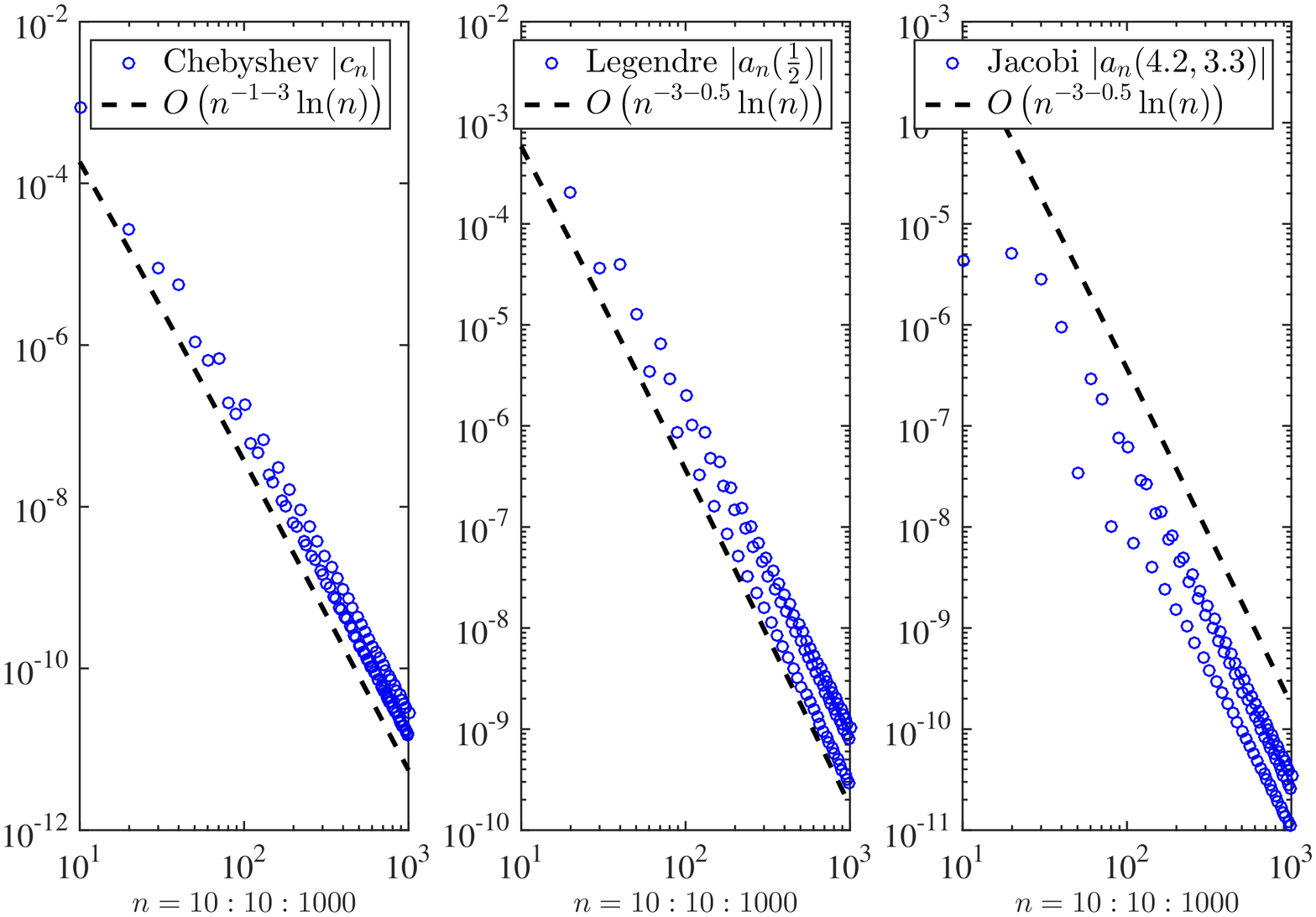}}
      \caption{The asymptotic decay of the Chebyshev, Legendre Jacobi coefficients  for  $f(x)=|x-0.5|^{s}\ln|x-0.5|\cos{x}$ with $s=0.5$ and $s=3$, respectively: $n=10:10:1000$.}\label{fig:figs5}
\end{figure}

{\sc Remark 5}. From the proofs, we see that  all the above thereoms  and corollaries still hold for $\mu=0$ \cite{XiangLiu} except for $s$ even. In the case $\mu=0$ and $s$ even, $a_n(\alpha,\beta)$,  $a_{n}(\lambda)$ and $c_n$ are exponentially decayed \cite{Bernstein,Trefethen,Wang1,Wang2,Xiang2012,XWZ,ZWX}.

\section{The convergence rates on the spectral orthogonal projections}

For $(x)=(1-x)^{\gamma}\ln^{\mu}(1-x)g(x)$ or $f(x)=(1+x)^{\delta}\ln^{\mu}(1+x)g(x)$, it is easy to verify that $f\in L_{\omega}^{2}[-1,1]$ if $\min\{\alpha+2\gamma,\beta+2 \delta\}>-1$. For $f(x)=|x-z_0|^s\ln^{\mu}|z-z_0|g(x)$, 
 $f\in L_{\omega}^{2}[-1,1]$ if $s>-\frac{1}{2}$.
 
\begin{theorem}\label{thm41}
Suppose that $f$ is of algebraic and logarithmatic regularity at an endpoint or  interior point, then for the Jacobi expansion, it follows that
\begin{equation}
  \|f-\mathcal{P}_{N}^{f,Ja}\|_{L_{\omega}^{2}[-1,1]}=\left\{\begin{array}{ll}
   {\cal O}(N^{-\alpha-2\gamma-1}\ln^{\mu}(N)),& \mbox{$f(x)=(1-x)^{\gamma}\ln^{\mu}(1-x)g(x)$}\\
   {\cal O}(N^{-s-1/2}\ln^{\mu}(N)),&\mbox{$f(x)=|x-z_0|^s\ln^{\mu}|z-z_0|g(x)$}\\
   {\cal O}(N^{-\beta-2\delta-1}\ln^{\mu}(N)),& \mbox{$f(x)=(1+x)^{\delta}\ln^{\mu}(1+x)g(x)$}\end{array}\right.
\end{equation}
where $g\in C^{\infty}[-1,1]$,  $\mu$ is a nonnegative integer, $z_{0}\in(-1,1)$, $\min\{\alpha+\gamma,\beta+ \delta,\alpha+2\gamma,\beta+ 2\delta\}>-1$ for the boundary singularities, 
and   $s> -\frac{1}{2}$ and $\min\{\alpha,\beta\}\ge  -\frac{1}{2}$ for  the interior singularity.  In particular, if $\gamma,\, \delta$ are integers and $\mu\ge 1$, then 
\begin{equation}
  \|f-\mathcal{P}_{N}^{f,Ja}\|_{L_{\omega}^{2}[-1,1]}=\left\{\begin{array}{ll}
    {\cal O}(N^{-\alpha-2\gamma-1}\ln^{\mu-1}(N)),& \mbox{$f(x)=(1-x)^{\gamma}\ln^{\mu}(1-x)g(x)$}\\
{\cal O}(N^{-\beta-2\delta-1}\ln^{\mu-1}(N)),& \mbox{$f(x)=(1+x)^{\delta}\ln^{\mu}(1+x)g(x)$}.\end{array}\right.
\end{equation}
\end{theorem}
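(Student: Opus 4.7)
The plan is to reduce the theorem directly to the coefficient estimates already established in Theorems 1, 2, and 3, combined with the Parseval-type identity (\ref{truerr}) and the asymptotics of $\sigma_{n}^{\alpha,\beta}$. Writing
\[
\|f-\mathcal{P}_{N}^{f,Ja}\|_{L^{2}_{\omega}[-1,1]}^{2}=\sum_{n=N+1}^{\infty}|a_{n}(\alpha,\beta)|^{2}\sigma_{n}^{\alpha,\beta},
\]
the problem splits into three nearly identical computations, one per singularity type. In each case we first record that from (\ref{eq:jacsigma}) and the standard ratio $\Gamma(n+\alpha+1)\Gamma(n+\beta+1)/[n!\,\Gamma(n+\alpha+\beta+1)]=\mathcal{O}(n^{\alpha+\beta})$ we have $\sigma_{n}^{\alpha,\beta}=\mathcal{O}(n^{-1})$ as $n\to\infty$.

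For the right boundary case $f(x)=(1-x)^{\gamma}\ln^{\mu}(1-x)g(x)$, Theorem 1 gives $|a_{n}(\alpha,\beta)|=\mathcal{O}(n^{-\alpha-2\gamma-1}\ln^{\mu}(n))$, so the tail reduces to a sum of the form $\sum_{n>N}n^{-2\alpha-4\gamma-3}\ln^{2\mu}(n)$. Under the hypothesis $\alpha+2\gamma>-1$ this series converges, and a standard integral comparison yields
\[
\sum_{n=N+1}^{\infty}n^{-2\alpha-4\gamma-3}\ln^{2\mu}(n)=\mathcal{O}\!\left(N^{-2\alpha-4\gamma-2}\ln^{2\mu}(N)\right).
\]
Taking square roots delivers the claimed $\mathcal{O}(N^{-\alpha-2\gamma-1}\ln^{\mu}(N))$ bound. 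The left boundary case is treated identically using Theorem 2 under $\beta+2\delta>-1$, and the refined statement for integer $\gamma$ or $\delta$ with $\mu\ge 1$ follows by replacing $\ln^{\mu}$ with $\ln^{\mu-1}$ throughout using the sharper tails in (\ref{Jacobiend1}) and (\ref{Jacobiend2}).

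For the interior case $f(x)=|x-z_{0}|^{s}\ln^{\mu}|x-z_{0}|g(x)$ with $z_{0}\in(-1,1)$, Theorem 3 yields $|a_{n}(\alpha,\beta)|=\mathcal{O}(n^{-s-1/2}\ln^{\mu}(n))$. Since $\min\{\alpha,\beta\}\ge -\frac{1}{2}$ ensures both the integrability needed in the proof of Theorem 3 and the uniform $\sigma_{n}^{\alpha,\beta}=\mathcal{O}(n^{-1})$ bound, the tail becomes $\sum_{n>N}n^{-2s-2}\ln^{2\mu}(n)$, which converges precisely when $s>-\frac{1}{2}$, giving $\mathcal{O}(N^{-2s-1}\ln^{2\mu}(N))$ and hence $\mathcal{O}(N^{-s-1/2}\ln^{\mu}(N))$ after taking the square root.

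I expect essentially no obstacle beyond the convergence conditions: the heavy lifting lies in the Hilb-plus-van-der-Corput machinery that produced Theorems 1, 2, and 3. The only subtle point is verifying that the threshold hypotheses stated in the theorem ($\alpha+2\gamma>-1$, $\beta+2\delta>-1$, respectively $s>-\tfrac{1}{2}$) are exactly what is needed to make the tail series summable after multiplication by $\sigma_{n}^{\alpha,\beta}\sim n^{-1}$; this must match the $L^{2}_{\omega}$ admissibility conditions recorded at the start of Section 4, so the endpoints of the parameter ranges should be checked against the convergence of the corresponding integral of $\sum|a_{n}|^{2}\sigma_{n}^{\alpha,\beta}$ rather than merely the decay of a single coefficient.
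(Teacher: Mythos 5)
Your proposal is correct and follows exactly the route the paper intends: the paper supplies no separate argument for Theorem 4, relying precisely on the Parseval identity (\ref{truerr}), the coefficient bounds of Theorems 1--3, the fact $\sigma_{n}^{\alpha,\beta}=\mathcal{O}(n^{-1})$, and an integral-comparison estimate of the tail, which is what you do. One small correction: the ratio $\Gamma(n+\alpha+1)\Gamma(n+\beta+1)/[n!\,\Gamma(n+\alpha+\beta+1)]$ tends to $1$ (it is $\mathcal{O}(1)$, not $\mathcal{O}(n^{\alpha+\beta})$), and it is this that gives $\sigma_{n}^{\alpha,\beta}=\mathcal{O}(n^{-1})$ for all $\alpha,\beta>-1$; the exponent you wrote would not yield that bound when $\alpha+\beta>0$.
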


While the optimal convergence rate on the orthogonal projection can be obtained from 
the sharp bounds for the Gegenbauer coefficients  and
$$
\|f-\mathcal{P}_{N}^{f,Ge}\|_{L_{\omega}^{2}[-1,1]}=\sqrt{\sum_{n=N+1}^{\infty}a_{n}^{2}(\lambda)\ \hbar_{n}}
$$
together with 
$$
 \hbar_{n}=\frac{2^{1-2\lambda}\pi}{\Gamma^{2}(\lambda)}\frac{\Gamma(n+2\lambda)}{n!(n+\lambda)}={\cal O}(n^{2\lambda-2})
$$
(see \cite[p. 79]{Hesthaven} and (\cite[(7.33.1) p. 171]{Szego}).

\begin{corollary}
Suppose that $f$ is of  algebraic and logorithmatic regularity at an endpoint or  interior point, then for the Gegenbauer and Chebyshev  expansions, it follows that

\begin{equation}
\quad\quad  \|f-\mathcal{P}_{N}^{f,Ge}\|_{L_{\omega}^{2}[-1,1]}=\left\{\begin{array}{ll}
   {\cal O}\left(N^{-\lambda-2\gamma-1/2}\ln^{\mu}(N)\right),& \mbox{$f(x)=(1-x)^{\gamma}\ln^{\mu}(1-x)g(x)$}\\
   {\cal O}(N^{-s-1/2}\ln^{\mu}(N)),&\mbox{$f(x)=|x-z_0|^s\ln^{\mu}|z-z_0|g(x)$}\\
    {\cal O}\left(N^{-\lambda-2\delta-1/2}\ln^{\mu}(N)\right),& \mbox{$f(x)=(1+x)^{\delta}\ln^{\mu}(1+x)g(x)$},\end{array}\right.
\end{equation}
\begin{equation}
\quad\quad\|f-\mathcal{P}_{N}^{f,Ch}\|_{L_{\omega}^{2}[-1,1]}=\left\{\begin{array}{ll}
   {\cal O}\left(N^{-1/2-2\gamma}\ln^{\mu}(N)\right),& \mbox{$f(x)=(1-x)^{\gamma}\ln^{\mu}(1-x)g(x)$}\\
   {\cal O}(N^{-s-1/2}\ln^{\mu}(N)),&\mbox{$f(x)=|x-z_0|^s\ln^{\mu}|z-z_0|g(x)$}\\
    {\cal O}\left(N^{-1/2-2\delta}\ln^{\mu}(N)\right),& \mbox{$f(x)=(1+x)^{\delta}\ln^{\mu}(1+x)g(x)$}\end{array}\right.
\end{equation}
where $g\in C^{\infty}[-1,1]$, $z_{0}\in(-1,1)$, $\min\{\lambda+\gamma,\lambda+2\gamma,\lambda+ \delta,,\lambda+2 \delta\}>-\frac{1}{2}$,  $s>-\frac{1}{2}$ with $\lambda\ge 0$.  In particular, if $\gamma,\, \delta$ are integers and $\mu\ge 1$, then 
\begin{equation}
\quad\quad  \|f-\mathcal{P}_{N}^{f,Ge}\|_{L_{\omega}^{2}[-1,1]}=\left\{\begin{array}{ll}
    {\cal O}(N^{-\lambda-2\gamma-1/2}\ln^{\mu-1}(N)),& \mbox{$f(x)=(1-x)^{\gamma}\ln^{\mu}(1-x)g(x)$}\\
 {\cal O}\left(N^{-\lambda-2\delta-1/2}\ln^{\mu-1}(N)\right),& \mbox{$f(x)=(1+x)^{\delta}\ln^{\mu}(1+x)g(x)$},\end{array}\right.
\end{equation}
\begin{equation}
\quad\quad \|f-\mathcal{P}_{N}^{f,Ch}\|_{L_{\omega}^{2}[-1,1]}=\left\{\begin{array}{ll}
   {\cal O}\left(N^{-1/2-2\gamma}\ln^{\mu-1}(N)\right),& \mbox{$f(x)=(1-x)^{\gamma}\ln^{\mu}(1-x)g(x)$}\\
    {\cal O}\left(N^{-1/2-2\delta}\ln^{\mu-1}(N)\right),& \mbox{$f(x)=(1+x)^{\delta}\ln^{\mu}(1+x)g(x)$}.\end{array}\right.
\end{equation}
\end{corollary}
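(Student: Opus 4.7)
The proof reduces to applying Parseval to the Gegenbauer/Chebyshev orthogonal projection and then inserting the sharp coefficient decay estimates already established in Corollaries 2 and 3, together with the standard asymptotics for the normalization constants. Concretely, I would start from the identity
\begin{equation*}
\|f-\mathcal{P}_{N}^{f,Ge}\|_{L_{\omega}^{2}[-1,1]}^{2}=\sum_{n=N+1}^{\infty}a_{n}^{2}(\lambda)\,\hbar_{n},
\qquad
\|f-\mathcal{P}_{N}^{f,Ch}\|_{L_{\omega}^{2}[-1,1]}^{2}=\sum_{n=N+1}^{\infty}c_{n}^{2}\,h_{n}^{\mathrm{Ch}},
\end{equation*}
already recalled in the text, use $\hbar_{n}=\mathcal{O}(n^{2\lambda-2})$ (from \cite[(7.33.1)]{Szego}) and $h_{n}^{\mathrm{Ch}}=\mathcal{O}(1)$, and then invoke Corollaries 2 and 3 to control $|a_{n}(\lambda)|$ and $|c_{n}|$ according to which of the three singular profiles $f$ belongs to.

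For the left endpoint case $f(x)=(1-x)^{\gamma}\ln^{\mu}(1-x)g(x)$, Corollary 2 gives $|a_{n}(\lambda)|=\mathcal{O}(n^{-2\lambda-2\gamma}\ln^{\mu}n)$ (or $\ln^{\mu-1}n$ for integer $\gamma$), so
\begin{equation*}
a_{n}^{2}(\lambda)\,\hbar_{n}=\mathcal{O}\!\left(n^{-2\lambda-4\gamma-2}\ln^{2\mu}n\right),
\end{equation*}
which a direct integral comparison (valid whenever $\lambda+\gamma>-\tfrac12$, so that the series converges and the tail behaves like the integral from $N$ to $\infty$) bounds by $\mathcal{O}(N^{-2\lambda-4\gamma-1}\ln^{2\mu}N)$; taking square roots yields the stated rate $\mathcal{O}(N^{-\lambda-2\gamma-1/2}\ln^{\mu}N)$. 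The right endpoint case is identical with $\gamma,\alpha\leftrightarrow\delta,\beta$, and the interior case uses $|a_{n}(\lambda)|=\mathcal{O}(n^{-\lambda-s}\ln^{\mu}n)$ from Corollary 3, producing $a_{n}^{2}(\lambda)\hbar_{n}=\mathcal{O}(n^{-2s-2}\ln^{2\mu}n)$, whence the tail sum is $\mathcal{O}(N^{-2s-1}\ln^{2\mu}N)$ and the square root delivers $\mathcal{O}(N^{-s-1/2}\ln^{\mu}N)$, nicely independent of $\lambda$. The Chebyshev estimates follow in exactly the same way, with $c_{n}^{2}$ replacing $a_{n}^{2}(\lambda)\hbar_{n}$ and the bounds \eqref{Chebend1}, \eqref{Chebend2}, \eqref{Chebint1} substituted in.

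The sharper statements for integer $\gamma$ or $\delta$ with $\mu\ge 1$ follow simply by replacing $\ln^{\mu}n$ with $\ln^{\mu-1}n$ throughout the same computation, since Corollary 2 already provides that improved logarithmic power.

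There is no genuinely hard step: the only issue to watch is to verify that the tail sum $\sum_{n>N}n^{-q}\ln^{2\mu}n$ behaves like $\tfrac{1}{q-1}N^{-(q-1)}\ln^{2\mu}N$ (which holds whenever $q>1$, equivalent to the hypotheses $\min\{\lambda+2\gamma,\lambda+2\delta\}>-\tfrac12$ and $s>-\tfrac12$ stated in the corollary), and to keep track of logarithmic factors through the summation. The hypotheses $\min\{\lambda+\gamma,\lambda+\delta\}>-\tfrac12$ guarantee $f\in L_{\omega}^{2}[-1,1]$ so that Parseval applies. Everything else is bookkeeping on exponents, and the argument for Chebyshev is the $\lambda\to 0$ limit of the same estimate.
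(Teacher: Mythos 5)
Your proposal is correct and follows exactly the route the paper indicates: the Parseval identity $\|f-\mathcal{P}_{N}^{f,Ge}\|_{L_{\omega}^{2}}^{2}=\sum_{n>N}a_{n}^{2}(\lambda)\hbar_{n}$ with $\hbar_{n}={\cal O}(n^{2\lambda-2})$, the coefficient bounds of Corollaries 2 and 3, and an integral comparison for the tail sum, with the exponent bookkeeping coming out as stated. The only minor imprecision is the attribution of hypotheses at the end: membership $f\in L_{\omega}^{2}[-1,1]$ is governed by $\lambda+2\gamma>-\tfrac12$ (equivalently the convergence of the tail series), while $\lambda+\gamma>-\tfrac12$ is the condition under which the coefficient bounds of Corollary 2 apply — but both are included in the corollary's hypotheses, so nothing is lost.
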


{\sc Remark 6}.
From Theorem 4 and Corollary 4, we see that for functions with limited regularities at the endpoints $f(x)=(1-x)^\gamma(1+x)^\delta\ln^{\mu}(1-x^2) g(x)$, the Jacobi or Gegenbauer expansion can achieve 
faster convergence order than Chebyshev expansion if $\min\{\alpha,\beta\}>-\frac{1}{2}$ or $\lambda>0$,  which is illustrated by {\sc Fig.} \ref{fig:figs6}.
\begin{figure}[hpbt]
\centerline{\includegraphics[height=4cm,width=15cm]{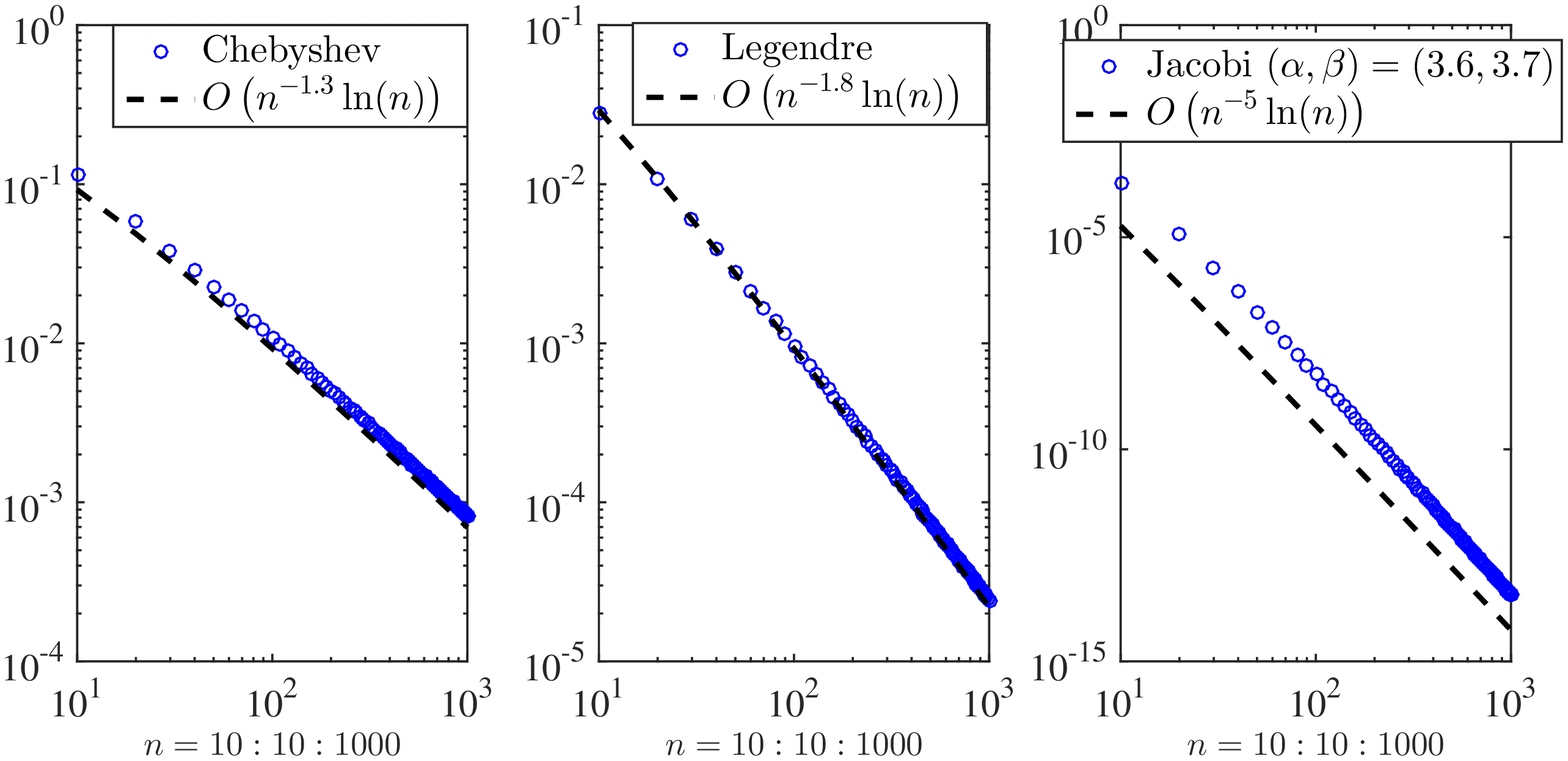}}
         \centerline{\includegraphics[height=4cm,width=15cm]{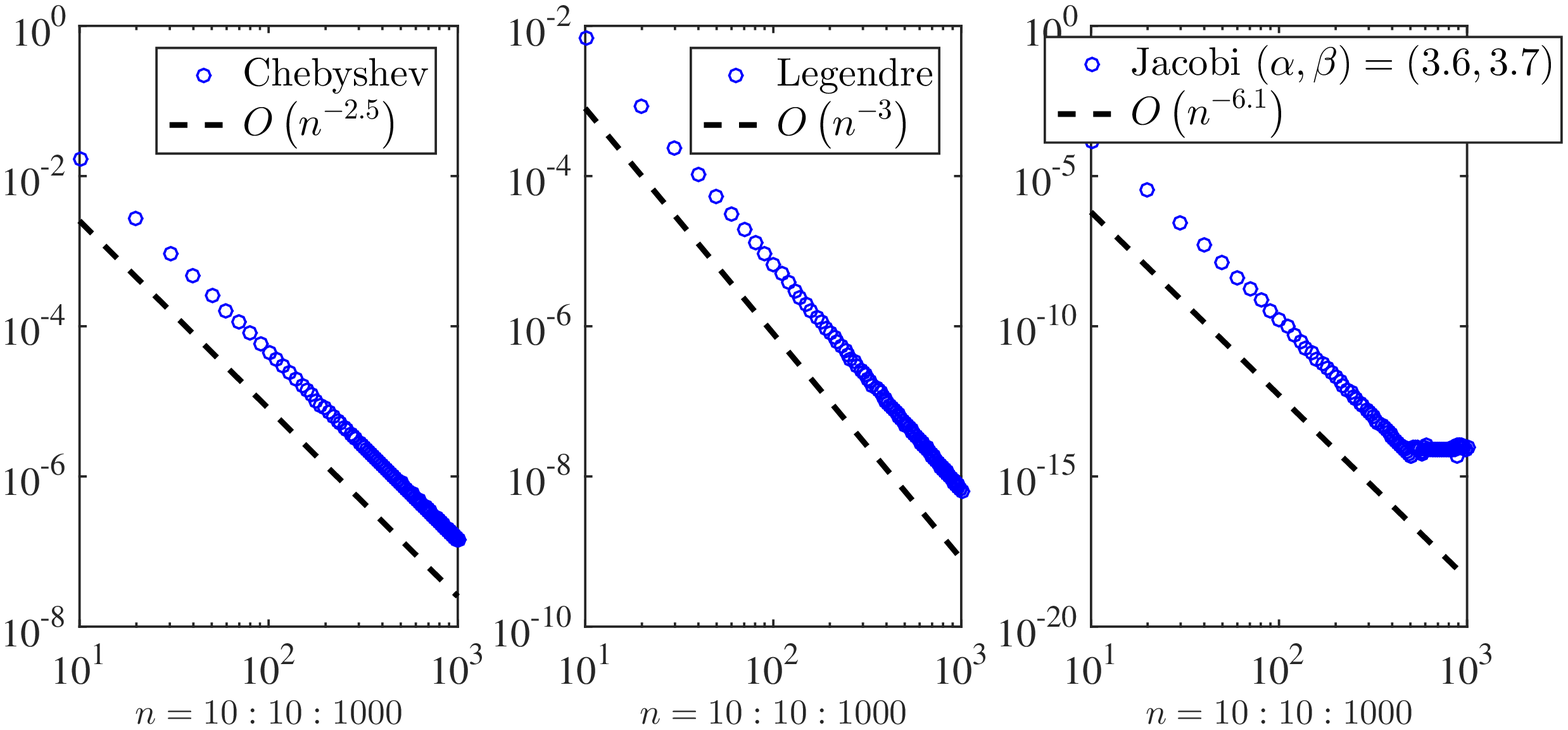}}
\caption{The weighted norm errors of the truncated Chebyshev, Legendre and Jacobi ($\alpha=3.6$, $\beta=3.7$)  expansions $\|f-\mathcal{P}_{n}^{f}\|_{L_{\omega}^{2}[-1,1]}$ for $f(x)=(1-x)^{\gamma}(1+x)^{\delta}\ln(1-x^2)$: $\gamma=0.6$ and $\delta=0.4$ (first row), and  $\gamma=1$ and $\delta=2$ (second row), respectively.}\label{fig:figs6}
\end{figure}

{\sc Remark 7}. It is obvious from Theorem 4 and Corollary 4 that for the functions with  interior  singularity, all these spectral expansions have the same convergence order in the case $s>-\frac{1}{2}$ with 
$\min\{\alpha,\beta\}\ge  -\frac{1}{2}$ or $\lambda\ge 0$. All these estimates are attainable.

\begin{figure}[hpbt]
\centerline{\includegraphics[height=4cm,width=15cm]{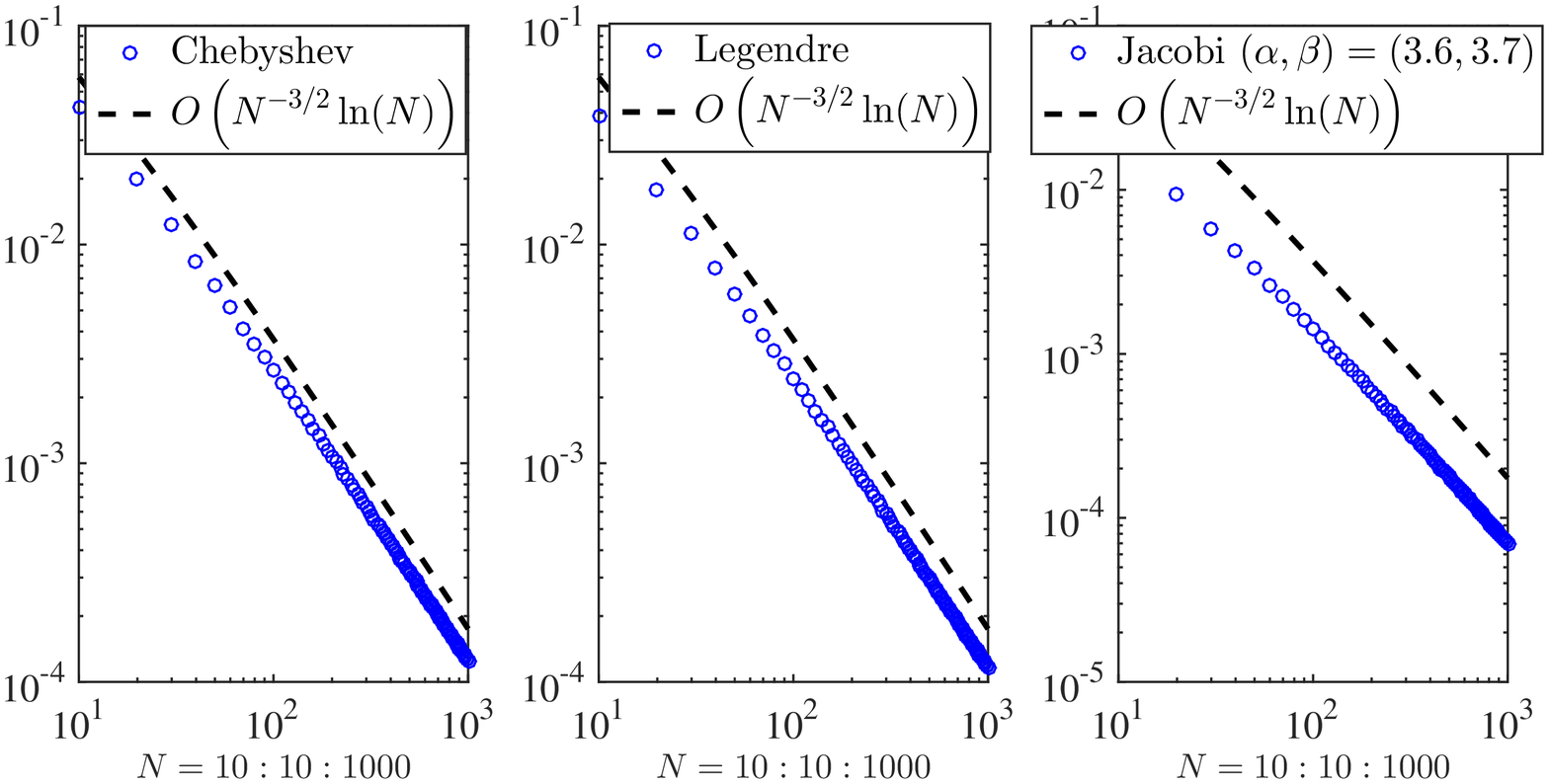}}
         \centerline{\includegraphics[height=4cm,width=15cm]{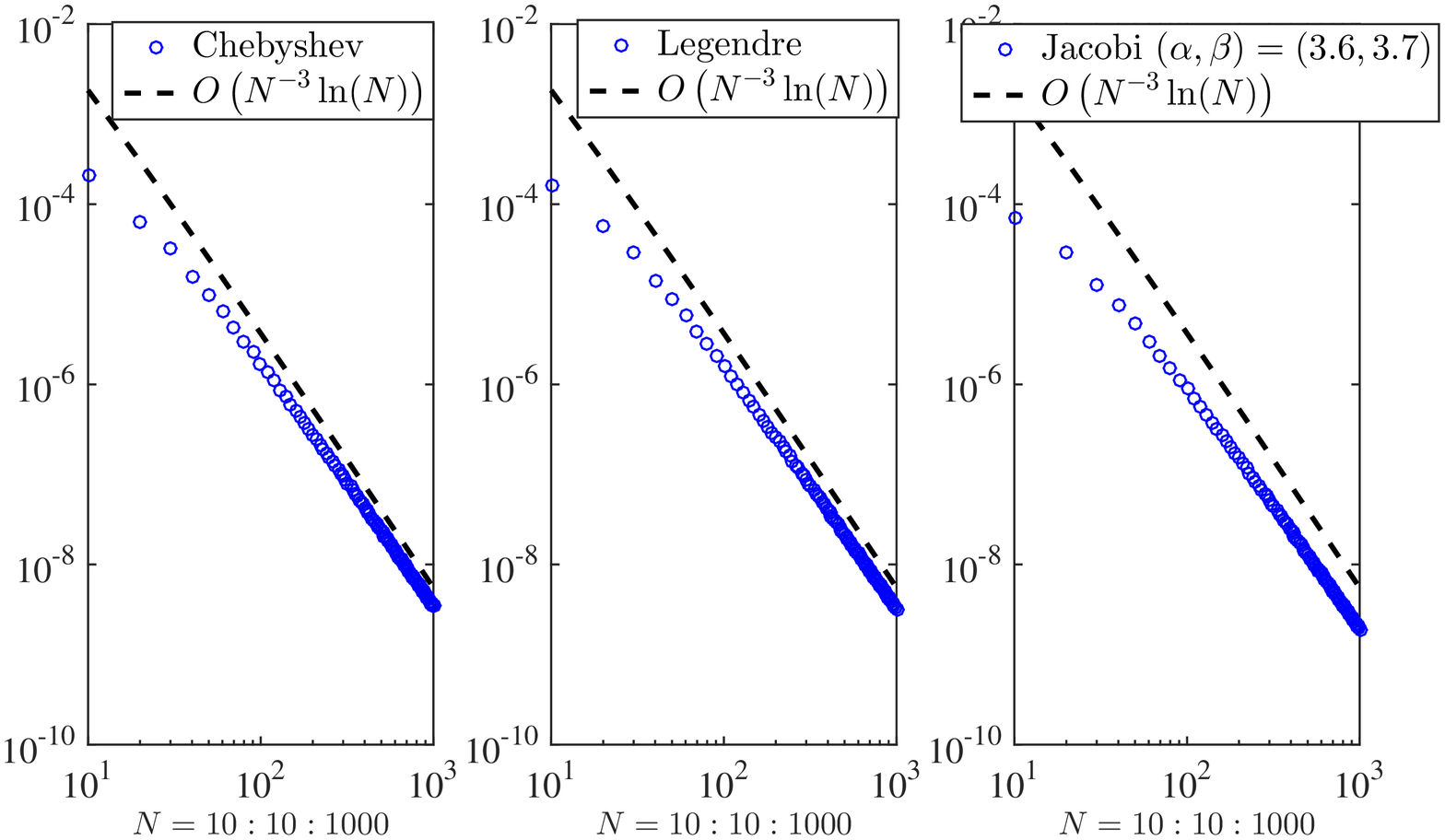}}
\caption{The weighted norm errors of the truncated Chebyshev, Legendre and Jacobi ($\alpha=3.6$, $\beta=3.7$) expansions $\|f-\mathcal{P}_{n}^{f}\|_{L_{\omega}^{2}[-1,1]}$ for $f(x)=|x-\frac{1}{2}|^s\ln|x-\frac{1}{2}|$: $s=1$ (first row) and $s=2.5$ (second row), respectively.}\label{fig:figs7}
\end{figure}

\vspace{0.6cm}
In particular, for  the non-uniformly Jacobi-weighted Sobolev space $H^{m,\alpha,\beta}(\Omega)$ with integer $m\ge   0$, $\alpha>-1$, $\beta >-1$, $\Omega=[-1,1]$ and weighted norm
\begin{equation}
\|u\|_{H^{m,\alpha,\beta}(\Omega)}=\left\{\sum_{q=0}^m\int_{-1}^1(1-x)^{\alpha+q}(1+x)^{\beta+q}[u^{(q)}(x)]^2dx\right\}^{\frac{1}{2}},
\end{equation}
define 
\begin{equation}\displaystyle
f^{(q)}(x)=\sum_{n=0}^{\infty}a^{(q)}_n(\alpha+q,\beta+q)P_n^{(\alpha+q,\beta+q)}(x),\quad q=0,1,\ldots,m
\end{equation}
with
\begin{equation}\displaystyle
a^{(q)}_n(\alpha+q,\beta+q)=\frac{1}{\sigma_n^{\alpha+q,\beta+q}}\int_{-1}^1(1-x)^{\alpha+q}(1+x)^{\beta+q}f^{(q)}(x)P_n^{(\alpha+q,\beta+q)}(x)dx.
\end{equation}
From (\ref{JacInt}), we see that
\begin{equation}\label{JacInt2}
\begin{array}{lll}
&&a^{(q)}_n(\alpha+q,\beta+q)\\
&=&\frac{1}{\sigma_n^{\alpha+q,\beta+q}}\int_{-1}^1(1-x)^{\alpha+q}(1+x)^{\beta+q}f^{(q)}(x)P_n^{(\alpha+q,\beta+q)}(x)dx\\
 &=&\frac{(-1)^{q}2^{q}\sigma_{n+q}^{\alpha,\beta}}{\sigma_n^{\alpha+q,\beta+q}}(n+q)(n+q-1)\cdots(n+1)\int_{-1}^{1}\!(1-x)^{\alpha}(1+x)^{\beta}f(x)P_{n+q}^{(\alpha,\beta)}(x)\,\mathrm{d}x\\
 &=& \frac{\sigma_{n+q}^{\alpha,\beta}}{\sigma_n^{\alpha+q,\beta+q}}(-1)^{q}2^{q}(n+q)(n+q-1)\cdots(n+1) a_{n+q}(\alpha,\beta)\\
&=&a_{n+q}(\alpha,\beta){\cal O}(n^q),
\end{array}
\end{equation}
which together with Theorems 1-3 yields the following convergence rate.

Notice that for $(x)=(1-x)^{\gamma}\ln^{\mu}(1-x)g(x)$ or $f(x)=(1+x)^{\delta}\ln^{\mu}(1+x)g(x)$, it is easy to verify that $f\in H^{m,\alpha,\beta}(\Omega)$ if $\min\{\alpha+2\gamma-m,\beta+2 \delta-m\}>-1$. For $f(x)=|x-z_0|^s\ln^{\mu}|z-z_0|g(x)$, 
 $f\in H^{m,\alpha,\beta}(\Omega)$ if $s>m-\frac{1}{2}$.

\begin{theorem}
For the Jacobi expansion, (\ref{JNerror1}) and (\ref{JNerror2}) are satisfied.
\end{theorem}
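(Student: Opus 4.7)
The plan is to combine the coefficient decay estimates of Theorems 1--3 with Parseval's identity applied term by term in the weighted Sobolev norm. First I would write
$$
\|f-\mathcal{P}_{N}^{f}\|_{H^{m,\alpha,\beta}(\Omega)}^{2} = \sum_{q=0}^{m}\int_{-1}^{1}(1-x)^{\alpha+q}(1+x)^{\beta+q}\bigl[(f-\mathcal{P}_{N}^{f})^{(q)}(x)\bigr]^{2}\mathrm{d}x
$$
and invoke the differentiation identity $\frac{d}{dx}P_{n}^{(\alpha,\beta)}(x)=\frac{n+\alpha+\beta+1}{2}P_{n-1}^{(\alpha+1,\beta+1)}(x)$ iteratively to see that $(\mathcal{P}_{N}^{f})^{(q)}$ is exactly the partial sum of $f^{(q)}$ through index $N-q$ in the Jacobi basis $\{P_{n}^{(\alpha+q,\beta+q)}\}$. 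Parseval's identity applied with weight parameters $(\alpha+q,\beta+q)$, in the form of (\ref{truerr}), then delivers
$$
\int_{-1}^{1}(1-x)^{\alpha+q}(1+x)^{\beta+q}\bigl[(f-\mathcal{P}_{N}^{f})^{(q)}\bigr]^{2}\mathrm{d}x = \sum_{n=N-q+1}^{\infty}\bigl|a_{n}^{(q)}(\alpha+q,\beta+q)\bigr|^{2}\sigma_{n}^{\alpha+q,\beta+q}.
$$

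Next I would plug in the identity (\ref{JacInt2}), which gives $a_{n}^{(q)}(\alpha+q,\beta+q)=a_{n+q}(\alpha,\beta)\,\mathcal{O}(n^{q})$, together with the standard asymptotic $\sigma_{n}^{\alpha+q,\beta+q}\sim C/n$. After reindexing $m=n+q$, each term collapses to
$$
\int_{-1}^{1}(1-x)^{\alpha+q}(1+x)^{\beta+q}\bigl[(f-\mathcal{P}_{N}^{f})^{(q)}\bigr]^{2}\mathrm{d}x = \mathcal{O}\!\left(\sum_{m=N+1}^{\infty}|a_{m}(\alpha,\beta)|^{2}\,m^{2q-1}\right).
$$
Substituting the bound $|a_{m}(\alpha,\beta)|=\mathcal{O}(m^{-\alpha-2\gamma-1}\ln^{\mu}m)$ from Theorem 1 for $f(x)=(1-x)^{\gamma}\ln^{\mu}(1-x)g(x)$, the tail is $\mathcal{O}(N^{2q-2\alpha-4\gamma-2}\ln^{2\mu}N)$ provided $\alpha+2\gamma>q-1$; taking square roots and maximizing the rate $N^{q-\alpha-2\gamma-1}\ln^{\mu}N$ over $q=0,1,\ldots,m$ gives the dominant contribution at $q=m$, which matches the first line of (\ref{JNerror1}). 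The left-endpoint and interior cases are handled identically using Theorems 2 and 3, with convergence conditions $\beta+2\delta>q-1$ and $s>q-\tfrac{1}{2}$ respectively. The refined estimate (\ref{JNerror2}) for integer $\gamma,\delta$ and $\mu\geq 1$ follows verbatim, using the improved $\ln^{\mu-1}$ factor from Theorems 1 and 2.

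The main technical obstacle I expect is the combinatorial bookkeeping of indices and parameters: I need to verify that the hypothesis $\min\{\alpha+2\gamma,\beta+2\delta\}>m-1$ (or $s>m-\tfrac{1}{2}$ in the interior case) is exactly what guarantees that the tail sum in every $q=0,1,\ldots,m$ converges and is dominated by the $q=m$ term. The ratio $\sigma_{n+q}^{\alpha,\beta}/\sigma_{n}^{\alpha+q,\beta+q}$ appearing in (\ref{JacInt2}) also needs a uniform two-sided bound so that the $\mathcal{O}(n^{q})$ prefactor combines cleanly with $\sigma_{n}^{\alpha+q,\beta+q}\sim C/n$ to produce the asserted $n^{2q-1}$ weight. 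Both verifications are routine once the algebraic structure is in place, but they pin down the precise admissible range of $(\alpha,\beta)$ in the theorem.
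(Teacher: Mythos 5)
Your proposal is correct and follows essentially the same route as the paper: both express $\|f-\mathcal{P}_{N}^{f}\|_{H^{m,\alpha,\beta}(\Omega)}^{2}$ as a sum over $q=0,\ldots,m$ of Parseval tails for $f^{(q)}$ in the shifted basis $\{P_{n}^{(\alpha+q,\beta+q)}\}$, then feed in (\ref{JacInt2}) together with the coefficient bounds of Theorems 1--3 and the asymptotic $\sigma_{n}^{\alpha+q,\beta+q}=\mathcal{O}(n^{-1})$. Your version is in fact slightly more careful than the paper's one-line argument, since you track the correct tail index $N-q+1$ and make explicit that the hypothesis $\min\{\alpha+2\gamma,\beta+2\delta\}>m-1$ (resp.\ $s>m-\tfrac{1}{2}$) is what guarantees convergence and dominance of the $q=m$ term.
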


\begin{proof}
Note that
$$
 \|f-\mathcal{P}_{N}^{f,Ja}\|^2_{H^{m,\alpha,\beta}(\Omega)}=\sum_{q=0}^m\sum_{n=N+1}^{\infty}[a_n^{(q)}(\alpha+q,\beta+q)]^2\sigma_{n}^{\alpha+q,\beta+q},
 $$
 which directly leads to the desired result by Theorems 1-3 together with (\ref{JacInt2}).
\end{proof}

Similar results can be obtained for Gegenbauer and Chebyshev projections from $${\rm span}\left\{T_n(x)\right\}_{n=0}^N={\rm span}\left\{P^{(-\frac{1}{2},-\frac{1}{2})}_n(x)\right\}_{n=0}^N$$ and
$${\rm span}\left\{C_n^{(\lambda)}(x)\right\}_{n=0}^N={\rm span}\left\{P^{(\lambda-\frac{1}{2},\lambda-\frac{1}{2})}_n(x)\right\}_{n=0}^N.$$
 
 \begin{corollary}
For the Gegenbauer and Chebyshev  expansions, it follows for $\mu\ge 0$ that

\begin{equation}\label{eq:asytranjacexpanweights}
\quad \,\,\,\,\, \|f-\mathcal{P}_{N}^{f,Ge}\|_{H^{m,\alpha,\beta}(\Omega)}=\left\{\begin{array}{ll}
   {\cal O}\left(N^{m-\lambda-2\gamma-1/2}\ln^{\mu}(N)\right),& \mbox{$f(x)=(1-x)^{\gamma}\ln^{\mu}(1-x)g(x)$}\\
   {\cal O}(N^{m-s-1/2}\ln^{\mu}(N)),&\mbox{$f(x)=|x-z_0|^s\ln^{\mu}|z-z_0|g(x)$}\\
    {\cal O}\left(N^{m-\lambda-2\delta-1/2}\ln^{\mu}(N)\right),& \mbox{$f(x)=(1+x)^{\delta}\ln^{\mu}(1+x)g(x)$},\end{array}\right.
\end{equation}
\begin{equation}
\quad\quad \|f-\mathcal{P}_{N}^{f,Ch}\|_{ H^{m,\alpha,\beta}(\Omega)}=\left\{\begin{array}{ll}
   {\cal O}\left(N^{m-1/2-2\gamma}\ln^{\mu}(N)\right),& \mbox{$f(x)=(1-x)^{\gamma}\ln^{\mu}(1-x)g(x)$}\\
   {\cal O}(N^{m-s-1/2}\ln^{\mu}(N)),&\mbox{$f(x)=|x-z_0|^s\ln^{\mu}|z-z_0|g(x)$}\\
    {\cal O}\left(N^{m-1/2-2\delta}\ln^{\mu}(N)\right),& \mbox{$f(x)=(1+x)^{\delta}\ln^{\mu}(1+x)g(x)$}\end{array}\right.
\end{equation}
where $g\in C^{\infty}[-1,1]$, $z_{0}\in(-1,1)$, $\min\{\lambda+\gamma,\lambda+2\gamma,\lambda+ \delta,,\lambda+2 \delta\}>m-\frac{1}{2}$,  $s>m-\frac{1}{2}$ with $\lambda\ge 0$.  
In particular, if $\gamma,\, \delta$ are integers and $\mu\ge 1$, then 
\begin{equation}\label{eq:asytranjacexpanweights2}
\quad\,\,  \|f-\mathcal{P}_{N}^{f,Ge}\|_{H^{m,\alpha,\beta}(\Omega)}=\left\{\begin{array}{ll}
    {\cal O}(N^{m-\lambda-2\gamma-1/2}\ln^{\mu-1}(N)),& \mbox{$f(x)=(1-x)^{\gamma}\ln^{\mu}(1-x)g(x)$}\\
 {\cal O}\left(N^{m-\lambda-2\delta-1/2}\ln^{\mu-1}(N)\right),& \mbox{$f(x)=(1+x)^{\delta}\ln^{\mu}(1+x)g(x)$},\end{array}\right.
\end{equation}
\begin{equation}
\quad\quad \|f-\mathcal{P}_{N}^{f,Ch}\|_{H^{m,\alpha,\beta}(\Omega)}=\left\{\begin{array}{ll}
   {\cal O}\left(N^{m-1/2-2\gamma}\ln^{\mu-1}(N)\right),& \mbox{$f(x)=(1-x)^{\gamma}\ln^{\mu}(1-x)g(x)$}\\
    {\cal O}\left(N^{m-1/2-2\delta}\ln^{\mu-1}(N)\right),& \mbox{$f(x)=(1+x)^{\delta}\ln^{\mu}(1+x)g(x)$}.\end{array}\right.
\end{equation}
\end{corollary}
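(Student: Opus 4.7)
The plan is to reduce both the Gegenbauer and the Chebyshev assertions to Theorem 5 via the polynomial-subspace identifications recorded just before the statement,
\[
\mathrm{span}\{T_n\}_{n=0}^{N}=\mathrm{span}\{P_n^{(-\frac12,-\frac12)}\}_{n=0}^{N},\qquad
\mathrm{span}\{C_n^{(\lambda)}\}_{n=0}^{N}=\mathrm{span}\{P_n^{(\lambda-\frac12,\lambda-\frac12)}\}_{n=0}^{N}.
\]
The Chebyshev and Gegenbauer orthogonal projections are characterized uniquely by their target subspace together with their weighted $L^2$-inner product; their weights $(1-x^2)^{-1/2}$ and $(1-x^2)^{\lambda-1/2}$ coincide with the Jacobi weights $(1-x)^{\alpha}(1+x)^{\beta}$ precisely when $\alpha=\beta=-\tfrac12$ and $\alpha=\beta=\lambda-\tfrac12$, respectively. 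Hence one has the operator identities
\[
\mathcal{P}_{N}^{f,Ch}=\mathcal{P}_{N}^{f,Ja}\big|_{\alpha=\beta=-\frac12},\qquad
\mathcal{P}_{N}^{f,Ge}=\mathcal{P}_{N}^{f,Ja}\big|_{\alpha=\beta=\lambda-\frac12},
\]
so that the desired bounds can be read off from the Jacobi result.

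Next, I would simply substitute these parameter values into Theorem 5 and simplify. For the right-endpoint case $f(x)=(1-x)^{\gamma}\ln^{\mu}(1-x)g(x)$, plugging $\alpha=-\tfrac12$ into the Jacobi exponent $N^{m-\alpha-2\gamma-1}$ yields $N^{m-\frac12-2\gamma}$, matching the stated Chebyshev rate; plugging $\alpha=\lambda-\tfrac12$ yields $N^{m-\lambda-2\gamma-\frac12}$, matching the Gegenbauer rate. The symmetric substitution $\beta\mapsto -\tfrac12$ or $\beta\mapsto\lambda-\tfrac12$ handles the left-endpoint case. The interior-singularity exponent $N^{m-s-\frac12}\ln^{\mu}(N)$ is independent of $(\alpha,\beta)$ in Theorem 5 and therefore transfers without change. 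Finally, the sharper $\ln^{\mu-1}(N)$ factor for integer $\gamma,\delta$ is inherited because it comes, through (\ref{JacInt2}), from Theorems 1--2, whose conclusions already include the refined logarithmic power for integer exponents.

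The last step is a routine bookkeeping check that the hypotheses of Theorem 5 translate correctly under the substitutions: the Jacobi condition $\min\{\alpha+\gamma,\alpha+2\gamma,\beta+\delta,\beta+2\delta\}>m-1$ becomes the Gegenbauer condition $\min\{\lambda+\gamma,\lambda+2\gamma,\lambda+\delta,\lambda+2\delta\}>m-\tfrac12$ under $\alpha=\beta=\lambda-\tfrac12$, and the analogous Chebyshev condition under $\alpha=\beta=-\tfrac12$; similarly the $L^2_\omega$-membership thresholds and the interior-singularity condition $s>m-\tfrac12$ pass through unchanged. I do not expect any genuine obstacle here: the orthogonal-projection identification is immediate from uniqueness of best $L^2_\omega$-approximation, the identity (\ref{JacInt2}) on higher-derivative coefficients is valid for all admissible $(\alpha,\beta)$, and the quantitative content has already been established by Theorems 1--3 and Theorem 5. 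The corollary is therefore a direct specialization.
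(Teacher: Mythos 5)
Your proposal is correct and follows essentially the same route as the paper: the paper's justification is precisely the identification $\mathrm{span}\{T_n\}_{n=0}^N=\mathrm{span}\{P_n^{(-1/2,-1/2)}\}_{n=0}^N$ and $\mathrm{span}\{C_n^{(\lambda)}\}_{n=0}^N=\mathrm{span}\{P_n^{(\lambda-1/2,\lambda-1/2)}\}_{n=0}^N$, which makes the Chebyshev and Gegenbauer projections coincide with the Jacobi projections at $\alpha=\beta=-\tfrac12$ and $\alpha=\beta=\lambda-\tfrac12$, after which Theorem 5 is specialized exactly as you do. Your exponent and hypothesis translations all check out, so the write-up is a faithful (and somewhat more explicit) version of the paper's argument.
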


Numerical results for these estimates on the  boundary or an interior
point are illustrated in {\sc Fig.} \ref{fig:figs8} and {\sc Fig.} \ref{fig:figs9}, which indicates the optimal orders of the estimates .

\begin{figure}[hpbt]
\centerline{\includegraphics[height=5cm,width=16cm]{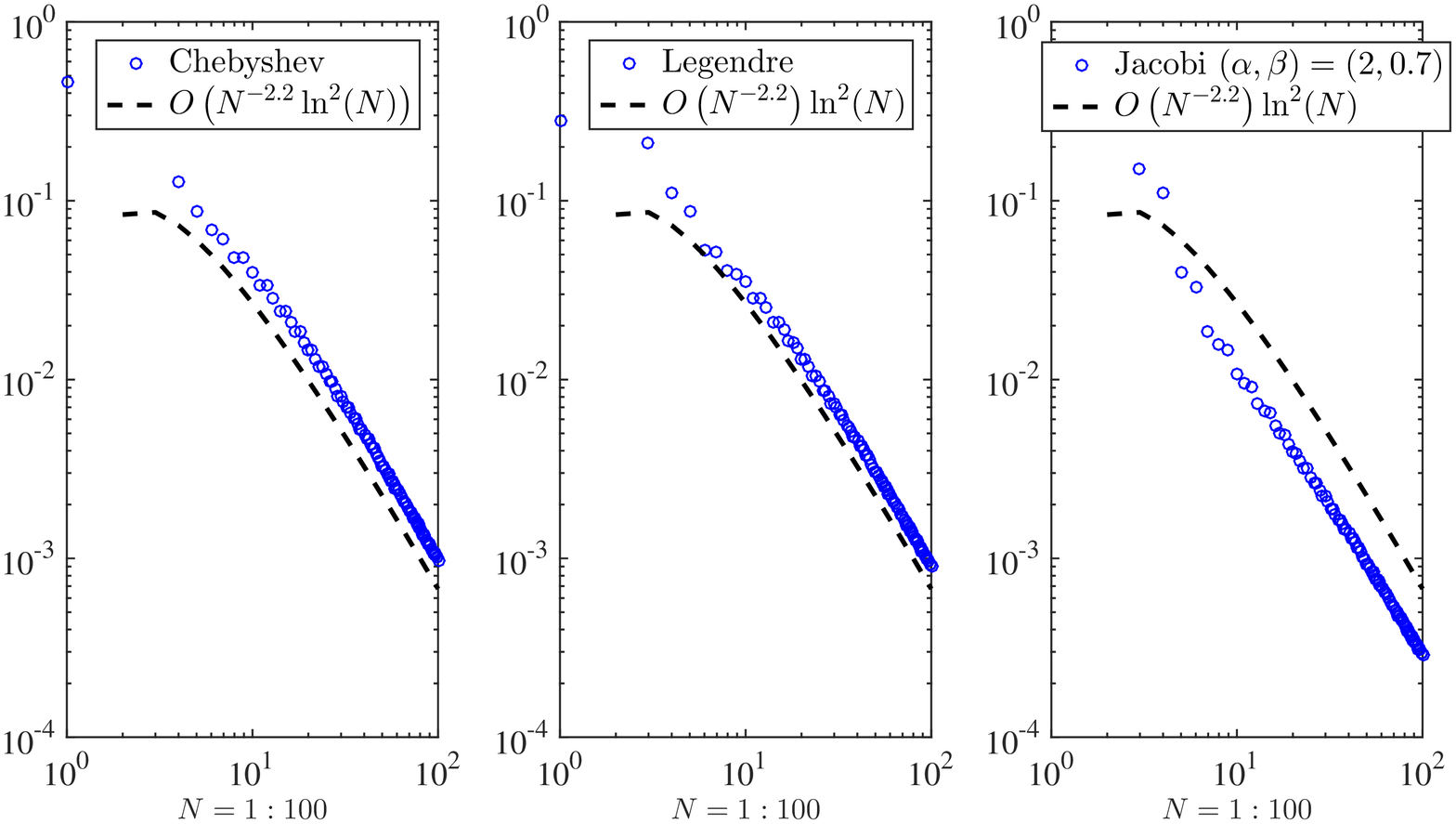}}
\caption{The weighted norm errors of the truncated Chebyshev, Legendre and Jacobi  expansions $\|f-\mathcal{P}_{n}^{f}\|_{H^{m,\alpha,\beta}(\Omega)}$ for $f(x)=|x-\frac{1}{2}|^{2.7}\ln^2|x-\frac{1}{2}|$, respectively.}\label{fig:figs8}
\end{figure}

\begin{figure}[hpbt]
\centerline{\includegraphics[height=6cm,width=16cm]{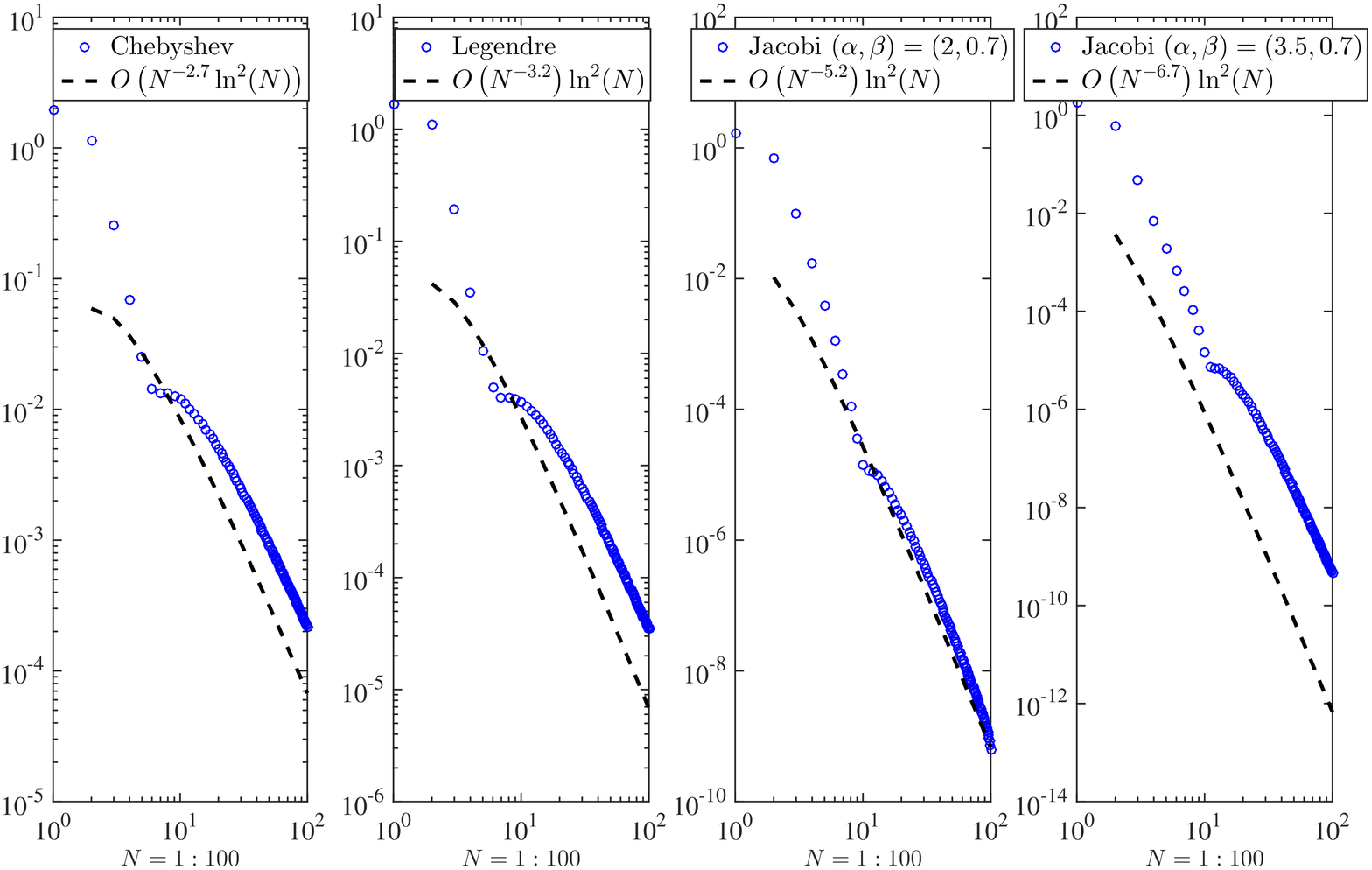}}
      \caption{The weighted norm errors of the truncated Chebyshev, Legendre and Jacobi ) expansions $\|f-\mathcal{P}_{n}^{f}\|_{H^{m,\alpha,\beta}(\Omega)}$ with $m=1$ for $f(x)=(1-x)^{1.6}\ln^2(1-x)$.}\label{fig:figs9}
\end{figure}

{\sc Remark 8}. In the case $\mu=0$ and $s$ even, all the three type projections $\|f-\mathcal{P}_{N}^{f}\|_{H^{m,\alpha,\beta}(\Omega)}$ are exponentially decayed \cite{Bernstein,Trefethen,Wang1,Wang2,Xiang2012,XWZ,ZWX}.

\section{Final remarks}
Applying the technique of the separation of singularities, the above results can be extended to the general functions with interior and boundary singularities for
 \begin{equation}\label{gen}{\displaystyle
f(x)=g(x)\prod_{i=1}^m|x-x_i|^{\gamma_i}\ln^{\mu}|x-x_i|=\sum_{i=1}^m|x-x_i|^{\gamma_i}\ln^{\mu}|x-x_i|g_i(x)}
\end{equation}
where $-1=x_m<x_{m-1}<\cdots<x_{2}<x_1=1$, $g_i\in C^{\infty}[-1,1]$, $\gamma_i\ge  0$ for $i=1,m$, $\gamma_i>0$  for $i=2,\ldots,m-1$ and $\mu$ is a positive integer \cite[pp. 219-220]{Tuan}.

\begin{corollary}\label{cor:cors2}
 Suppose $f(x)$ is defined by (\ref{gen}), then the coefficients in the Jacobi series of $f(x)$ satisfy
  \begin{equation}
   |a_{n}(\alpha,\beta)|=\left\{\begin{array}{l}
      {\cal O}\left(\min\left\{\ln^{\mu-1}(n)n^{-\min\left\{1+\alpha+2\gamma_0,1+\alpha+2\gamma_m\right\}},n^{-\min\left\{\gamma_2+\frac{1}{2},\ldots,\gamma_{m-1}+\frac{1}{2}\right\}}\ln^{\mu}(n)\right\}\right),\\
      \quad\quad\quad\gamma_{1},\gamma_{m}\in{\cal N}_0,\\
       {\cal O}\left(\min\left\{\ln^{\mu-1}(n)n^{-1-\alpha-2\gamma_1},n^{-\min\left\{1+\alpha+2\gamma_m,\gamma_2+\frac{1}{2},\ldots,\gamma_{m-1}+\frac{1}{2}\right\}}\ln^{\mu}(n)\right\}\right),\\
       \quad\quad\quad\gamma_{1}\in {\cal N}_0,\,\gamma_{m}\notin{\cal N}_0,\\
 {\cal O}\left(\min\left\{\ln^{\mu-1}(n)n^{-1-\beta-2\gamma_m},n^{-\min\left\{1+\alpha+2\gamma_1,\gamma_2+\frac{1}{2},\ldots,\gamma_{m-1}+\frac{1}{2}\right\}}\ln^{\mu}(n)\right\}\right),\\
 \quad\quad\quad\gamma_{1}\notin {\cal N}_0,\,\gamma_{m}\in{\cal N}_0,\\
      {\cal O}\left(n^{-\min\left\{1+\alpha+2\gamma_1,1+\beta+2\gamma_m,\gamma_2+\frac{1}{2},\ldots,\gamma_{m-1}+\frac{1}{2}\right\}}\ln^{\mu}(n)\right),\\
      \quad\quad\quad\gamma_{1},\gamma_{m}\notin{\cal N}_0
      \end{array}\right.
\end{equation}
\end{corollary}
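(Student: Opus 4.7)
The plan is to reduce this multi-singularity estimate to the three single-singularity estimates already established in Theorems 1--3. By the Tuan--Elliott separation of singularities recalled just above the corollary (and detailed in \cite[pp.~219--220]{Tuan}), we may write
$$f(x) = \sum_{i=1}^m |x-x_i|^{\gamma_i}\ln^{\mu}|x-x_i|\, g_i(x), \qquad g_i \in C^{\infty}[-1,1].$$
Since the Jacobi coefficient functional is linear,
$$a_n(\alpha,\beta) = \sum_{i=1}^m a_n^{(i)}(\alpha,\beta),$$
and the triangle inequality reduces matters to bounding each $a_n^{(i)}$ separately.

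For $i=1$ we have $|x-x_1|^{\gamma_1}=(1-x)^{\gamma_1}$, so Theorem 1 gives ${\cal O}(n^{-\alpha-2\gamma_1-1}\ln^{\mu}(n))$ when $\gamma_1\notin{\cal N}_0$, improved to ${\cal O}(n^{-\alpha-2\gamma_1-1}\ln^{\mu-1}(n))$ when $\gamma_1\in{\cal N}_0$. Symmetrically, Theorem 2 handles the $i=m$ endpoint with $\beta+2\gamma_m$ replacing $\alpha+2\gamma_1$ and the same dichotomy on the log exponent. For each interior index $i=2,\ldots,m-1$, Theorem 3 provides ${\cal O}(n^{-\gamma_i-1/2}\ln^{\mu}(n))$, independent of $(\alpha,\beta)$ and with the full log power $\mu$, since the assumption $\gamma_i>0$ at interior nodes keeps those factors genuinely singular.

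It then remains to combine these three families of bounds, and the four displayed cases of the corollary correspond exactly to the Boolean combinations of whether $\gamma_1\in{\cal N}_0$ and whether $\gamma_m\in{\cal N}_0$. Via $|a_n|\le \sum_i|a_n^{(i)}|$, the overall decay rate is dominated by the slowest-decaying summand; the outer $\min$ appearing in the statement reflects the fact that, when an endpoint exponent is a nonnegative integer, the corresponding endpoint contribution carries the improved $\ln^{\mu-1}$ factor and may be separated from the remaining contributions (which still carry $\ln^{\mu}$), so that the tighter of the two resulting grouped bounds is the one to report in that case. The exponent inside each inner $\min$ similarly records the slowest algebraic decay within a group of contributions.

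The only genuine analytical point is the existence of the smooth residuals $g_i$ on the whole interval $[-1,1]$ supplied by the separation of singularities; this is precisely \cite[pp.~219--220]{Tuan} and nothing here needs to be reproved. Beyond that, the argument is a direct application of Theorems 1--3 together with linearity of the expansion, and the main obstacle is not analytical but rather the bookkeeping of the four integer/non-integer cases for $(\gamma_1,\gamma_m)$ and the correct identification of which term within each group is slowest-decaying.
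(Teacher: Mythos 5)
Your decomposition-plus-triangle-inequality argument is exactly what the paper intends: Corollary~6 is stated without proof, immediately after recalling the Tuan--Elliott separation of singularities, and the single-singularity building blocks are Theorems~1--3 applied termwise, just as you use them. The reduction $a_n(\alpha,\beta)=\sum_i a_n^{(i)}(\alpha,\beta)$, Theorem~1 at $x_1=1$, Theorem~2 at $x_m=-1$ (correctly with $\beta+2\gamma_m$; the occurrences of $1+\alpha+2\gamma_m$ and of $\gamma_0$ in the displayed statement are typos), and Theorem~3 at the interior nodes with exponent $\gamma_i+\frac12$ are all sound, as is your case split on whether $\gamma_1,\gamma_m\in{\cal N}_0$.

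The one step that does not hold up is your reading of the outer $\min$. From $|a_n|\le\sum_i|a_n^{(i)}|$ you may conclude $|a_n|={\cal O}\bigl(\max_i B_n^{(i)}\bigr)$, where the $B_n^{(i)}$ are the individual bounds --- equivalently $n^{-\min\{\text{exponents}\}}$ carrying the largest relevant logarithmic power --- but you cannot ``separate the contributions into two groups and report the tighter of the two grouped bounds'': if $f=f_1+f_2$ and each $f_j$ satisfies only an upper bound, nothing prevents the slower-decaying piece from dominating the sum. The honest output of your (and the paper's) argument is the maximum of the endpoint and interior bounds, with $\ln^{\mu-1}$ attached only to the integer-exponent endpoint pieces; that is already what the inner $n^{-\min\{\cdots\}}$ encodes, and the outer $\min$ is an infelicity inherited from the statement (it appears in Corollary~1 as well), not something your argument establishes. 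Apart from this mis-stated combination step, the proposal is the paper's proof.
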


From (\ref{eq:gegcoeffs}) and (\ref{eq:checoeffs}), one may obtain sharp bounds for the Gegenbauer coefficients (\ref{eq:gegexpan}) and Chebyshev  coefficients (\ref{eq:chebexpan}), 
and optimal convergence rates 
for suitably differentiable functions in a similar way.

\vspace{0.36cm}\noindent {\bf Acknowledgments:}  
The author
 thanks Prof. Li-Lian Wang,  Dr. Guidong Liu, Desong Kong,  Qingyang Zhang and Jiangli Liang for  many
constructive discussion and comments that helped to improve the presentation of
this paper.


\begin{thebibliography}{10}

\bibitem{Abram}
{\sc M. Abramowitz and I.~A. Stegun}, \emph{Handbook of Mathematical
Functions}, National Bureau of Standards, Washington, D.C., 1964.

%

\bibitem{BaGuo1}
{\sc I.~ Babuska and B.~Guo},  \emph{Optimal estimates for lower and upper bounds of approximation
errors in the p-version of the finite element method in two dimensions}, Numer. Math. 85
(2000), pp. 219-255.

\bibitem{BaGuo2}
{\sc I. Babuska and B. Guo}, \emph{Direct and inverse approximation theorems for the p-version of
the finite element method in the framework of weighted Besov spaces. I. Approximability of
functions in the weighted Besov spaces}, SIAM J. Numer. Anal. 39 (2001/02), pp. 1512-
1538.

\bibitem{BaGuo3}
{\sc I. Babuska and B. Guo}, \emph{Direct and inverse approximation theorems for the p-version of
the finite element method in the framework of weighted Besov spaces. II. Optimal rate of
convergence of the p-version finite element solutions}, Math. Models Methods Appl. Sci. 12
(2002), pp. 689-719.


\bibitem{Bernstein}
{\sc S. Bernstein}, \emph{Sur l'order de la meilleure approximation
des fonctions continues par des polynomes de degr\'{e} donn\'{e}},
Memories de l'. Academie Royale de Belgique, 4(1912), pp.
1e103.
%
%
%
\bibitem{CCS}
{\sc P. Castillo, B. Cockburn, D. Sch\"{o}tzau and C. Schwab}, \emph{Optimal a priori error estimates for
the hp-version of the local discontinuous Galerkin method for convection-diffusion problems},
Math. Comp. 71 (2002), pp. 238, 455-478.


\bibitem{Darboux}
{\sc G.~Darboux}, \emph{M\'{e}moire sur l'approximation des fonctions de tr\`{e}s-grands nombres et sur une classe \'{e}tendue de d\'{e}veloppements en s\'{e}rie}, J. Math. Purer Appl. 4(1978), pp. 5-56, 377-416.


%
%
%
%
%
%
%
%
%
%
%
%



%
%
\bibitem{GSW}
{\sc B.  Guo, J. Shen and L. Wang}, \emph{Optimal spectral-Galerkin methods using generalized
Jacobi polynomials}, J. Sci. Comput. 27 (2006), pp.  305-322.

\bibitem{GW}
{\sc B.  Guo and L. Wang},  \emph{Jacobi approximations in non-uniformly Jacobi-weighted
Sobolev spaces}, J. Appr. Theory 128 (2004), pp. 1-41.

%
%
%
%
\bibitem{Hesthaven}
{\sc J. Hesthaven, S. Gottlieb and D. Gottlieb}, \emph{Spectral
Methods for Time-Dependent Problems,} Cambridge University Press,
2007.

%
%
%

\bibitem{LW}
{\sc W. Liu and L. Wang}, \emph{Asymptotics of Generalized Gegenbauer functions of fractional degree},  J. Appr. Theory, 253(2020), 105378.


\bibitem{LWL}
{\sc W. Liu, L. Wang and H. Li},
\emph{Optimal error estimates for Chebyshev approximations of functions with limited regularity in fractional Sobolev-type spaces}, Math. Comp. 88(2019), pp.  2857-2895.


\bibitem{Mark}
{\sc V. Mackevi\^{c}ius}, \emph{Integral and Measure: From Rather Simple to Rather Complex},  John Wiley \& Sons, Inc, 2014.
%
%








\bibitem{STW}
{\sc J. Shen, T. Tang and L. Wang}, \emph{ Spectral Methods: Algorithms, Analysis and Applications}, Springer, Berlin, 2011



\bibitem{Szego}
{\sc G.  Szeg\"{o}}, \emph{Orthogonal Polynomial}, Academic
Mathematical Society, 1939.







%
\bibitem{Trefethen}
{\sc L.~N. Trefethen}, \emph{Approximation Theory and Approximation Practice}, SIAM,
Philadelphia, 2013.


\bibitem{Tuan}
P. D. Tuan and D. Elliott, {\em  Coefficients in series expansions for certain classes of functions},
Math. Comp. 26(1972), pp. 213-232.


%
%
%
\bibitem{Wang1}
{\sc H. Wang}, \emph{On the optimal estimates and comparison of Gegenbauer expansion coefficients}, SIAM, J. Numer. Anal. 34(2016), pp. 1557-1580.

\bibitem{Wang2}
{\sc H. Wang and S. Xiang}, \emph{On the convergence rates of Legendre approximation}, Math. Comput., 81(2012), pp. 861-877.



\bibitem{Watson}
{\sc G.N. Watson},  \emph{A Tretise on the Theory of Bessel Functions}, Cambridge University Press,  1922.

\bibitem{Xiang2007}
{\sc S. Xiang}, \emph{Numerical analysis of a fast integration methods for highly oscillatory functions}, BIT Numer. Anal. 47(2007), pp. 469-482.

\bibitem{Xiang2012}
{\sc S. Xiang}, \emph{On error bounds for orthogonal polynomial expansions and Gauss-Type Quadrature},  SIAM J. Numer. Anal. 50(2012), pp. 1240-1263.



\bibitem{Xiang2019}
{\sc S. Xiang}, \emph{On van der Corput-type lemmas for Bessel and Airy
transforms and applications}, J. Comput. Appl. Math. 351(2019), pp. 179-185.

\bibitem{Xiang2013}
{\sc S. Xiang and H. Brunner}, \emph{Efficient methods for Volterra integral equations with  highly oscillatory Bessel kernels}, BIT Numer. Anal. 53(2013), pp. 4241-4263.


\bibitem{XiangLiu}
{\sc S. Xiang and G. Liu},  \emph{Optimal decay rates on the asymptotics of orthogonal
polynomial expansions for functions of limited regularities}, Numer. Math. 145(2020), pp.117-148.


\bibitem{XWZ}
{\sc Z. Xie, L. Wang and X. Zhao}, \emph{On exponential convergence of Gegenbauer interpolation and spectral differentiation},
Math. Comp. 82 (2013), pp. 1017-1036.

\bibitem{ZWX}
{\sc X. Zhao, L. Wang and Z. Xie}, \emph{Sharp error bounds for Jacobi expansions and Gegenbauer-Gauss quadrature of analytic functions}, SIAM J. Numer. Anal.  251(2012), pp. 1443-1469.



\end{thebibliography}
\end{document}